\def\re{\mathbb{R}}
\def\N{\mathbb{N}}
\def\pd{\partial}
\def\ol{\overline}
\def\la{\lambda}
\def\disp{\displaystyle}
\def\({\left(}
\def\){\right)}
\def\pd{\partial}
\def\intO{\int_{\Omega}}
\def\intRN{\int_{\re^N}}
\def\w{\omega}
\numberwithin{equation}{section}
\newtheorem{theorem}{Theorem}[section]
\newtheorem{corollary}[theorem]{Corollary}
\newtheorem{lemma}[theorem]{Lemma}
\newtheorem{proposition}[theorem]{Proposition}
\newtheorem{remark}[theorem]{Remark}
\numberwithin{theorem}{section}
\begin{document}

\title[Finsler Hardy inequalities]{Finsler Hardy inequalities}
\author[A. Mercaldo]{A. Mercaldo$^1$}
\author[M. Sano]{M. Sano$^2$}
\author[F. Takahashi]{F. Takahashi$^3$}

\date{\today}
\setcounter{footnote}{1}
\footnotetext{
Universit\`a di Napoli Federico II, Dipartimento di Matematica e Applicazioni 
`R. Caccioppoli'',
Complesso Monte S. Angelo, via Cintia, 80126 Napoli, Italy;\\
e-mail: {\tt mercaldo@unina.it}}

\setcounter{footnote}{2}
\footnotetext{
Department of Mathematics, Tokyo Institute of Technology,
2-12-1 Ookayama, Meguro-ku, Tokyo 152-8551, Japan. \\
e-mail:{\tt megumisano0609@gmail.com}}

\setcounter{footnote}{3}
\footnotetext{
Department of Mathematics, Osaka City University,
3-3-138 Sugimoto, Sumiyoshi-ku, Osaka 558-8585, Japan. \\
e-mail:{\tt futoshi@sci.osaka-cu.ac.jp}}

\begin{abstract} 
In this paper we present a unified simple approach to anisotropic Hardy inequalities in various settings. 
We consider Hardy inequalities which involve a Finsler distance from a point or from the boundary of a domain. 
The sharpness and the non-attainability of the constants in the inequalities are also proved.

\medskip
\noindent
{\sl Key words: Hardy inequality, Finsler norm, Best constant}  
\rm 
\\[0.1cm]
{\sl 2010 Mathematics Subject Classification: 26D10, 26D15}  
\rm 
\end{abstract}
\maketitle

\bigskip

\section{Introduction}
\label{section:Introduction}
The interest in the so-called anisotropic problems arose from G. Wulff's work on crystal shapes and minimization of anisotropic surface tensions in 1901 
and it is becoming increasingly important in different contexts, as in the field of phase changes and phase of separation in multiphase materials (cf. \cite{BCS(book)}, \cite{Bellettini-Paolini}). 
This justifies the necessity to extend to anisotropic case many of the classical tools, which are useful in classical variational problems. 
In this paper we are interested in sharp anisotropic Hardy-type inequalities. 
The basic idea is to endow the space $\re^N$ with the distance obtained by a Finsler metric and to extend several Hardy-type inequalities in such a new geometrical context.

The classical Hardy inequality asserts that for any $p \ge 1$, $p \ne N$, if $\Omega$ is a domain of $\re^N (N \ge 2)$ containing the origin, 
then
\begin{equation}
\label{classical Hardy}
	\left | \frac{N-p}{p} \right|^p \int_{\Omega} \frac{|u|^p}{|x|^p} dx \le \int_{\Omega} \left| \nabla u \right|^p dx
\end{equation}
holds for $ u \in C_0^{\infty}(\Omega)$ if $1 \le p < N$ and for $u \in C_0^{\infty}(\Omega \setminus \{ 0 \})$ if $p > N$.
Here the constant $\left| \frac{N-p}{p} \right|^p$ is sharp and never attained when $p > 1$. 
The critical Hardy inequality corresponding to the case $p=N$ has also been studied (cf. \cite{BFT1}, \cite{BFT2}, \cite{Ioku-Ishiwata}, \cite{Sano-TF(EJDE)}, \cite{TF});
in this case, for example, if $\Omega$ is a ball having center at the origin and radius $R$, 
then $|x|^N$ appearing in \eqref{classical Hardy} is replaced by the Hardy potential of the type $|x|^N(\log \frac{R}{|x|})^N$. 

Several variants of the Hardy inequalities \eqref{classical Hardy} have been known.
Among these we recall the {\it geometric type Hardy inequality} which asserts that, if $1 < p < \infty$ and $\Omega$ is a convex, possibly unbounded domain in $\re^N$, 
then 
\begin{equation}
\label{geo_Hardy}
	\(\frac{p-1}{p}\)^p \intO \frac{|u|^p}{(d(x))^p} dx \le \intO |\nabla u|^p dx\,, \qquad u \in C_0^{\infty}(\Omega)
\end{equation}
where $d(x)={\rm dist}(x, \partial \Omega)$ denotes the usual distance function from the boundary of $\Omega$ and the constant $\left( \frac{p-1}{p} \right)^p$ is sharp. 
An improved version of \eqref{geo_Hardy} has been proved in \cite{BFT1} 
where the best constant is given for a larger class of domains which verify the geometric assumption that $d$ is $p$-superharmonic in $\Omega$, i.e.,
\begin{equation}
\label{delta d}
	-\Delta_p d \ge 0
\end{equation}
in the distribution sense.
Here $\Delta_p$ is the $p$-Laplace operator $\Delta_p u = {\rm div}(|\nabla u|^{p-2} \nabla u)$. 

Anisotropic Hardy inequalities are also known. 
For example \eqref{classical Hardy} and \eqref{geo_Hardy} have been extended to the case where the Euclidean norm is replaced by a Finsler norm in \cite{Van Schaftingen}, \cite{Della Pietra-Blasio-Gavitone} 
when $p=2$, and \cite{Bal}, \cite{BGM}, \cite{Brasco-Franzina} when $p \ne 2$, respectively.
The method in \cite{Brasco-Franzina} and \cite{Bal} is to use Picone type identities in Finsler setting.

In this paper further anisotropic sharp Hardy inequalities will be proved.
We consider a Finsler norm $H$ and its polar function $H^0$, whose definitions are given in \S \ref{section:Notation}. 
Our first main result gives the following {\sl sharp anisotropic subcritical Hardy inequality} which is a consequence of Theorem \ref{Thm:H_p} in \S \ref{section:Hardy} 
and Theorem \ref{Thm:H_p attainability} in \S \ref{section:attainability}.

\begin{theorem}{\rm (Sharp anisotropic subcritical Hardy inequality)}
Assume  $1 \le p < N$ or  $ p> N$. Let $\Omega$ be a domain in $\re^N$. Then the following inequality 
\begin{equation}
\label{FH_p_intro}
	\left| \frac{N-p}{p} \right|^p \intO \frac{|u|^p}{\( H^0(x) \)^p} dx \le \intO \left| \frac{x}{H^0(x)} \cdot \nabla u \right|^p dx
\end{equation}
holds true for any $u \in C_0^{\infty}(\Omega)$ if $1 \le p < N$, 
and for any $u \in C_0^{\infty}(\Omega \setminus \{ 0 \}$ if $p > N$. 
Moreover if $0 \in \Omega$, 
the constant $\( \frac{N-p}{p} \)^p$ is sharp and not attained if $1 < p < N$ and the constant $N-1$ is attained for any nonnegative $H^0$-radially decreasing, compactly supported function when $p=1$.
\end{theorem}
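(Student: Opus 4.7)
The plan is to derive \eqref{FH_p_intro} from a pointwise divergence identity combined with integration by parts and H\"older's inequality, exploiting the $1$-homogeneity of the Finsler polar $H^0$. Since $H^0$ is positively $1$-homogeneous, Euler's identity gives $x \cdot \nabla H^0(x) = H^0(x)$ a.e., and hence the radial vector field $F(x) := x/(H^0(x))^p$ satisfies
\begin{equation*}
	{\rm div}\, F(x) = \frac{N}{(H^0(x))^p} - p\,\frac{x \cdot \nabla H^0(x)}{(H^0(x))^{p+1}} = \frac{N-p}{(H^0(x))^p}
\end{equation*}
on $\re^N \setminus \{ 0 \}$.

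Next, I would multiply by $|u|^p$ for $u \in C_0^\infty(\Omega)$ (or $C_0^\infty(\Omega \setminus \{0\})$ when $p>N$) and integrate by parts over $\Omega \setminus B_\eps(0)$. The boundary contribution on $\partial \Omega$ vanishes by the compact support of $u$; on $\partial B_\eps(0)$ it is controlled by $\eps^{N-p}\|u\|_\infty^p$, which tends to $0$ as $\eps \to 0$ when $1 \le p < N$. In the case $p > N$ the support of $u$ is already bounded away from the origin, so no truncation is needed. This produces the identity
\begin{equation*}
	(N-p) \intO \frac{|u|^p}{(H^0(x))^p}\,dx = -p \intO \frac{x \cdot \nabla u}{(H^0(x))^p}\,|u|^{p-2} u \,dx.
\end{equation*}
Taking absolute values, splitting the right-hand integrand as $\left|\frac{x}{H^0(x)} \cdot \nabla u\right| \cdot \frac{|u|^{p-1}}{(H^0(x))^{p-1}}$, and applying H\"older's inequality with exponents $p$ and $p/(p-1)$, one obtains \eqref{FH_p_intro} after rearrangement whenever $p > 1$. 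For $p = 1$ no H\"older step is required; moreover, when $u \ge 0$ is $H^0$-radially decreasing, i.e.\ $u(x) = f(H^0(x))$ for some nonincreasing $f$, Euler's identity gives $\frac{x}{H^0(x)} \cdot \nabla u = f'(H^0(x))$, which has constant sign, so the absolute-value step degenerates to an equality and the constant $N-1$ is attained.

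For the sharpness of $((N-p)/p)^p$ when $0 \in \Omega$ and $1 < p < N$, the plan is to test \eqref{FH_p_intro} with a family $\{u_\delta\}$ obtained by smoothly truncating the formal extremal $(H^0(x))^{-(N-p)/p}$ on an $H^0$-annulus $\{\delta < H^0(x) < 1/\delta\}$. Using the anisotropic coarea formula associated with the level sets of $H^0$, both sides diverge logarithmically at the same rate and their ratio tends to $((N-p)/p)^p$ as $\delta \to 0$. Non-attainment is then argued from the equality cases of the derivation: equality in H\"older would force $|u|/H^0$ to be proportional to $|x \cdot \nabla u|/H^0$, and combined with the divergence identity this determines $|u|$ up to a constant as $(H^0)^{-(N-p)/p}$, a function incompatible with membership in $C_0^\infty(\Omega)$.

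The main technical obstacle I foresee is the careful justification of the integration by parts at the origin, given the limited regularity of $H^0$ there ($H^0$ is Lipschitz but not $C^1$ at $0$), together with the coarea / change-of-variable step required for the sharpness calculation; both rely on the convexity of the unit ball $\{H^0 \le 1\}$ and the smoothness and positivity of $H^0$ away from the origin.
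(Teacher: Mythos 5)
Your derivation of the inequality itself coincides with the paper's proof of Theorem \ref{Thm:H_p}: the same vector field $F(x)=x/(H^0(x))^p$, the same use of Euler's identity $x\cdot\nabla H^0(x)=H^0(x)$ (Proposition \ref{Prop:identities} (1')), integration by parts, and H\"older. Your sharpness argument is a harmless variant of the paper's: you truncate the virtual extremal $(H^0)^{-(N-p)/p}$ logarithmically on an $H^0$-annulus, while the paper uses the family $(H^0(x))^{-\alpha}$ with $\alpha\nearrow\frac{N-p}{p}$ supported in a fixed small Wulff ball; both work, but note that your outer radius $1/\delta$ must be replaced by a fixed small radius so that the test functions stay in $C_0^\infty(\Omega)$ for bounded $\Omega$ (the logarithmic divergence comes entirely from the inner truncation). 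The $p=1$ attainment observation is also the paper's.

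The genuine gap is in the non-attainment claim for $1<p<N$. The quantity being shown non-attained is the infimum of the Rayleigh quotient over $W_0^{1,p}(\Omega)$ (see \eqref{H_p}), so a putative minimizer is only a Sobolev function: the integration-by-parts identity you derived for smooth compactly supported $u$ (in particular the vanishing of the boundary contributions) and the ``equality in H\"older'' analysis are not available for it without substantial further work. The paper closes exactly this gap with three auxiliary results and a different equality mechanism: (i) after zero extension to $\re^N$, a symmetrization step (Proposition \ref{Prop:spherical}) produces an $H^0$-radial minimizer $u(x)=U(H^0(x))$; (ii) a regularity lemma (Lemma \ref{lem:regularity}) gives $U\in C^1(0,\infty)$ via the radial Euler--Lagrange equation; (iii) a pointwise estimate (Proposition \ref{Prop:radial lemma}) gives $r^{(N-p)/p}|U(r)|\to 0$ as $r\to\infty$, which is what actually kills the boundary term in the limit $R\to\infty$. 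Finally, instead of H\"older equality, the paper substitutes $v=(H^0)^{\frac{N-p}{p}}u$ and uses the pointwise convexity inequality $|a+b|^p\ge|a|^p+p|a|^{p-2}ab$ with equality iff $b=0$; this forces $v$ to be constant, hence $u=c\,(H^0)^{-\frac{N-p}{p}}\notin W_0^{1,p}(\re^N)$, a contradiction. As written, your sketch only rules out extremals that are smooth enough for your identity to apply; to prove the stated non-attainment you would need to supply some version of (i)--(iii), or otherwise justify that a $W_0^{1,p}$ minimizer has enough regularity and decay for the equality analysis.
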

For the notion of $H^0$-radially decreasing function, see \S \ref{section:attainability}.

The critical case $p=N$ is also studied and the following result is a consequence of Theorem \ref{Thm:H_N} and Theorem \ref{Thm:H_N attainability}.
\begin{theorem}{\rm (Sharp anisotropic critical Hardy inequality)}
Let $\Omega \subset \re^N$, $N \ge 2$, be a bounded domain containing the origin and put $R = \sup_{x \in \Omega} H^0(x)$.
Then the inequality
\begin{equation}\label{FH_N_intro}
	\( \dfrac{N-1}{N} \)^N \intO \frac{|u|^N}{(H^0(x))^N (\log \frac{R}{H^0(x)})^N} dx
	\le \intO \left| \frac{x}{H^0(x)} \cdot \nabla u \right|^N dx
\end{equation}
holds for any $u \in C_0^{\infty}(\Omega)$.  
Moreover the constant $\( \frac{N-1}{N} \)^N$ is sharp and not attained. 
\end{theorem}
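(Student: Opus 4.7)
The plan is to establish inequality \eqref{FH_N_intro} by the divergence-theorem (vector-field) method, with the central computation driven by Euler's identity $x\cdot\nabla H^0(x)=H^0(x)$, which holds because $H^0$ is positively $1$-homogeneous. I would introduce the vector field
$$V(x)=-\frac{1}{N-1}\,\frac{x}{(H^0(x))^N\bigl(\log(R/H^0(x))\bigr)^{N-1}},$$
smooth on $\Omega\setminus\{0\}$. Writing $V(x)=-\tfrac{1}{N-1}\,x\,g(H^0(x))$ with $g(t)=t^{-N}(\log(R/t))^{-(N-1)}$, and using $\operatorname{div}(x)=N$, the chain rule, and Euler's identity, a short computation yields $Ng(t)+tg'(t)=(N-1)t^{-N}(\log(R/t))^{-N}$, so that
$$-\operatorname{div} V(x)=\frac{1}{(H^0(x))^N\bigl(\log(R/H^0(x))\bigr)^N}.$$

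Given $u\in C_0^\infty(\Omega)$, I multiply by $|u|^N$, integrate over $\Omega\setminus\{H^0<\rho\}$, integrate by parts using $\nabla|u|^N=N|u|^{N-2}u\,\nabla u$, and let $\rho\downarrow 0$; the boundary contribution on $\{H^0=\rho\}$ vanishes because $u$ is bounded and $|V|$ has the correct anisotropic radial rate. This yields
$$\int_\Omega\frac{|u|^N\,dx}{(H^0)^N(\log(R/H^0))^N}=-\frac{N}{N-1}\int_\Omega\frac{|u|^{N-2}u}{(H^0)^{N-1}(\log(R/H^0))^{N-1}}\cdot\frac{x\cdot\nabla u}{H^0}\,dx.$$
Denoting the two sides of \eqref{FH_N_intro} by $I$ and $J$, taking absolute values, and applying H\"older's inequality with conjugate exponents $N/(N-1)$ and $N$ produces $I\le\tfrac{N}{N-1}I^{(N-1)/N}J^{1/N}$, which rearranges to $((N-1)/N)^N\,I\le J$.

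For sharpness I would pass to $H^0$-polar coordinates. Since $H^0$ is $1$-homogeneous, each Wulff ball $\{H^0<t\}$ has volume $\kappa_H t^N$ with $\kappa_H=|\{H^0<1\}|$, whence $\int_{\{H^0<r\}}f(H^0)\,dx=N\kappa_H\int_0^r f(t)t^{N-1}dt$. For $H^0$-radial $u(x)=\phi(H^0(x))$ one has $(x/H^0)\cdot\nabla u=\phi'(H^0)$, so both $I[u]$ and $J[u]$ collapse to one-dimensional integrals; with $\phi(t)=(\log(R/t))^\alpha$ they share the common factor $N\kappa_H\int(\log(R/t))^{(\alpha-1)N}\,dt/t$, so their ratio is exactly $\alpha^N$ whenever both are finite. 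Taking $\alpha=(N-1)/N+\epsilon$ and restricting to a Wulff annulus $\{\delta<H^0<r_0\}\subset\Omega$ mollified into $C_0^\infty(\Omega)$, a direct computation gives $J[u_\epsilon]/I[u_\epsilon]\to((N-1)/N+\epsilon)^N$ as $\delta\downarrow 0$, which then tends to $((N-1)/N)^N$ as $\epsilon\downarrow 0$.

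Non-attainability follows from the strictness of H\"older's inequality: equality would force $|u|^{N-1}/[(H^0)^{N-1}(\log(R/H^0))^{N-1}]$ to be pointwise proportional to $|(x/H^0)\cdot\nabla u|^{N-1}$ and, combined with the sign constraint from the integration by parts, force $u$ to be proportional to $(\log(R/H^0))^{(N-1)/N}$ along every $H^0$-ray; this candidate extremal makes $I=+\infty$ and is not in $C_0^\infty(\Omega)$, ruling out equality. I expect the main obstacle to be the sharpness step: the cut-off and mollification near $H^0=\delta$ contribute error terms which must be shown to be of lower order than the logarithmically diverging main terms in the $\delta\downarrow 0$ limit, and this requires careful bookkeeping of the weights in polar coordinates.
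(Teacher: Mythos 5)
Your proof of the inequality itself coincides with the paper's: your field $V$ is $-\tfrac{1}{N-1}G$ where $G(x)=x\,(H^0(x))^{-N}(\log\tfrac{R}{H^0(x)})^{-(N-1)}$ is exactly the field used in Theorem \ref{Thm:H_N}, and the divergence computation via $x\cdot\nabla H^0=H^0$, the integration by parts, and the H\"older step are identical. The non-attainability route is genuinely different in presentation --- you analyse the equality case of H\"older directly, whereas the paper first extends a putative minimizer by zero to $\mathcal{W}_R$, replaces it by an $H^0$-radial one via Proposition \ref{Prop:spherical}, proves $C^1$ regularity of the profile, and uses the convexity inequality $|a+b|^N\ge|a|^N+N|a|^{N-2}ab$ together with a boundary-term computation --- but both roads lead to the same inadmissible extremal $c\,(\log\tfrac{R}{H^0})^{(N-1)/N}$; just note that the relevant conclusion is that it fails to lie in $W^{1,N}_0(\Omega)$ (the completion over which the infimum is taken), and that the identity, the sign constraint, and the ray-wise ODE must be justified for a general $W^{1,N}_0$ minimizer, which is what the paper's symmetrization and regularity detour accomplishes.

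The genuine gap is in the sharpness step. You take $\phi(t)=(\log\tfrac{R}{t})^{\alpha}$ with $\alpha=\tfrac{N-1}{N}+\epsilon$ \emph{above} the critical exponent and truncate inside $\{H^0<\delta\}$. The shared main term is $N\kappa_H\int_\delta^{r_0}(\log\tfrac{R}{t})^{(\alpha-1)N}\tfrac{dt}{t}=\tfrac{1}{\epsilon N}(\log\tfrac{R}{\delta})^{\epsilon N}+O(1)$, which diverges only like $(\log\tfrac{R}{\delta})^{\epsilon N}$ as $\delta\downarrow0$. A cutoff linear in $H^0$ on, say, $\delta<H^0<2\delta$ produces a radial derivative of size $(\log\tfrac{R}{\delta})^{\alpha}\delta^{-1}$ on a set of volume $\sim\delta^N$, hence contributes $\approx(\log\tfrac{R}{\delta})^{\alpha N}=(\log\tfrac{R}{\delta})^{N-1+\epsilon N}$ to the numerator $J$ --- \emph{larger} than the main term by the factor $(\log\tfrac{R}{\delta})^{N-1}$, so the quotient $J/I$ blows up instead of tending to $\alpha^N$. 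Even a cutoff linear in $s=\log\tfrac{R}{H^0}$ over $\tfrac12\log\tfrac{R}{\delta}<s<\log\tfrac{R}{\delta}$ only brings the error down to $O((\log\tfrac{R}{\delta})^{\epsilon N})$, i.e.\ the \emph{same} order as the main term, so it is not of lower order in the $\delta\downarrow0$ limit and can only be absorbed after the subsequent limit $\epsilon\downarrow0$ with explicit tracking of constants. The paper sidesteps this entirely by taking $\alpha<\tfrac{N-1}{N}$: then $(\log\tfrac{R}{H^0})^{\alpha}$ already has $L^N$ gradient near the origin, no inner truncation is needed, the outer cutoff contributes only $C(\delta)=O(1)$, and both integrals blow up like $(\tfrac{N-1}{N}-\alpha)^{-1}$ in a single limit $\alpha\nearrow\tfrac{N-1}{N}$, giving $H_N(\Omega)\le\alpha^N+o(1)$ cleanly. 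You should either switch to the subcritical exponent as in the paper or redo the cutoff bookkeeping on the logarithmic scale with the order of limits made explicit.
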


In \S \ref{section:geo Hardy} an {\sl anisotropic Hardy inequality of geometric type} is proved, 
while the attainability of the best constant is also studied in \S \ref{section:attainability}.
\begin{theorem}{\rm (Anisotropic Hardy inequality of geometric type)}
Let $1 < p < \infty$ and suppose $\delta = \delta(x)$ is a nonnegative, $\Delta_{H,p}$-superharmonic function on $\Omega$, i.e.,
\begin{equation}
\label{delta_p d}
	-\Delta_{H,p} \delta \ge 0
\end{equation}
in the distributional sense, where
\[
	\Delta_{H,p} \delta(x) = {\rm div} \( H^{p-1}(\nabla \delta(x))(\nabla H)(\nabla \delta(x)) \)
\]
denotes the Finsler $p$-Laplacian of $\delta$.
Then the inequality
\begin{equation}
\label{eq:geometric Finsler Hardy}
	\( \frac{p-1}{p} \)^p \intO \frac{|u|^p}{\delta^p} H^p(\nabla \delta) dx \le \intO |\nabla u \cdot (\nabla H)(\nabla \delta)|^p dx
\end{equation}
holds true for any $u \in C_0^{\infty}(\Omega)$.
\end{theorem}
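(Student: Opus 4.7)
The plan is to adapt to the anisotropic setting the classical vector-field proof of the Euclidean geometric Hardy inequality \eqref{geo_Hardy}. The natural Finsler analogue of the Euclidean field $-\nabla d/d^{p-1}$ is
\[
	V(x) := \frac{H^{p-1}(\nabla\delta)\,(\nabla H)(\nabla\delta)}{\delta^{p-1}},
\]
whose numerator is precisely the vector field whose divergence defines $\Delta_{H,p}\delta$. Testing the distributional superharmonicity \eqref{delta_p d} against the nonnegative, compactly supported Lipschitz function $\phi_{\eps} := |u|^p/(\delta+\eps)^{p-1}$ (with $\eps>0$ inserted to stay clear of the singularity on $\{\delta=0\}$) and applying the chain rule to $\nabla \phi_{\eps}$ produces
\[
	\intO H^{p-1}(\nabla\delta)(\nabla H)(\nabla\delta)\cdot\Bigl[\frac{p\,|u|^{p-2}u\,\nabla u}{(\delta+\eps)^{p-1}}-(p-1)\,\frac{|u|^p\,\nabla\delta}{(\delta+\eps)^{p}}\Bigr]\,dx\ge 0.
\]
The decisive algebraic input is Euler's homogeneity identity $(\nabla H)(\xi)\cdot\xi=H(\xi)$, which holds because $H$ is positively $1$-homogeneous; it collapses the ``diagonal'' term into $(p-1)\,H^p(\nabla\delta)\,|u|^p/(\delta+\eps)^p$.

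After rearranging I obtain the intermediate estimate
\[
	(p-1)\intO\frac{H^p(\nabla\delta)\,|u|^p}{(\delta+\eps)^p}\,dx\le p\intO\frac{H^{p-1}(\nabla\delta)\,(\nabla H)(\nabla\delta)\cdot\nabla u}{(\delta+\eps)^{p-1}}\,|u|^{p-2}u\,dx,
\]
and would then take absolute values on the right and apply H\"older's inequality with conjugate exponents $p/(p-1)$ and $p$, grouping the integrand as the product $\bigl[H(\nabla\delta)|u|/(\delta+\eps)\bigr]^{p-1}\cdot|(\nabla H)(\nabla\delta)\cdot\nabla u|$. Writing $A_{\eps}$ for the regularized left-hand integral and $B$ for the right-hand integral of \eqref{eq:geometric Finsler Hardy}, this yields $(p-1)A_{\eps}\le p\,A_{\eps}^{(p-1)/p}B^{1/p}$. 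Since $A_{\eps}$ is finite for each fixed $\eps>0$ when $u\in C_0^{\infty}(\Omega)$, dividing produces $A_{\eps}\le (p/(p-1))^p B$, after which monotone convergence as $\eps\downarrow 0$ delivers \eqref{eq:geometric Finsler Hardy}.

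The one delicate point is the admissibility of $\phi_{\eps}$ in the distributional statement \eqref{delta_p d}: one needs enough regularity on $\delta$ (for instance $\delta\in W^{1,p}_{\mathrm{loc}}$, which is implicit in taking $\eqref{delta_p d}$ as a meaningful distributional inequality with $L^{p/(p-1)}_{\mathrm{loc}}$ current) so that the pairing extends by density from $C_c^{\infty}$ to compactly supported Lipschitz test functions, and so that Euler's identity and the chain rule apply almost everywhere. Once this regularity is in hand, the remaining steps---Euler's relation, H\"older's inequality, and the passage $\eps\downarrow 0$---are mechanical, and no further obstruction appears.
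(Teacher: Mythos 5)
Your argument is correct and is essentially the paper's proof in its weak formulation: pairing $-\Delta_{H,p}\delta\ge 0$ with $|u|^p/(\delta+\eps)^{p-1}$ is exactly the integration by parts the paper performs after computing ${\rm div}\bigl(\delta^{1-p}H^{p-1}(\nabla\delta)(\nabla_\xi H)(\nabla\delta)\bigr)$, and both hinge on the same Euler identity $(\nabla_\xi H)(\nabla\delta)\cdot\nabla\delta=H(\nabla\delta)$ followed by the same H\"older step. Your $\eps$-regularization and the remark on admissible test functions are minor refinements of points the paper passes over silently.
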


Condition \eqref{delta_p d} will be discussed in \S \ref{section:geo Hardy}. 
Here we just remark that \eqref{delta_p d} coincides with \eqref{delta d} when we choose the Euclidean norm as the Finsler norm 
and $d$ as $\delta$ in \eqref{delta_p d}.

In Section \S \ref{section:Weighted} a weighted Finsler-Hardy-Poincar\'e inequality have been proved with respect to a weight $\rho$ which satisfies suitable assumptions. 

Our Hardy inequalities will be proved by using a simple unified approach valid for any choice of Hardy potential. 
A related approach has been adopted in \cite{BFT1}, \cite{BFT2}.

Finally we fix our attention on two anisotropic Hardy inequalities (\ref{FH_p_intro}) and (\ref{FH_N_intro}) which are quite different from each other in view of their forms, scaling structures and optimal constants. 
However, according to \cite{Sano-Takahashi}, we can reveal an unexpected relation between the critical and the subcritical anisotropic Hardy inequalities 
and show that the critical anisotropic Hardy inequality on a ball is embedded into a family of the subcritical anisotropic Hardy inequalities on the whole space 
by using a transformation which connects both inequalities.
In \S \ref{section:Relation} we show that the transformation conserves not only the best constants but also the scale invariance structures of both inequalities,
at least in the $H^0$-radial setting.

\vspace{1em}\noindent
{\it Note added to Proof.}

After completing this work, the authors of this paper are informed by Professor M. Ruzhansky of his recent seminal works on the Hardy, Rellich, and other functional inequalities on homogeneous groups
with arbitrary quasi-norms \cite{ORS(arXiv2016)}, \cite{RS(arXiv2016)}, \cite{RS(Adv.Math)}, \cite{RS(CCM)}, \cite{RSY(TAMS)}, \cite{RSY(IEOT)}.
In \cite{RS(Adv.Math)}, for example, the following $L^p$-Hardy inequality 
\[
	\left\| \frac{f}{|x|} \right\|_{L^p(\mathbb{G})} \le \frac{p}{Q-p} \| \mathcal{R} f \|_{L^p(\mathbb{G})}, \quad 1 < p < Q
\]
is proved on a homogeneous group $\mathbb{G}$ with the homogeneous dimension $Q$ and a homogeneous quasi-norm $| \cdot |$.
Here the operator $\mathcal{R} = \mathcal{R}_{| \cdot |} = \frac{d}{d|x|}$ is called a radial operator. 
Other problems such as the optimality of constants and the existence of remainder terms are also studied in the above and subsequent papers.
If $\mathbb{G}$ is chosen as an abelian group $(\re^N, +)$ and $| \cdot |$ as $H^0(\cdot)$, our Hardy inequality \eqref{FH_p_intro} is nothing but the above inequality
since $\mathcal{R} f = \frac{x}{H^0(x)} \cdot \nabla f$ in this situation.
Their proof is based on the polar coordinate decomposition
\[
	\int_{\mathbb{G}} f(x) dx = \int_0^{\infty} \int_{\mathcal{S}} f(r\omega) r^{Q-1} d\sigma(\omega) dr
\] 
for $f \in L^1(\mathbb{G})$ where $\mathcal{S} = \{ x \in \mathbb{G} \, : \, |x| = 1 \}$, and is different from ours in this paper, which depends basically on the use of divergence theorem.
In this sense, many results in the present paper can be seen as special cases of the results above with different proofs.  
We stick to the Finsler setting since we want to apply our inequalities to the nonlinear problems involving the Finsler Laplacian.
Also we believe that our method of proof will be useful in such possible applications.


\section{Notation and basic properties}
\label{section:Notation}

Let $H: \re^N \to \re$ be a nonnegative, convex function of class $C^2(\re^N \setminus \{ 0 \})$,
which is even and positively homogeneous of degree 1:
\begin{equation}
\label{pos_H}
	H(t \xi ) = |t| H(\xi), \quad \forall \xi \in \re^N, \, \forall t \in \re.
\end{equation}
The above assumptions give the existence of positive constants $\alpha$ and $\beta$ such that
\[
	\alpha |\xi| \le H(\xi) \le \beta |\xi|, \quad \xi \in \re^N.
\]
Let $K$ denote the convex closed set
\[
	K = \{ \xi \in \re^N \,: \, H(\xi) \le 1 \}.
\]
Sometimes we will say that $H$ is  {\it the gauge} of $K$.
The {\it polar function} of $H$  is the function $H^0: \re^N \to \re$ 
 defined by 
\[
	H^0(x) = \sup_{\xi \in \re^N \setminus \{ 0 \}} \frac{\xi \cdot x}{H(\xi)} = \sup_{\xi \in K} (\xi \cdot x)\,, \qquad x \in \re^N\,.
\]
Throughout this paper $\xi \cdot x = \sum_{j=1}^N \xi_j x_j$ denotes the usual inner product of $\re^N$.

\noindent Note that, by definition of $H^0$, the {\sl Schwarz inequality} holds true, i.e., 
\begin{equation}
\label{Schwarz}
	|\xi \cdot x| \le H(\xi) H^0(x), \quad \forall \xi, x \in \re^N.
\end{equation}
It is well-known that $H^0$ is a convex, positively homogeneous of degree 1, continuous function on $\re^N$, 
and the following inequality is satisfied
\[
	\frac{1}{\beta} |x| \le H^0(x) \le\frac{1}{\alpha} |x|, \quad \forall x \in \re^N.
\]
Also the following equality 
\[
	H(\xi) = (H^0)^0(\xi) = \sup_{x \in \re^N \setminus \{ 0 \}} \frac{x \cdot \xi}{H^0(x)}\,, \qquad \xi \in \re^N,
\]
holds and $H^0$ itself is the gauge of the closed convex set
\[
	K^0 = \{ x \in \re^N \,: \, H^0(x) \le 1 \}.
\]
We say that $K$ and $K_0$ are polar to each other. 
The interior set of $K^0$, i.e.,
\[
	\mathcal{W} = \{ x \in \re^N \,: \, H^0(x) < 1 \}
\]
is called the {\it Wulff ball}, or $H^0$-unit ball, and we denote $\kappa_N = \mathcal{H}^N(\mathcal{W})$. 
In this case, the {\it anisotropic $H$-perimeter} of $\mathcal{W}$, denoted by $P_H(\mathcal{W}; \re^N)$, is
$P_H(\mathcal{W}, \re^N) = N \kappa_N$.
For more explanation about the anisotropic perimeter, see \cite{AFTL} and {\cite{Belloni-Ferone-Kawohl}.
Throughout the paper, we denote
\[
	\w_{N-1} = P_H(\mathcal{W}; \re^N) = N \kappa_N.
\]
Denote 
$$
	\mathcal{W}_R = \{ x \in \re^N \,|\, H^0(x) <R \}
$$
for any $R>0$ and we identify $\mathcal{W}_\infty$ with $\re^N$.
A function  $H \in C^2 \(\re^N\setminus \{0\} \)$ is a {\it Finsler  norm} if it satisfies properties \eqref{pos_H}, 
and moreover $H$ is strongly convex in the sense that the Hesse matrix of $H^2$, ${\rm Hess}(H^2)$ is positive definite. 
For references about Finsler norms (or, more in general, for Finsler metrics) see \cite{BCS(book)}, \cite{Bellettini-Paolini}.

Here we just recall further properties, whose proofs are contained in \cite{Bellettini-Paolini} Lemma 2.1, 2.2, or \cite{Van Schaftingen} Proposition 6.2.
\begin{proposition}
\label{Prop:identities} Let $H$ be a Finsler norm on $\re^N$. 
Then the following properties hold true:
\begin{enumerate}
\item[\rm(1)] $\nabla_{\xi} H(\xi) \cdot \xi = H(\xi)$, \qquad $\xi \ne 0$.
\item[\rm(2)] $\(\nabla_{\xi} H \)(t \xi) = \frac{t}{|t|} \( \nabla_{\xi} H \)(\xi)$, \qquad $\xi \ne 0, t \ne 0$.
\item[\rm(3)] $\(\nabla^2_{\xi} H \)(t \xi) = \frac{1}{|t|} \( \nabla_{\xi} H \)(\xi)$, \qquad $\xi \ne 0, t \ne 0$.
\item[\rm(4)] $H\( \nabla H^0(x) \) = 1$.
\item[\rm(5)] $H^0(x) \( \nabla_{\xi} H \) \( \nabla_x H^0(x) \) = x$.
\end{enumerate}
Similarly, following properties also hold true:
\begin{enumerate}
\item[\rm(1')] $\nabla_{x} H^0(x) \cdot x = H^0(x)$, \qquad $x \ne 0$.
\item[\rm(2')] $\(\nabla_{x} H^0 \)(t x) = \frac{t}{|t|} \( \nabla_{x} H^0 \)(x)$, \qquad $x \ne 0, t \ne 0$.
\item[\rm(3')] $\(\nabla^2_{x} H^0 \)(t x) = \frac{1}{|t|} \( \nabla_{x} H^0 \)(x)$, \qquad $x \ne 0, t \ne 0$.
\item[\rm(4')] $H^0 \( \nabla_{\xi} H(\xi) \) = 1$.
\item[\rm(5')] $H(\xi) \( \nabla_{x} H^0 \) \( \nabla_{\xi} H(\xi) \) = \xi$.
\end{enumerate}
\end{proposition}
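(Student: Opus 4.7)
The plan is to deduce (1)--(3) from the positive $1$-homogeneity of $H$ via Euler's identity followed by successive differentiations, and to derive (4)--(5) from the variational characterization of $H^0$ as a Legendre-type transform of $H$. The primed versions (1')--(5') then follow by symmetry once we invoke $(H^0)^0 = H$, which makes $H^0$ itself a Finsler norm and puts it on equal footing with $H$.

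For (1), I would differentiate the identity $H(t\xi)=tH(\xi)$ (valid for $t>0$) in $t$ at $t=1$ to obtain Euler's relation $\nabla_{\xi} H(\xi)\cdot \xi = H(\xi)$. For (2), differentiating $H(t\xi)=|t|H(\xi)$ in $\xi$ yields $t\,(\nabla_{\xi} H)(t\xi)=|t|\,\nabla_{\xi} H(\xi)$, i.e., $(\nabla_{\xi} H)(t\xi)=\tfrac{t}{|t|}\nabla_{\xi} H(\xi)$. For (3), differentiating (2) once more in $\xi$ delivers the corresponding scaling identity for the Hessian.

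For (4)--(5), the key is the variational definition
\[
	H^0(x) = \sup_{H(\xi)=1} \xi\cdot x, \qquad x\neq 0.
\]
Strong convexity of $H$ (i.e., ${\rm Hess}(H^2)>0$) makes the unit sphere $\{H=1\}$ strictly convex and $C^2$-smooth, so the supremum is attained at a unique maximizer $\xi^{\ast}(x)$ depending smoothly on $x$. Lagrange multipliers yield $x=\lambda\,\nabla_{\xi} H(\xi^{\ast})$ for some $\lambda>0$; pairing with $\xi^{\ast}$ and applying (1) forces $\lambda = H^0(x)$, so that $x = H^0(x)\,\nabla_{\xi} H(\xi^{\ast})$. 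The envelope theorem applied to the smooth parametric supremum then gives $\nabla_{x} H^0(x) = \xi^{\ast}(x)$; (4) is immediate from $H(\xi^{\ast})=1$, and (5) follows by substituting $\xi^{\ast} = \nabla_{x} H^0(x)$ into the relation just obtained.

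The primed identities come for free by rerunning the same argument with the roles of $H$ and $H^0$ interchanged, using $(H^0)^0 = H$. The main obstacle I foresee is justifying the $C^1$-dependence of the maximizer $\xi^{\ast}$ on $x$, which is exactly where strong convexity enters: it guarantees that the Hessian of $H$ restricted to the tangent space of $\{H=1\}$ is nondegenerate, so the implicit function theorem applies to the system $x=\lambda\,\nabla_{\xi} H(\xi)$, $H(\xi)=1$ and yields the smoothness needed for the envelope step. Modulo this regularity point, the rest of the proof is a short calculation.
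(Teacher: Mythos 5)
Your proposal is correct, but note that the paper itself offers no proof of this proposition: it explicitly defers to Bellettini--Paolini (Lemmas 2.1, 2.2) and Van Schaftingen (Proposition 6.2), so there is no in-paper argument to compare against. Your route is the standard one used in those references: Euler's identity and its derivatives for (1)--(3), and the Lagrange-multiplier/envelope-theorem analysis of the constrained supremum $H^0(x)=\sup_{H(\xi)=1}\xi\cdot x$ for (4)--(5), with the key step $x\cdot\xi^{\ast}=H^0(x)=\lambda$ pinning down the multiplier. Two small points. First, the printed item (3) is a typo (the right-hand side should be $\frac{1}{|t|}\(\nabla^2_{\xi}H\)(\xi)$, the Hessian, not the gradient); your differentiation of (2) yields the corrected statement, which is the one actually intended. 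Second, deducing the primed identities ``by symmetry'' needs slightly more than the formal identity $(H^0)^0=H$: one must know that $H^0$ inherits $C^2$ regularity away from the origin and strong convexity from $H$, which is itself a duality statement (the map $\nabla\(\tfrac12 H^2\)$ is a diffeomorphism of $\re^N\setminus\{0\}$ whose inverse is $\nabla\(\tfrac12 (H^0)^2\)$, as your own item (5) combined with (5') encodes). This is standard and is exactly what the cited references establish, but it deserves a sentence rather than being absorbed into ``for free.'' Modulo these remarks, your argument is complete and is essentially the proof the paper points to.
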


Finally, given a smooth function $u$ on $\re^N$, {\it the Finsler Laplace operator} of $u$ (associated with $H$) is defined by
\begin{align*}
	\Delta_H u(x) &= {\rm div} \( H(\nabla u(x)) \( \nabla_{\xi} H \)(\nabla u(x)) \) \\
	&= \sum_{j=1}^N \frac{\pd}{\pd x_j} \( H(\xi) H_{\xi_j}(\xi) \Big|_{\xi = \nabla u(x)} \)
\end{align*}
and, more generally, for any $1<p<\infty$, {\it the Finsler $p$-Laplace operator} $\Delta_{H,p}$ by
\[
	\Delta_{H,p} u(x) = \mbox{div} \( H^{p-1}(\nabla u(x))(\nabla_{\xi} H)(\nabla u(x)) \).
\]
\vspace{1em}
Note that though the Finsler gradient vector 
\[
	\nabla_H u(x) = H(\nabla u(x)) \( \nabla_{\xi} H \)(\nabla u(x)) = \nabla_{\xi} \( \frac{1}{2} H^2(\xi) \) \Big|_{\xi = \nabla u(x)} 
\]
is a nonlinear operator, thanks to the strict convexity oh $H$, $\Delta_H$ and $\Delta_{H,p}$ is a uniformly elliptic operator locally.
The Finsler Laplacian has been widely investigated in literature and its notion goes back to the work of G. Wulff, who considered it to describe the theory of crystals. 
Many other authors developed the related theory in several settings, considering both analytic and geometric points of view, 
see (\cite{AIS}, \cite{Bellettini-Paolini}, \cite{Belloni-Ferone-Kawohl}, \cite{BCS}, \cite{Cianci-Salani}, \cite{Della Pietra-Blasio}, \cite{Ferone-Kawohl} and references therein).

\section{Hardy type inequalities}
\label{section:Hardy}

In this section, we prove several Finsler Hardy type inequalities in a unified method.
This simple approach is motivated by \cite{BFT1}, \cite{BFT2}, and \cite{TF}. 

\begin{theorem}{\rm (Sharp anisotropic subcritical Hardy inequality)}
\label{Thm:H_p}
Assume  $1 \le p < N$ or $ p> N$. 
Let $\Omega$ be a domain in $\re^N$. Then the following inequality holds true 
\begin{equation}
\label{FH_p}
	\left| \frac{N-p}{p} \right|^p \intO \frac{|u|^p}{\( H^0(x) \)^p} dx \le \intO \left| \frac{x}{H^0(x)} \cdot \nabla u \right|^p dx
\end{equation}
for any $u \in C_0^{\infty}(\Omega)$ if $1 \le p < N$, 
and for any $u \in C_0^{\infty}(\Omega \setminus \{ 0 \})$ if $p > N$.
\end{theorem}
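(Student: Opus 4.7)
The plan is to adapt the classical vector-field/divergence-theorem argument (the one used in BFT1, BFT2 and TF) to the Finsler setting. The key ingredient is the identity
\[
	\mbox{div}\!\left( \frac{x}{(H^0(x))^p} \right) = \frac{N-p}{(H^0(x))^p}, \qquad x \neq 0,
\]
which I would verify by a direct computation: writing $f(x)=(H^0(x))^{-p}$, one has $\mbox{div}(fx) = Nf + \nabla f\cdot x = N(H^0)^{-p} - p(H^0)^{-p-1} \nabla H^0 \cdot x$, and then Proposition \ref{Prop:identities}(1') gives $\nabla H^0(x)\cdot x = H^0(x)$, producing the stated identity. Note that the cancellation $N - p$ is exactly the same combinatorial mechanism as in the Euclidean case, but now $H^0$ plays the role of $|\cdot|$ thanks to the Euler-type identity (1').

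Next I would multiply the identity by $|u|^p$ and integrate over $\Omega$. The vector field $x/(H^0)^p$ is smooth on $\re^N\setminus\{0\}$ and of the order $|x|^{1-p}$ near the origin. When $1\le p<N$ this field is locally integrable, so $u\in C_0^\infty(\Omega)$ suffices; when $p>N$ the assumption $u\in C_0^\infty(\Omega\setminus\{0\})$ removes the singularity. In both cases integration by parts gives
\[
	(N-p) \intO \frac{|u|^p}{(H^0(x))^p} dx = -p \intO |u|^{p-2} u \, \nabla u \cdot \frac{x}{(H^0(x))^p} dx,
\]
which already handles the case $p=1$ (just take moduli on both sides and note $|u|^{p-2}u=\mbox{sgn}(u)$). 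For $1<p<N$ or $p>N$, take absolute values, rewrite the right-hand integrand as
\[
	\frac{|u|^{p-1}}{(H^0(x))^{p-1}} \cdot \left| \frac{x}{H^0(x)} \cdot \nabla u \right|,
\]
and apply H\"older's inequality with conjugate exponents $p/(p-1)$ and $p$. This yields
\[
	|N-p| \intO \frac{|u|^p}{(H^0)^p} dx \le p \left( \intO \frac{|u|^p}{(H^0)^p} dx \right)^{(p-1)/p} \left( \intO \left| \frac{x}{H^0} \cdot \nabla u \right|^p dx \right)^{1/p}.
\]
Dividing by the first factor on the right (assuming it is positive and finite) produces (\ref{FH_p}).

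The only delicate point is the legitimacy of that division, since a priori we do not know $\intO |u|^p/(H^0)^p dx$ is finite when $0\in\Omega$ and $1<p<N$. I would handle this by a standard truncation: replace $u$ by $u_\eps = u\, \chi_\eps$ where $\chi_\eps$ is a smooth cutoff vanishing in $\mathcal{W}_\eps$ and equal to $1$ outside $\mathcal{W}_{2\eps}$; for $u_\eps$ both sides are clearly finite, the inequality holds by the argument above, and one passes to the limit $\eps\downarrow 0$ using dominated convergence (noting $H^0$ is comparable to $|\cdot|$ and $|x|^{-p}$ is locally integrable when $p<N$). The main obstacle is really just this technical approximation and the careful control of the boundary/singularity terms in the integration by parts; the algebraic identity and the H\"older step are otherwise routine.
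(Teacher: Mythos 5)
Your proposal is correct and follows essentially the same route as the paper: the divergence identity $\mathrm{div}(x/(H^0)^p)=(N-p)/(H^0)^p$ via the Euler relation $\nabla H^0(x)\cdot x=H^0(x)$, integration by parts against $|u|^p$, and H\"older's inequality. The extra care you take about the finiteness of $\int_\Omega |u|^p/(H^0)^p\,dx$ and the singularity at the origin is sound (and in fact finiteness is automatic for $1\le p<N$ since $H^0(x)\ge \frac{1}{\beta}|x|$ and $|x|^{-p}$ is locally integrable), so no substantive difference from the paper's argument.
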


\begin{remark}\rm 
The anisotropic subcritical Hardy inequality (\ref{FH_p}) is invariant under the scaling $u_\la (x) = \la^{\frac{N-p}{p}} u(\la x)$, $(\la >0)$ when $\Omega =\re^N$. 
Indeed, by (\ref{pos_H}) we can easily check the following equalities
\begin{align*}
\intRN \frac{|u_\la (x) |^p}{\( H^0(x) \)^p} dx &= \intRN \frac{|u (y) |^p}{\( H^0(y) \)^p} dy, \\
\intRN \left| \frac{x}{H^0(x)} \cdot \nabla u_\la (x) \right|^p dx &= \intRN \left| \frac{y}{H^0(y)} \cdot \nabla u (y) \right|^p dy.
\end{align*}
\end{remark}

\begin{proof} 
We just prove the assertion when $1 \le p < N$, since the proof of the case where $p > N$ is similar.
Define
\[
	F(x) = \frac{x}{\( H^0(x) \)^{\la}}, \quad x \in \Omega.
\]
Then we have
\begin{align*}
	{\rm div} F(x) &= \frac{N}{\( H^0(x) \)^{\la}} + (-\la) \(H^0(x) \)^{-\la-1} \nabla_x H^0(x) \cdot x \\
	&= \frac{N-\la}{\( H^0(x) \)^{\la}},
\end{align*}
since $\nabla H^0(x) \cdot x = H^0(x)$ by Proposition \ref{Prop:identities} (1').
Take $\la = p$.
Then for any $u \in C_0^{\infty}(\Omega)$, 
we compute
\begin{align*}
	&\intO \frac{|u(x)|^p}{\( H^0(x) \)^p} dx 
	= \left| \frac{1}{N-p} \intO |u(x)|^p \mbox{div} F(x) dx \right| \\
	&= \left| - \frac{1}{N-p} \intO \nabla (|u|^p ) \cdot \frac{x}{\( H^0(x) \)^p} dx \right| \\
	&= \left| - \frac{p}{N-p} \intO |u|^{p-2} u \nabla u  \cdot \frac{x}{\( H^0(x) \)^p} dx \right| \\
	&\le \left| \frac{p}{N-p} \right| \( \intO \frac{|u|^p}{\( H^0(x) \)^p} dx \)^{(p-1)/p} \( \intO \left| \frac{x}{\( H^0(x) \)} \cdot \nabla u \right|^p dx \)^{1/p}.
\end{align*}
This yields the conclusion.
\end{proof}

\begin{remark}\rm 
Note that by \eqref{Schwarz} and the positively 1-homogeniety of $H^0$, 
the right-hand side of the inequality \eqref{FH_p} is estimated as
\[
	\intO \left| \frac{x}{H^0(x)} \cdot \nabla u \right|^p dx  \le \intO \( H( \nabla u(x)) \)^p dx.
\]
Thus Theorem \ref{Thm:H_p} improves the following inequality by Van Schaftingen (\cite{Van Schaftingen} Proposition 7.5), 
\[
	\left| \frac{N-p}{p} \right|^p \intO \frac{|u|^p}{\( H^0(x) \)^p} dx\le \intO \( H( \nabla u(x)) \)^p dx,
\]
which is obtained by the use of symmetrization.
\end{remark}
\bigskip
Next result concerns the critical case $p=N$.

\begin{theorem}{\rm (Sharp anisotropic critical Hardy inequality)}
\label{Thm:H_N}
Let $\Omega \subset \re^N$, $N \ge 2$, be a bounded domain and put $R = \sup_{x \in \Omega} H^0(x)$.
Then the inequality
\begin{equation}\label{FH_N}
	\( \dfrac{N-1}{N} \)^N \intO \frac{|u|^N}{(H^0(x))^N (\log \frac{R}{H^0(x)})^N} dx
	\le \intO \left| \frac{x}{H^0(x)} \cdot \nabla u \right|^N dx
\end{equation}
holds for any $u \in C_0^{\infty}(\Omega)$. 
\end{theorem}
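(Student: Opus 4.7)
The plan is to imitate the divergence-plus-H\"older strategy used in Theorem \ref{Thm:H_p}, but with the critical singularity at the origin absorbed into a logarithmic correction of the auxiliary vector field. Concretely, I would replace $x/(H^0(x))^p$ by
\[
	F(x) = \frac{x}{(H^0(x))^N \(\log\frac{R}{H^0(x)}\)^{N-1}}, \qquad x \in \Omega \setminus \{0\}.
\]
This is well defined on $\Omega \setminus \{0\}$ because $R = \sup_{x \in \Omega} H^0(x)$ is not attained on the support of $u$.

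First I would compute $\operatorname{div} F$ using property (1') of Proposition \ref{Prop:identities}, namely $\nabla H^0(x) \cdot x = H^0(x)$. A short differentiation shows that the two leading contributions of order $(\log\frac{R}{H^0})^{-(N-1)}$ cancel exactly, leaving
\[
	\operatorname{div} F(x) = \frac{N-1}{(H^0(x))^N \(\log\frac{R}{H^0(x)}\)^N}.
\]
This identity forces the exponent $N-1$ on the logarithm in $F$: it is the unique choice yielding the critical Hardy weight on the right.

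Next I would multiply by $|u|^N$ and integrate by parts on $\Omega \setminus \mathcal{W}_\eps$, then let $\eps \to 0$. The surface integral over $\pd \mathcal{W}_\eps$ satisfies $|F| = O\(\eps^{1-N}(\log\frac{R}{\eps})^{-(N-1)}\)$ and the surface measure of $\pd \mathcal{W}_\eps$ is $O(\eps^{N-1})$, so the boundary term is $O\((\log\frac{R}{\eps})^{-(N-1)}\) \to 0$. In the limit,
\[
	(N-1) \intO \frac{|u|^N}{(H^0)^N \(\log\frac{R}{H^0}\)^N}\,dx
	= -N \intO |u|^{N-2} u\, \nabla u \cdot F\, dx.
\]

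Finally I would factor the right-hand integrand as
\[
	\frac{|u|^{N-1}}{(H^0)^{N-1}\(\log\frac{R}{H^0}\)^{N-1}} \cdot \(\frac{x}{H^0} \cdot \nabla u\),
\]
apply H\"older's inequality with conjugate exponents $N/(N-1)$ and $N$, divide by the $(N-1)/N$ power of the Hardy integral, and take $N$-th powers to obtain (\ref{FH_N}). The only real obstacle is guessing the correct exponent on the logarithm in $F$; once this is done, the rest of the argument is structurally identical to the proof of Theorem \ref{Thm:H_p}.
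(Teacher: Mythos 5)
Your proposal is correct and follows essentially the same route as the paper: the authors use exactly the vector field $G(x) = x\,(H^0(x))^{-N}(\log\frac{R}{H^0(x)})^{-\la}$ with $\la = N-1$, obtain $\operatorname{div} G = (N-1)(H^0(x))^{-N}(\log\frac{R}{H^0(x)})^{-N}$ from the identity $\operatorname{div}\bigl(x/(H^0(x))^N\bigr)=0$, and conclude by integration by parts and H\"older exactly as you describe. The only difference is that you justify the vanishing of the boundary term on $\pd\mathcal{W}_\eps$ explicitly, a point the paper leaves implicit.
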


\begin{remark}\rm
The anisotropic critical Hardy inequality (\ref{FH_N}) is invariant under the scaling $u_\la (x) = \la^{-\frac{N-1}{N}} u\( \( \frac{H^0(x)}{R}\)^{\la -1} x \)$, $(\la >0)$ 
when $\Omega =\mathcal{W}_R = \{ x \in \re^N \,: \, H^0(x) < R \}$. 
For the proof, see \S \ref{section:scale FH_N}.
\end{remark}

\begin{proof}
Define
\[
	G(x) = \frac{x}{\( H^0(x) \)^N \( \log \frac{R}{H^0(x)} \)^{\la}}, \quad x \in \Omega.
\]
Note that
\[
	{\rm div} \( \frac{x}{(H^0(x))^N} \) = 0 
\]
by the former calculation in the proof of Theorem \ref{Thm:H_p}.
Then we have
\begin{align*}
	\mbox{div}\, G(x) &=  \frac{x}{(H^0(x))^N} \cdot (-\la) \( \log \frac{R}{H^0(x)} \)^{-\la-1} \( -\frac{1}{H^0(x)} \) \nabla H^0(x) \\ 
	&= \la \frac{x \cdot \nabla H^0(x)}{(H^0(x))^{N+1}} \( \log \frac{R}{H^0(x)} \)^{-\la-1} \\
	&= \frac{\la}{(H^0(x))^N \( \log \frac{R}{H^0(x)} \)^{\la +1}}.
\end{align*}
In particular, by choosing $\la = N-1$, we have
\begin{align*}
	&\int_{\Omega} \frac{|u(x)|^N}{(H^0(x))^N \( \log \frac{R}{H^0(x)} \)^N} dx \\ 
	&= \left| \frac{1}{N-1} \int_{\Omega} |u|^N \mbox{div} F(x) dx \right| \\
	&= \left| - \frac{1}{N-1} \int_{\Omega} \nabla (|u|^N ) \cdot \frac{x}{(H^0(x))^N \( \log \frac{R}{H^0(x)} \)^{N-1}} dx \right| \\
	&= \left| - \frac{N}{N-1} \int_{\Omega} |u|^{N-2} u \nabla u  \cdot \frac{\frac{x}{H^0(x)}}{(H^0(x))^{N-1} \( \log \frac{R}{H^0(x)} \)^{N-1}} dx \right| \\
	&\le  \frac{N}{N-1} \( \int_{\Omega} \frac{|u|^N}{(H^0(x))^N \( \log \frac{R}{H^0(x)} \)^N} dx \)^{\frac{N-1}{N}}
	\( \int_{\Omega} \left| \frac{x}{H^0(x)} \cdot \nabla u \right|^N dx \)^{\frac{1}{N}}
\end{align*}
for $u \in C_0^{\infty}(\Omega)$. This yields the conclusion.
\end{proof}


\section{Hardy inequality of geometric type}
\label{section:geo Hardy}

Let $\Omega$ be a domain in $\re^N$ with Lipschitz boundary and let 
\begin{equation}
\label{d_H}
	d_H(x) = \inf_{y \in \pd\Omega} H^0(x-y)
\end{equation}
be {\it the anisotropic distance} of $x \in \ol{\Omega}$ to the boundary of $\Omega \subset \re^N$.
Then we have
\[
	H(\nabla d_H(x)) = 1 \quad a.e. \ \text{in} \ \Omega
\]
and 
\[
	\frac{1}{\beta} d(x) \le d_H(x) \le \frac{1}{\alpha} d(x)
\]
where $d(x) = \inf_{y \in \pd\Omega} |x-y|$ is the Euclidean distance from the boundary $\pd\Omega$.
In \cite{Della Pietra-Blasio-Gavitone}, the authors studied the {\it anisotropic Hardy inequality of geometric type} as follows:
\begin{theorem}{\rm (\cite{Della Pietra-Blasio-Gavitone})}
\label{Thm:geometric H_2}
Suppose $d_H$ is a $\Delta_H$-superharmonic in $\Omega$, i.e.,
\[
	-\Delta_H d_H \ge 0
\]
in the distribution sense.
Then the inequality
\begin{equation}
\label{eq:DPBG}
	\frac{1}{4} \intO \frac{|u|^2}{(d_H(x))^2} dx \le \intO (H(\nabla u))^2 dx
\end{equation}
holds true for any $u \in C_0^{\infty}(\Omega)$.
\end{theorem}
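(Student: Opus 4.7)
The plan is to adapt the divergence-theorem technique of Theorem~\ref{Thm:H_p} to the Finsler boundary-distance setting, by selecting a vector field whose divergence, under the superharmonicity assumption, dominates the Hardy weight $1/d_H^2$.

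First I would introduce
\[
	F(x) = \frac{(\nabla_{\xi} H)(\nabla d_H(x))}{d_H(x)}, \qquad x \in \Omega,
\]
and compute $\mbox{div}\, F$ by the product rule. Two ingredients are essential: $H(\nabla d_H) = 1$ a.e.\ in $\Omega$, which together with Proposition~\ref{Prop:identities}~(1) gives $\nabla d_H \cdot (\nabla_{\xi} H)(\nabla d_H) = 1$ a.e.; and the fact that $H(\nabla d_H) \equiv 1$ lets me identify $\Delta_H d_H$ with $\mbox{div}\((\nabla_\xi H)(\nabla d_H)\)$ in the distributional sense. Combining these,
\[
	\mbox{div}\, F = -\frac{1}{d_H^2} + \frac{\Delta_H d_H}{d_H},
\]
so the hypothesis $-\Delta_H d_H \ge 0$ yields $-\mbox{div}\, F \ge 1/d_H^2$ in $\mathcal{D}'(\Omega)$.

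Next, for any $u \in C_0^{\infty}(\Omega)$ I would test this distributional inequality against the nonnegative function $u^2$ and integrate by parts to obtain
\[
	\intO \frac{u^2}{d_H^2}\, dx \le -\intO u^2\, \mbox{div}\, F\, dx = 2 \intO u\, \nabla u \cdot F\, dx.
\]
Then, applying the Schwarz inequality \eqref{Schwarz} with $\xi = \nabla u$ and $x = (\nabla_\xi H)(\nabla d_H)$, combined with Proposition~\ref{Prop:identities}~(4'), namely $H^0\((\nabla_\xi H)(\nabla d_H)\) = 1$, gives the pointwise estimate $|\nabla u \cdot (\nabla_\xi H)(\nabla d_H)| \le H(\nabla u)$. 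Substituting and invoking the Cauchy--Schwarz inequality in $L^2(\Omega)$ produces
\[
	\intO \frac{u^2}{d_H^2}\, dx \le 2 \( \intO \frac{u^2}{d_H^2}\, dx \)^{1/2} \( \intO H^2(\nabla u)\, dx \)^{1/2},
\]
which, after dividing through, yields \eqref{eq:DPBG}.

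The main obstacle is the rigorous handling of the distributional superharmonicity. Since $d_H$ is only Lipschitz in general, $\Delta_H d_H$ is merely a distribution (in fact a nonpositive Radon measure, by assumption), so the identity for $\mbox{div}\, F$ and the integration by parts against $u^2 \in C_c^{\infty}(\Omega)$ must both be interpreted in $\mathcal{D}'(\Omega)$. A standard remedy is to mollify $d_H$, replace $d_H$ in the denominator by $d_H + \varepsilon$, carry out the estimates where everything is smooth, and pass to the limit — using crucially that $d_H$ is bounded below on $\mbox{supp}(u)$. The same scheme, but with vector field $H^{p-1}(\nabla\delta)(\nabla_\xi H)(\nabla\delta)/\delta^{p-1}$ and H\"older's inequality in place of Cauchy--Schwarz, would also produce the $p$-version \eqref{eq:geometric Finsler Hardy} stated in the introduction.
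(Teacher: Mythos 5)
Your proposal is correct and follows essentially the same route as the paper: the paper proves the general $L^p$ statement (Theorem \ref{Thm:geometric H_p}) with the vector field $H^{p-1}(\nabla\delta)(\nabla_\xi H)(\nabla\delta)/\delta^{p-1}$, computes its divergence, uses the superharmonicity to discard the $\Delta_{H,p}\delta$ term, integrates by parts against $|u|^p$, and applies H\"older together with $H^0\bigl((\nabla_\xi H)(\nabla d_H)\bigr)=1$ — your argument is exactly the $p=2$, $\delta=d_H$ specialization of this, including the final passage from $|\nabla u\cdot(\nabla_\xi H)(\nabla d_H)|$ to $H(\nabla u)$ as in Remark \ref{rem:geom}. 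Your closing remark on mollifying and replacing $d_H$ by $d_H+\varepsilon$ is a sensible rigor point that the paper leaves implicit.
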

Note that if $\Omega$ is convex, the assumption $-\Delta_H d_H \ge 0$ holds true. 
In \cite{Della Pietra-Blasio-Gavitone}, it is shown that there exists a non-convex domain $\Omega$ such that $d_H$ is $\Delta_H$-superharmonic on $\Omega$.
For the Euclidean geometric type Hardy inequalities, see \cite{Brezis-Marcus}, \cite{FTT(JEMS)}, \cite{MMP}, \cite{Tidblom(PAMS)}, and references there in.

In the next theorem, we improve their result in the following form:
\begin{theorem}{\rm (Anisotropic $L^p$-Hardy inequality of geometric type)}
\label{Thm:geometric H_p}
Let $1 < p < \infty$ and suppose $\delta = \delta(x)$ is a nonnegative, $\Delta_{H,p}$-superharmonic function on $\Omega$, i.e.,
\[
	-\Delta_{H,p} \delta \ge 0
\]
in the distributional sense. 
Then the inequality
\begin{equation}
\label{eq:geometric Finsler Hardy}
	\( \frac{p-1}{p} \)^p \intO \frac{|u|^p}{\delta^p} H^p(\nabla \delta) dx \le \intO |\nabla u \cdot (\nabla_{\xi} H)(\nabla \delta)|^p dx
\end{equation}
holds true for any $u \in C_0^{\infty}(\Omega)$.
\end{theorem}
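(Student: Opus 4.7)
My plan is to use the same unified divergence-formula strategy that proved Theorems~\ref{Thm:H_p} and \ref{Thm:H_N}: exhibit a vector field $F$ whose divergence reproduces the Hardy weight, integrate $|u|^p\,\mbox{div}\,F$ by parts, and close the estimate with H\"older's inequality. Inspecting the form of the desired inequality and of $\Delta_{H,p}\delta$, the natural choice is
\[
F(x) = \frac{H^{p-1}(\nabla\delta)\,(\nabla_\xi H)(\nabla\delta)}{\delta^{p-1}},
\]
namely the integrand of $\Delta_{H,p}\delta$ weighted by $\delta^{-(p-1)}$.

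The first step is to compute $\mbox{div}\,F$. The Leibniz rule splits it into the term $\delta^{-(p-1)}\Delta_{H,p}\delta$, which is nonpositive by the superharmonicity hypothesis, and a term proportional to $\delta^{-p}\,H^{p-1}(\nabla\delta)\,(\nabla_\xi H)(\nabla\delta)\cdot\nabla\delta$. Here the crucial simplification comes from Proposition~\ref{Prop:identities}\,(1), which collapses the inner product to $H(\nabla\delta)$, so the second term becomes $-(p-1)H^p(\nabla\delta)/\delta^p$. Together these yield the pointwise (distributional) inequality $-\mbox{div}\,F \ge (p-1)H^p(\nabla\delta)/\delta^p$. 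Multiplying by $|u|^p$ with $u \in C_0^\infty(\Omega)$ and integrating by parts (so that all boundary terms vanish) gives
\[
(p-1)\intO\frac{|u|^p}{\delta^p}H^p(\nabla\delta)\,dx \le p\intO |u|^{p-2}u\,\nabla u\cdot F\,dx.
\]
A H\"older inequality with conjugate exponents $p/(p-1)$ and $p$ applied to the right-hand side, splitting the integrand as $|u|^{p-1}\delta^{-(p-1)}H^{p-1}(\nabla\delta)$ against $|\nabla u\cdot(\nabla_\xi H)(\nabla\delta)|$, followed by dividing through by the common Hardy-weight factor and raising to the $p$-th power, would deliver \eqref{eq:geometric Finsler Hardy} with the constant $\(\frac{p-1}{p}\)^p$.

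The main obstacle will be a rigorous justification of the integration by parts: $\delta$ is only assumed nonnegative and may vanish inside $\Omega$ or at $\partial\Omega$, and the hypothesis $-\Delta_{H,p}\delta \ge 0$ is purely distributional, so the pointwise manipulations above are only formal. The standard remedy is to regularize by replacing $\delta$ with $\delta_\varepsilon = \delta + \varepsilon$, so that $F_\varepsilon$ carries the bounded weight $\delta_\varepsilon^{-(p-1)}$, to test the distributional superharmonicity against the admissible nonnegative compactly supported function $|u|^p/\delta_\varepsilon^{p-1}$, and then to pass to the limit $\varepsilon \to 0^+$ via monotone convergence on the Hardy side and dominated convergence on the gradient side. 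A preliminary mollification of $\delta$ may be required to pair $\Delta_{H,p}\delta$ classically with the test function before the limit is taken.
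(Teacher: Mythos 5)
Your proposal is correct and follows essentially the same route as the paper: the vector field you write down is exactly the field $H(F)(\nabla_{\xi}H)(F)=\delta^{1-p}H^{p-1}(\nabla\delta)(\nabla_{\xi}H)(\nabla\delta)$ whose divergence the paper computes, the splitting into $\delta^{1-p}\Delta_{H,p}\delta$ plus $-(p-1)H^{p}(\nabla\delta)/\delta^{p}$ via Proposition \ref{Prop:identities}(1) is identical, and the integration by parts followed by H\"older with exponents $p/(p-1)$ and $p$ matches the paper's argument step for step. Your closing remarks on regularizing $\delta$ to justify the formal manipulations go slightly beyond what the paper records, but they do not change the method.
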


\begin{remark}\label{rem:geom}\rm
Note that by (\ref{Schwarz}) and Proposition \ref{Prop:identities} (4'), we have
\[
	|\nabla u \cdot (\nabla_{\xi} H)(\nabla \delta)| \le H(\nabla u) H^0((\nabla_{\xi} H)(\nabla \delta)) = H(\nabla u).
\]
Thus, by \eqref{eq:geometric Finsler Hardy}, we have the following inequality
\begin{equation}
\label{eq:geometric Finsler Hardy_d}
	\( \frac{p-1}{p} \)^p \intO \frac{|u|^p}{(d_H(x))^p} dx \le \intO (H(\nabla u))^p dx.
\end{equation}
Moreover if the domain satisfies the assumption $-\Delta_H d_H \ge 0$ in the distribution sense
(this is the case if $\Omega$ is convex), 
then taking $\delta = d_H$ and using $H(\nabla d_H) = 1$ a.e. in $\Omega$, 
we have $-\Delta_{H,p} d_H = -\Delta_{H} d_H \ge 0$.
Thus  Theorem \ref{Thm:geometric H_p} is an improvement of the result proved in \cite{Della Pietra-Blasio-Gavitone}, 
which gives the inequality \eqref{eq:geometric Finsler Hardy_d} when $p = 2$ under the assumption $-\Delta_{H} d_H \ge 0$ .
\end{remark}
\bigskip

\begin{proof}
For $x \in \Omega$, define
\[
	F(x) = \frac{(H(\nabla \delta(x)))^{p-2}}{\delta(x)^{p-1}} \nabla \delta(x).
\] 
Then we have
\begin{align*}
	H(F)(\nabla_{\xi} H)(F) =  \frac{(H(\nabla \delta))^{p-1}}{\delta^{p-1}} (\nabla_{\xi} H)(\nabla \delta)
\end{align*}
and
\begin{align*}
	&\mbox{div} (H(F)(\nabla_{\xi} H)(F)) \\
	&= \delta^{1-p} \mbox{div} \( (H(\nabla \delta))^{p-1} (\nabla_{\xi} H)(\nabla \delta) \) 
	+ (1-p) \delta^{-p} \nabla \delta \cdot (H(\nabla \delta))^{p-1} (\nabla_{\xi} H)(\nabla \delta) \\ 
	&= \frac{\Delta_{H,p} \delta}{\delta^{p-1}} - (p-1) \frac{(H(\nabla \delta))^p}{\delta^p}.
\end{align*}
Then for $u \in C_0^{\infty}(\Omega)$, we have
\begin{align*}
	&\int_{\Omega} |u|^p \( \frac{\Delta_{H,p} \delta}{\delta^{p-1}} - (p-1) \frac{(H(\nabla \delta))^p}{\delta^p} \) dx \\ 
	&= \int_{\Omega} |u|^p \mbox{div} (H(F)(\nabla_{\xi} H)(F)) dx \\ 
	&= -p \int_{\Omega} |u|^{p-1} (sgn(u)) \nabla u \cdot  (H(F) (\nabla_{\xi} H)(F) dx \\ 
	&= -p \int_{\Omega} |u|^{p-1} (sgn(u)) \nabla u \cdot \frac{(H(\nabla \delta))^{p-1}}{\delta^{p-1}} (\nabla_{\xi} H)(\nabla \delta) dx.
\end{align*}
Now, by the assumption $\Delta_{H,p} \delta \le 0$, we see
\begin{align*}
	0 &\ge \int_{\Omega} |u|^p \( 0 - (p-1) \frac{(H(\nabla \delta))^p}{\delta^p} \) dx \\ 
	&\ge -p \int_{\Omega} |u|^{p-1} (sgn(u)) \nabla u \cdot  \frac{(H(\nabla \delta))^{p-1}}{\delta^{p-1}} (\nabla_{\xi} H)(\nabla \delta) dx,
\end{align*}
and, by the H\"older inequality, this leads to
\begin{align*}
	(p-1) \int_{\Omega} |u|^p \frac{(H(\nabla \delta))^p}{\delta^p} dx &\le p \int_{\Omega} |u|^{p-1} \Big| \nabla u \cdot (\nabla_{\xi} H)(\nabla \delta) \Big| \frac{(H(\nabla \delta))^{p-1}}{\delta^{p-1}} dx\\
	&\le p \left (\int_{\Omega} |u|^{p}\frac{(H(\nabla \delta))^p}{\delta^p} dx\right)^\frac{p-1}{p}
	\left (\int_{\Omega}\Big| \nabla u \cdot (\nabla_{\xi} H)(\nabla \delta) \Big|^p	\right)^\frac{1}{p}.
\end{align*}
This gives us the result.
\end{proof}

Now some consequences of Theorem \ref{Thm:geometric H_p} are proved. 
The first one is an anisotropic Hardy inequality when $\Omega$ is the half-space
\[
	\re^N_{+} = \{ x = (x_1,x_2, \cdots, x_N) \, : \, x_N > 0 \}
\]
with $N \ge 2$.
Denote $d_H(x) = \inf_{y \in \pd \re^N_{+}} H^0(x-y)$ the anisotropic distance from the boundary for $x \in \re^N_{+}$. 
Note that $H(\nabla d_H(x)) = 1$ a.e. and,  by the convexity of $\re^N_{+}$,  $-\Delta_{H,p} d_H \ge 0$ holds for $1 < p < \infty$ and $N \ge 2$.
By Theorem \ref{Thm:geometric H_p} and the Remark \ref{rem:geom}, we have {\it the anisotropic Hardy inequality on the half-space}:

\begin{corollary}{\rm (Anisotropic Hardy inequality on the half-space)}
\label{cor:geometric H_p}
Assume $1 < p < \infty$ and $N \ge 2$. 
Then the inequality
\begin{equation}
\label{H_p half}
	\(\frac{p-1}{p}\)^p \int_{\re^N_{+}} \frac{|u|^p}{\( d_H(x) \)^p} dx \le \int_{\re^N_{+}} |\nabla u \cdot (\nabla_{\xi} H)(\nabla d_H)|^p dx
\end{equation}
for any $u \in C_0^{\infty}(\re^N_{+})$. 
\end{corollary}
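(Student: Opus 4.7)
The plan is to apply Theorem \ref{Thm:geometric H_p} on $\Omega = \re^N_+$ with the choice $\delta = d_H$. The function $d_H$ is nonnegative by its very definition \eqref{d_H}, and the identity $H(\nabla d_H) = 1$ a.e.\ (recorded for general Lipschitz domains at the beginning of Section \ref{section:geo Hardy}) collapses the left-hand side of \eqref{eq:geometric Finsler Hardy} into $\( \frac{p-1}{p} \)^p \int_{\re^N_+} |u|^p / (d_H(x))^p\, dx$, while its right-hand side already matches the right-hand side of \eqref{H_p half} verbatim. Therefore the only step that genuinely requires justification is the $\Delta_{H,p}$-superharmonicity hypothesis $-\Delta_{H,p} d_H \ge 0$ on $\re^N_+$ in the distributional sense.

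To establish this hypothesis, I would exploit the very simple geometry of the half-space and compute $d_H$ in closed form. Writing $x = (x', x_N)$ with $x' \in \re^{N-1}$ and parametrizing $\pd \re^N_+$ by $y = (y', 0)$, the positive $1$-homogeneity of $H^0$ gives, for $x_N > 0$,
\[
    d_H(x) = \inf_{y' \in \re^{N-1}} H^0(x' - y', x_N) = x_N \inf_{w' \in \re^{N-1}} H^0(w', 1) =: c\, x_N,
\]
with a constant $c > 0$ (positivity following from the norm-like bounds on $H^0$). Thus $d_H$ is \emph{linear} in $x$, its gradient is the constant vector $c\, e_N$, and therefore $H^{p-1}(\nabla d_H)$ and $(\nabla_{\xi} H)(\nabla d_H)$ are both constant; taking divergences yields $\Delta_{H,p} d_H \equiv 0$ pointwise on $\re^N_+$, so in particular $-\Delta_{H,p} d_H \ge 0$ distributionally. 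Alternatively, one may invoke the general fact noted just before the statement of the corollary: convexity of $\re^N_+$ forces $-\Delta_H d_H \ge 0$, and because $H(\nabla d_H) = 1$ a.e., the relation $\Delta_{H,p} d_H = \Delta_H d_H$ upgrades this to the required form.

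With superharmonicity in hand, the proof is nothing more than direct substitution into Theorem \ref{Thm:geometric H_p}. The only conceivable obstacle, on more general domains, would be precisely the verification of $\Delta_{H,p}$-superharmonicity of the anisotropic distance function; on the half-space that verification costs essentially nothing, thanks to the linearity of $d_H$.
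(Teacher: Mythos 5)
Your proposal is correct and follows the paper's own route: the corollary is obtained by substituting $\delta = d_H$ into Theorem \ref{Thm:geometric H_p} together with $H(\nabla d_H)=1$ a.e.\ and Remark \ref{rem:geom}, the paper disposing of the hypothesis $-\Delta_{H,p} d_H \ge 0$ simply by citing the convexity of $\re^N_{+}$. Your additional explicit computation $d_H(x) = c\,x_N$, which makes $\nabla d_H$ constant and hence $\Delta_{H,p} d_H \equiv 0$, is a correct and self-contained verification of that hypothesis and only strengthens the argument.
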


The second  consequence of Theorem \ref{Thm:geometric H_p} is a lower bound for $\la_{1,p}(\Omega)$, the first eigenvalue of the Finsler $p$-Laplacian $\Delta_{H,p}$, by means of anisotropic inradius. 
Define 
\[
	\la_{1,p}(\Omega) = \min_{u \in W^{1,p}_0(\Omega), u \ne 0} \frac{\intO (H(\nabla u))^p dx}{\intO |u|^p dx}
\]
and assume that $\tau_H$, {\it the anisotropic inradius} of $\Omega$, is finite, i.e.,
\[
	\tau_H = \sup_{x \in \Omega} d_H(x) < \infty.
\]
We prove the following result:
\begin{corollary}
\label{Cor:lower bound}
Let $\Omega$ be a bounded domain in $\re^N$ satisfying $-\Delta_{H} d_H \ge 0$ in the distribution sense and $\tau_H < \infty$.
Let $\la_{1,p}(\Omega)$ be the first eigenvalue of the Finsler $p$-Laplacian $\Delta_{H,p}$.
Then it holds that
\[
	\la_{1,p}(\Omega) \ge \(\frac{p-1}{p}\)^p \(\frac{1}{\tau_H}\)^p. 
\]
\end{corollary}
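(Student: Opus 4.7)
The plan is to deduce the eigenvalue bound directly from Theorem \ref{Thm:geometric H_p} applied with $\delta = d_H$, combined with the obvious estimate $d_H(x) \le \tau_H$ and the variational characterization of $\la_{1,p}(\Omega)$.

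First I would check that the hypotheses of Theorem \ref{Thm:geometric H_p} are satisfied with the choice $\delta = d_H$. Since $H(\nabla d_H) = 1$ almost everywhere in $\Omega$, one has $H^{p-1}(\nabla d_H) = 1$ a.e., so the Finsler $p$-Laplacian of $d_H$ reduces to the Finsler Laplacian:
\[
    \Delta_{H,p} d_H = \mbox{div}\bigl( H^{p-1}(\nabla d_H)(\nabla_\xi H)(\nabla d_H) \bigr) = \mbox{div}\bigl( (\nabla_\xi H)(\nabla d_H) \bigr) = \Delta_H d_H
\]
in the distribution sense. Hence the standing assumption $-\Delta_H d_H \ge 0$ implies $-\Delta_{H,p} d_H \ge 0$, so Theorem \ref{Thm:geometric H_p} applies.

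Next, as recorded in Remark \ref{rem:geom}, specializing \eqref{eq:geometric Finsler Hardy} with $\delta = d_H$ and using $H(\nabla d_H) = 1$ a.e. together with the pointwise bound $|\nabla u \cdot (\nabla_\xi H)(\nabla d_H)| \le H(\nabla u)$ from \eqref{Schwarz} and Proposition \ref{Prop:identities} (4'), we obtain
\[
    \Bigl(\tfrac{p-1}{p}\Bigr)^p \intO \frac{|u|^p}{(d_H(x))^p}\, dx \le \intO (H(\nabla u))^p\, dx
\]
for every $u \in C_0^\infty(\Omega)$. Since $d_H(x) \le \tau_H$ for every $x \in \Omega$, the left-hand side is bounded below by $\bigl(\tfrac{p-1}{p}\bigr)^p \tau_H^{-p} \intO |u|^p\, dx$, which yields
\[
    \Bigl(\tfrac{p-1}{p}\Bigr)^p \Bigl(\tfrac{1}{\tau_H}\Bigr)^p \intO |u|^p\, dx \le \intO (H(\nabla u))^p\, dx.
\]

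Finally I would extend this inequality to all $u \in W^{1,p}_0(\Omega) \setminus \{0\}$ by density of $C_0^\infty(\Omega)$, divide by $\intO |u|^p\, dx$, and take the infimum to conclude that $\la_{1,p}(\Omega) \ge \bigl(\tfrac{p-1}{p}\bigr)^p \tau_H^{-p}$. There is no serious obstacle: the only point requiring a moment of care is the identification $\Delta_{H,p} d_H = \Delta_H d_H$ as distributions, which rests on $H(\nabla d_H) = 1$ a.e.; everything else is a routine chain of inequalities.
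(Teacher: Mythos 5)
Your proposal is correct and follows essentially the same route as the paper: specialize the geometric Hardy inequality to $\delta = d_H$ (using $H(\nabla d_H)=1$ a.e.\ to identify $\Delta_{H,p}d_H$ with $\Delta_H d_H$, exactly as in Remark \ref{rem:geom}), bound $d_H \le \tau_H$, and compare with the Rayleigh quotient. The only cosmetic difference is that the paper plugs in the normalized first eigenfunction directly while you take the infimum over all of $W^{1,p}_0(\Omega)$ after a density argument; both require the same extension of \eqref{eq:geometric Finsler Hardy_d} from $C_0^\infty(\Omega)$ to $W^{1,p}_0(\Omega)$.
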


\begin{proof}
Applying \eqref{eq:geometric Finsler Hardy_d} to the first eigenfunction $\phi$ of $\la_{1,p}(\Omega)$, normalized as $\| \phi \|_{L^p(\Omega)} = 1$, we obtain
\[
	\la_{1,p}(\Omega) = \intO (H(\nabla \phi))^p dx \ge \( \frac{p-1}{p} \)^p \intO \frac{|\phi|^p}{(d_H(x))^p} dx \ge \( \frac{p-1}{p} \)^p \frac{1}{(\tau_H)^p}.
\]
\end{proof}


\section{Weighted Finsler-Hardy-Poincar\'e inequalities}
\label{section:Weighted}

In this section, we prove weighted version of Finsler Hardy-Poincar\'e type inequalities on $\re^N$, following arguments in \cite{Kombe-Ozaydin(2009)} and \cite{Kombe-Ozaydin(2013)}. 

\begin{theorem}{\rm (Weighted anisotropic Hardy-Poincar\'e inequality)}
\label{Thm:WFH_p}
Let $1 \le p < \infty$. 
Assume there exists a nonnegative function $\rho$ on $\re^N \setminus \{0\}$ such that $H(\nabla \rho) = 1$ and $\Delta_H \rho \ge \frac{C}{\rho}$ in the sense of distributions
where $C > 0$.
Then for $\alpha \in \re$ such that $C + \alpha > -1$, the following inequality holds true 
\begin{equation}
\label{WFH_p}
	\( \frac{C + \alpha + 1}{p} \)^p \int_{\re^N} \rho^{\alpha} |u|^p dx \le \int_{\re^N} \rho^{\alpha+p} \left| (\nabla_{\xi} H)(\nabla \rho) \cdot \nabla u \right|^p dx
\end{equation}
for any $u \in C_0^{\infty}(\re^N \setminus \{ 0 \}$.
\end{theorem}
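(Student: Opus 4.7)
The plan is to follow the vector-field/divergence strategy of Theorems \ref{Thm:H_p}, \ref{Thm:H_N}, and \ref{Thm:geometric H_p}: construct a vector field $F$ whose divergence reproduces the weight $\rho^{\alpha}$, test against $|u|^p$, exploit the distributional bound $\Delta_H\rho\ge C/\rho$ to obtain the sharp constant, and close with H\"older's inequality. Concretely I would take
\[
	F(x) = \rho(x)^{\alpha+1}(\nabla_{\xi} H)(\nabla \rho(x)).
\]

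Computing the divergence via the product rule gives
\[
	\mbox{div}\, F = (\alpha+1)\rho^{\alpha}\,\nabla\rho\cdot(\nabla_{\xi} H)(\nabla\rho) + \rho^{\alpha+1}\,\mbox{div}\bigl((\nabla_{\xi} H)(\nabla\rho)\bigr).
\]
By Proposition \ref{Prop:identities} (1), the hypothesis $H(\nabla\rho)=1$ forces $\nabla\rho\cdot(\nabla_{\xi} H)(\nabla\rho)=H(\nabla\rho)=1$; since $H(\nabla\rho)\equiv 1$ is constant, the same hypothesis also reduces the Finsler Laplacian to $\Delta_H\rho=\mbox{div}\bigl((\nabla_{\xi} H)(\nabla\rho)\bigr)$. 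The superharmonicity assumption therefore yields the distributional bound $\mbox{div}\,F \ge (C+\alpha+1)\rho^{\alpha}$, with coefficient $C+\alpha+1>0$ strictly positive. Testing against $|u|^p\ge 0$ and integrating by parts (no boundary contribution since $u$ is supported away from the singularity of $\rho$) produces
\[
	(C+\alpha+1)\intRN \rho^{\alpha}|u|^p\,dx \le -p\intRN |u|^{p-2}u\,\rho^{\alpha+1}\,(\nabla_{\xi} H)(\nabla\rho)\cdot\nabla u\,dx.
\]
Splitting the weight as $\rho^{\alpha+1}=\rho^{\alpha(p-1)/p}\cdot\rho^{(\alpha+p)/p}$ and applying H\"older with exponents $p/(p-1)$ and $p$ factors the right-hand side into $p\bigl(\intRN \rho^{\alpha}|u|^p\,dx\bigr)^{(p-1)/p}\bigl(\intRN \rho^{\alpha+p}|(\nabla_{\xi} H)(\nabla\rho)\cdot\nabla u|^p\,dx\bigr)^{1/p}$; dividing through and raising to the $p$-th power then yields \eqref{WFH_p}. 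The case $p=1$ requires no H\"older step and is immediate from the same divergence identity.

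The main delicate point is the rigorous meaning of the integration by parts, since $\Delta_H\rho$ is only a distribution and $(\nabla_{\xi} H)(\nabla\rho)$ is in general only $L^\infty_{\rm loc}$. I would read the whole argument as pairing the distribution $\Delta_H\rho$ with the test function $|u|^p\rho^{\alpha+1}$ via
\[
	\langle \Delta_H\rho,\varphi\rangle = -\intRN (\nabla_{\xi} H)(\nabla\rho)\cdot\nabla\varphi\,dx,
\]
approximating $|u|^p$ by $(u^2+\varepsilon)^{p/2}$ when $p$ is not an even integer to secure enough smoothness of the test function, and then letting $\varepsilon\downarrow 0$. The assumption $C+\alpha>-1$ enters precisely to keep the left-hand coefficient strictly positive, legitimizing the division in the H\"older step.
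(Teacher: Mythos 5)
Your proposal is correct and follows essentially the same divergence/H\"older argument as the paper; the only cosmetic difference is that you absorb the weight $\rho^{\alpha}$ into the vector field $F=\rho^{\alpha+1}(\nabla_{\xi}H)(\nabla\rho)$, whereas the paper keeps the field $\rho(\nabla_{\xi}H)(\nabla\rho)$ and multiplies by the test function $\rho^{\alpha}|u|^p$, the integration by parts producing identical terms either way. Your added remarks on the distributional pairing and the $\varepsilon$-regularization of $|u|^p$ only make explicit what the paper leaves implicit.
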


\begin{proof}
From the assumptions, we see $\nabla \rho \cdot (\nabla_{\xi} H)(\nabla \rho) = H(\nabla \rho) = 1$ and 
\[
	\rho \Delta_H \rho = \rho \, {\rm div} ((\nabla_{\xi} H)(\nabla \rho)) \ge C.
\]
Thus
\begin{align*}
	{\rm div}(\rho (\nabla_{\xi} H(\nabla \rho)) = \rho \, {\rm div}((\nabla_{\xi} H)(\nabla \rho)) + \nabla \rho \cdot (\nabla_{\xi} H)(\nabla \rho) \ge C + 1.
\end{align*}
Multiplying this inequality by $\rho^{\alpha} |u|^p$ and integrating over $\re^N$, we have
\[
	(C + 1) \intRN \rho^{\alpha} |u|^p dx \le \intRN {\rm div}(\rho (\nabla_{\xi} H(\nabla \rho)) \rho^{\alpha} |u|^p dx.
\]
The divergence theorem and the H\"older inequality implies that
\begin{align*}
	(C + \alpha + 1) \intRN \rho^{\alpha} |u|^p dx &\le \left| -p \intRN \rho^{\alpha+1} |u|^{p-2} u \nabla u \cdot (\nabla_{\xi} H(\nabla \rho)) dx \right| \\
	&\le p \( \intRN \rho^{\alpha} |u|^p dx \)^{\frac{p-1}{p}} \( \intRN \rho^{\alpha+p} |\nabla u \cdot (\nabla_{\xi} H(\nabla \rho))|^p dx \)^{\frac{1}{p}}.
\end{align*}
After some manipulations, we have \eqref{WFH_p}.
\end{proof}

\begin{remark} \rm 
\label{remark:H^0}
Since by Proposition \ref{Prop:identities} (4),  $\rho(x) = H^0(x)$ satisfies that $H(\nabla \rho) = 1$ and $\Delta_H \rho = \frac{N-1}{\rho}$, we have from Theorem \ref{Thm:WFH_p} that
\begin{align*}
	\( \frac{N + \alpha}{p} \)^p \int_{\re^N} (H^0(x))^{\alpha} |u|^p dx \le \int_{\re^N} (H^0(x))^{\alpha+p} \left| \frac{x}{H^0(x)} \cdot \nabla u \right|^p dx
\end{align*}
for any $u \in C_0^{\infty}(\re^N \setminus \{ 0 \})$.
This improves Theorem 5.4 by Brasco-Franzina \cite{Brasco-Franzina}, because the right-hand side of the above inequality is less than or equal to
$\int_{\re^N} (H^0(x))^{\alpha+p} H^p(\nabla u) dx$.

\noindent Finally we recall that in the Euclidean case, i.e. $\rho(x)=H_0(x)=|x|$,  weighted  Hardy-Poincar\'e inequalities are well-known (see, for example,  \cite{ACP}, \cite{ABCMP1}, \cite{Sano-TF(DIE)} and references therein)
\end{remark}

{\it The Uncertainty Principle} in quantum mechanics, sometimes called Heisenberg-Pauli-Weyl inequality, is well known in Euclidean context and it claims that
\[
	\frac{N^2}{4} \( \intRN |f(x)|^2 dx \)^2 \le \( \intRN |x|^2 |f(x)|^2 dx \) \( \intRN |\nabla f(x)|^2 dx \) 
\]
for any $f \in C_0^{\infty}(\re^N)$.
In Finsler context, we obtain the following:

\begin{theorem}{\rm (Anisotropic uncertainty principle inequality)}
\label{Thm:FUP}
Let $1 \le p < \infty$ and $N \ge 2$. 
Assume there exists a nonnegative function $\rho$ on $\re^N \setminus \{0\}$ such that $H(\nabla \rho) = 1$ and $\Delta_H \rho \ge \frac{C}{\rho}$ in the sense of distributions
where $C > 0$.
Then the following inequality holds true: 
\begin{equation}
\label{FUP}
	\( \frac{C + 1}{2} \)^2 \( \int_{\re^N} |u|^2 dx \)^2 \le \( \int_{\re^N} \rho^2 |u|^2 dx \) \(\int_{\re^N} \left| (\nabla_{\xi} H)(\nabla \rho) \cdot \nabla u \right|^2 dx \)
\end{equation}
for any $u \in C_0^{\infty}(\re^N)$.
Especially, we have
\[
	\( \frac{N}{2} \)^2 \( \int_{\re^N} |u|^2 dx \)^2 \le \( \int_{\re^N} (H^0(x))^2 |u|^2 dx \) \(\int_{\re^N} \left| \frac{x}{\nabla (H^0(x))} \cdot \nabla u \right|^2 dx \)
\]
for any $u \in C_0^{\infty}(\re^N)$.
\end{theorem}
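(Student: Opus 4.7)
The plan is to adapt the divergence argument of Theorem \ref{Thm:WFH_p}, but to interrupt it before the final H\"older step and instead close with a Cauchy--Schwarz inequality in a different grouping that produces a product of two integrals on the right-hand side rather than a single integral. This is the classical route from a Hardy-type inequality to an uncertainty principle.

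Under the hypotheses $H(\nabla\rho)=1$ and $\Delta_H\rho\ge C/\rho$, Proposition \ref{Prop:identities}(1) together with the product rule give, distributionally,
\[
	\mbox{div}\bigl(\rho\,(\nabla_{\xi}H)(\nabla\rho)\bigr) = \rho\,\Delta_H\rho + (\nabla_{\xi}H)(\nabla\rho)\cdot\nabla\rho \ge C+1.
\]
Multiplying by $|u|^2$ with $u\in C_0^\infty(\re^N)$, integrating, and using the divergence theorem (followed by taking absolute values) yields
\[
	(C+1)\intRN |u|^2\,dx \le 2\intRN \rho\,|u|\,\bigl|\nabla u\cdot(\nabla_{\xi}H)(\nabla\rho)\bigr|\,dx.
\]
Applying Cauchy--Schwarz to the pair $(\rho|u|)$ and $|(\nabla_{\xi}H)(\nabla\rho)\cdot\nabla u|$ gives
\[
	\frac{C+1}{2}\intRN |u|^2\,dx \le \Bigl(\intRN \rho^2|u|^2\,dx\Bigr)^{1/2}\Bigl(\intRN |(\nabla_{\xi}H)(\nabla\rho)\cdot\nabla u|^2\,dx\Bigr)^{1/2},
\]
and squaring delivers \eqref{FUP}.

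For the specialization, I would take $\rho=H^0$ and invoke Remark \ref{remark:H^0} to verify $H(\nabla H^0)=1$ and $\Delta_H H^0=(N-1)/H^0$; then $C=N-1$ and $(C+1)/2=N/2$. Proposition \ref{Prop:identities}(5) identifies $(\nabla_{\xi}H)(\nabla H^0(x))$ with $x/H^0(x)$, converting the gradient factor into $(x/H^0(x))\cdot\nabla u$ and producing the second displayed inequality.

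The main technical point is the passage from the distributional inequality to its integrated form against the Lipschitz function $|u|^2$. Since $H(\nabla\rho)=1$ and $\alpha|\xi|\le H(\xi)\le\beta|\xi|$, the gradient $|\nabla\rho|$ is bounded and $\rho$ is Lipschitz, so the vector field $\rho(\nabla_{\xi}H)(\nabla\rho)$ is bounded and pairing with the compactly supported Lipschitz function $|u|^2$ is standard. For $\rho=H^0$ this is even more transparent, since $\rho(\nabla_{\xi}H)(\nabla\rho)=x$ by Proposition \ref{Prop:identities}(5) is smooth across the origin; the hypothesis $N\ge 2$ then allows a routine radial cutoff in a Finsler annulus $\mathcal{W}_{2\eps}\setminus\mathcal{W}_\eps$ should one wish to work with test functions supported away from $0$ in the general case. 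This capacity-type passage is the only non-algebraic ingredient in the argument.
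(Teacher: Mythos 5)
Your proposal is correct and follows essentially the same route as the paper: the vector field $\rho\,(\nabla_{\xi}H)(\nabla\rho)$ you use is exactly $\tfrac{1}{2}\nabla_H(\rho^2)$ under the hypothesis $H(\nabla\rho)=1$, so your divergence identity $\mathrm{div}(\rho(\nabla_{\xi}H)(\nabla\rho))\ge C+1$ is just one half of the paper's $\Delta_H(\rho^2)\ge 2+2C$, and the subsequent integration by parts and Cauchy--Schwarz steps coincide. The specialization via $\rho=H^0$, $C=N-1$, and Proposition \ref{Prop:identities}(5) also matches the paper.
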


\begin{proof}
Since $H(\nabla \rho^2) = 2\rho H(\nabla \rho)$ and $(\nabla_{\xi} H)(\nabla \rho^2) = (\nabla_{\xi} H)(\nabla \rho)$ by Proposition \ref{Prop:identities},
by using assumptions, we have
\[
	\Delta_H (\rho^2) = {\rm div} (2 \rho H(\nabla \rho) (\nabla_{\xi} H)(\nabla \rho)) = 2H^2(\nabla \rho) + 2\rho \Delta_H \rho \ge 2 + 2C.
\]
Multiplying this inequality by $|u|^2$ and integrating over $\re^N$, we have
\[
	\intRN \Delta_H (\rho^2) |u|^2 dx \ge 2(1 + C) \intRN |u|^2 dx.
\]
On the other hand, integration by parts and Schwarz inequality implies
\begin{align*}
	\intRN \Delta_H (\rho^2) |u|^2 dx &= \intRN {\rm div} (2 \rho (\nabla_{\xi} H)(\nabla \rho)) |u|^2 dx \\
	&= - \intRN 2 \rho (\nabla_{\xi} H)(\nabla \rho) \cdot 2 u \nabla u dx \\
	&\le 4 \( \intRN \rho^2 |u|^2 dx \)^{\frac{1}{2}} \(\intRN |(\nabla_{\xi} H)(\nabla \rho) \cdot \nabla u |^2 dx \)^{\frac{1}{2}}.
\end{align*}
After some computations, we have \eqref{FUP}.
The last claim follows from Remark \ref{remark:H^0} and \eqref{FUP}.
\end{proof}


\section{The best constant and its attainability on Hardy inequalities} 
\label{section:attainability}

In this section we investigate the sharpness and the attainability of the constants in anisotropic Hardy inequalities in the previous sections. 
We call a function $f$ defined on $\re^N$ is {\it $H^0$-radial} if there exists a function $F = F(r)$ defined on $\re_{+}$ such that $f(x) = F(H^0(x))$.  
If $F$ is decreasing on $\re_{+}$, then $f$ is called $H^0$-radially decreasing.
For a function space $X$, we define $X_{H^0rad} = \{ u \in X \,:\, \text{$u$ is $H^0$-radial} \}$.
Admitting some ambiguity, we sometimes write $f(x) = f(r)$ with $r = H^0(x)$ for $H^0$-radial function $f$.
Let us begin with two preliminary results.

\begin{proposition}
\label{Prop:spherical}
Let $R \in (0, +\infty]$.
For any $u \in W_0^{1,p}(\mathcal{W}_R)$, 
there exists a $H^0$-radial function $U \in W_0^{1,p}(\mathcal{W}_R)$ such that the followings hold true: 
\begin{align}
\label{simm_1}
	\int_{\mathcal{W}_R} V(H^0(x)) |U|^p \,dx &= \int_{\mathcal{W}_R} V(H^0(x)) |u |^p dx, \\
\label{simm_2}
	\int_{\mathcal{W}_R}  |\nabla U|^p \,dx &\le \int_{\mathcal{W}_R} \left| \nabla u \cdot \frac{x}{H^0(x)} \right|^p dx
\end{align}
where $V = V(r)$ is any function on $[0, R]$. 
\end{proposition}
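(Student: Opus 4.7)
The plan is to construct $U$ as an $H^0$-spherical $L^p$-average of $u$ and then verify the two properties using Finsler polar coordinates, Fubini's theorem, and H\"older's inequality applied pointwise in the radial variable.

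Concretely, I would introduce the anisotropic polar decomposition: for $x\neq 0$ write $x = r\omega$ with $r = H^0(x)\in(0,R)$ and $\omega\in\partial\mathcal{W} = \{H^0 = 1\}$. The $1$-homogeneity of $H^0$ gives a coarea-type identity
\begin{equation*}
\int_{\mathcal{W}_R} f(x)\,dx = \int_0^R r^{N-1} \int_{\partial\mathcal{W}} f(r\omega)\,d\sigma(\omega)\,dr,
\end{equation*}
where $\sigma$ is a finite Borel measure on $\partial\mathcal{W}$ with total mass $\w_{N-1} = N\kappa_N$. I then set
\begin{equation*}
\tilde U(r) := \left(\frac{1}{\w_{N-1}}\int_{\partial\mathcal{W}}|u(r\omega)|^p\,d\sigma(\omega)\right)^{1/p}, \qquad U(x) := \tilde U(H^0(x)),
\end{equation*}
a nonnegative, $H^0$-radial function. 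A standard truncation and mollification argument places $U$ in $W_0^{1,p}(\mathcal{W}_R)$. Identity (\ref{simm_1}) then follows immediately from Fubini and the defining relation $\w_{N-1}\tilde U(r)^p = \int_{\partial\mathcal{W}}|u(r\omega)|^p\,d\sigma$: each side of (\ref{simm_1}) reduces to $\w_{N-1}\int_0^R V(r)r^{N-1}\tilde U(r)^p\,dr$.

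For the gradient estimate (\ref{simm_2}), I would differentiate the defining identity in $r$ and apply H\"older's inequality with exponents $p/(p-1)$ and $p$; after the $\tilde U(r)^{p-1}$ factors cancel, one obtains the pointwise bound
\begin{equation*}
|\tilde U'(r)|^p \le \frac{1}{\w_{N-1}}\int_{\partial\mathcal{W}}|\partial_r u(r\omega)|^p\,d\sigma(\omega).
\end{equation*}
Proposition~\ref{Prop:identities}(1') gives $\nabla H^0(x)\cdot x = H^0(x)$, so at $x=r\omega$ one has $\partial_r u(r\omega) = \nabla u(x)\cdot x/H^0(x)$, and similarly $\nabla U(x)\cdot x/H^0(x) = \tilde U'(H^0(x))$ for the $H^0$-radial $U$. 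Multiplying the pointwise bound by $\w_{N-1}r^{N-1}$, integrating in $r\in(0,R)$, and converting back to Cartesian coordinates then yields (\ref{simm_2}).

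The main technical obstacle I anticipate is the regularity bookkeeping: justifying differentiation under the integral sign for $\tilde U$, controlling the set $\{\tilde U = 0\}$ when canceling $\tilde U^{p-1}$ in the H\"older step, and verifying that the resulting $U$ inherits the zero boundary condition from $u$. These are handled by first approximating $u$ by smooth compactly supported functions and $|u|^p$ by $(|u|^p+\varepsilon^p)^{1/p}-\varepsilon$, performing the above computation for each regularization, and then passing to the limit $\varepsilon\to 0$ in each inequality.
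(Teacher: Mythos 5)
Your construction is exactly the one in the paper: the $H^0$-spherical $L^p$-average $\tilde U(r)=\bigl(\w_{N-1}^{-1}\int_{\pd\mathcal{W}}|u(r\w)|^p\,dS_\w\bigr)^{1/p}$ in Finsler polar coordinates, with \eqref{simm_1} from Fubini and \eqref{simm_2} from H\"older applied to $\tilde U'(r)$ after reducing to $u\in C_0^1(\mathcal{W}_R)$ by density. This matches the paper's proof, including the identification $\pd_r u(r\w)=\nabla u(x)\cdot x/H^0(x)$, so there is nothing to add.
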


\begin{proof}
For $x \in \re^N \setminus \{0\}$, let us write $x=r\w$ where $r=H^0(x)$, $\w \in \pd \mathcal{W}$ and $\w_{N-1}= P_H(\mathcal{W}; \re^N) = N \kappa_N$.
It is enough to show Proposition \ref{Prop:spherical} for $u \in C_0^1(\mathcal{W}_R)$ by the density argument.
For any $u \in C_0^1(\mathcal{W}_R)$, we set
\begin{align*}
	\ol{U}(r) = \(\w_{N-1}^{-1} \int_{\pd \mathcal{W}} |u(r \w)|^p dS_{\w} \)^{\frac{1}{p}},
\end{align*}
where $dS_{\w}$ denotes a measure on $\pd \mathcal{W}$ such that
$\int_{\pd \mathcal{W}} dS_{\w} = P_H(\mathcal{W}; \re^N) = \w_{N-1}$ holds true.
Then by the H\"older inequality, we have
\begin{align*}
	\left| \ol{U}'(r) \right| &= \w_{N-1}^{-\frac{1}{p}} \( \int_{\pd \mathcal{W}} |u (r \w)|^p dS_{\w} \)^{\frac{1}{p}-1} \left| \int_{\pd \mathcal{W}} |u(r \w)|^{p-2} u(r\w) \frac{\pd u}{\pd r}(r\w) dS_{\w} \right| \\
	&\le \(\w_{N-1}^{-1} \int_{\pd \mathcal{W}} \left| \frac{\pd u}{\pd r}(r \w) \right|^p dS_{\w} \)^{\frac{1}{p}}.
\end{align*}
Therefore for $U(x) = \ol{U}(H^0(x))$, we obtain
\begin{align*}
	\int_{\mathcal{W}_R} |\nabla U|^p dx &= \w_{N-1} \int_0^R |\ol{U}'(r)|^p r^{N-1} dr \\
	&\le \int_0^R \int_{\pd \mathcal{W}} \left| \frac{\pd u}{\pd r}(r \w) \right|^p r^{N-1} dr dS_{\w} \\
	&= \int_{\mathcal{W}_R} \left |\nabla u \cdot \frac{x}{H^0(x)} \right|^p dx < \infty.
\end{align*}
Thus $U \in W^{1,p}_0(\mathcal{W}_R)$ and \eqref{simm_2} is proved.
Moreover we obtain
\begin{align*}
\int_{\mathcal{W}_R} V(H^0(x)) |U|^p \,dx &= \w_{N-1} \int_0^R V(r) |U(r)|^p r^{N-1} \,dr \\
&= \int_0^R \int_{\pd \mathcal{W}} V(r) |u (r\w )|^p r^{N-1} dr dS_{\w} \\
&= \int_{\mathcal{W}_R} V(H^0(x)) |u |^p dx.
\end{align*}
Hence \eqref{simm_1} is proved.
\end{proof}

\begin{proposition}
\label{Prop:radial lemma}
For $R \in (0, +\infty]$, let $U \in C^1(0, R)$ with $U(R) := \lim_{r \to R} U(r) = 0$. 
Then the following pointwise estimates hold for any $r \in (0,R)$.
\begin{align}
\label{subcritical}
	|U(r)| &\le \( \frac{N-p}{p-1} \)^{\frac{p-1}{p}} \( \int_r^R |U'(s)|^p s^{N-1}\,ds \)^{\frac{1}{p}} r^{-\frac{N-p}{p}}, \quad (1< p < N), \\
\label{critical}
	|U(r)| &\le \( \int_r^R |U'(s)|^N s^{N-1}\,ds \)^{\frac{1}{N}} \( \log \frac{R}{r} \)^{\frac{N-1}{N}}. 
\end{align}
\end{proposition}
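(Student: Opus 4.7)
The proof is a standard one-dimensional argument: use the fundamental theorem of calculus together with a weighted H\"older inequality. Since $U \in C^1(0,R)$ with $U(R)=0$, for any $r \in (0,R)$ we have
\[
	U(r) = -\int_r^R U'(s)\, ds, \qquad \text{so} \qquad |U(r)| \le \int_r^R |U'(s)|\, ds.
\]
To recover the weight $s^{N-1}$ that appears on the right-hand side of the claimed estimates, I would split the integrand as
\[
	|U'(s)| = \bigl( |U'(s)| s^{(N-1)/p} \bigr) \cdot s^{-(N-1)/p}
\]
and apply H\"older's inequality with exponents $p$ and $p/(p-1)$ (replacing $p$ by $N$ in the critical case).

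For the subcritical case $1<p<N$, H\"older gives
\[
	|U(r)| \le \left( \int_r^R |U'(s)|^p s^{N-1}\, ds \right)^{1/p} \left( \int_r^R s^{-(N-1)/(p-1)}\, ds \right)^{(p-1)/p}.
\]
The exponent $-(N-1)/(p-1)$ is strictly less than $-1$ precisely when $p<N$, so the elementary primitive $\int s^{-(N-1)/(p-1)}\, ds = \frac{p-1}{-(N-p)}\, s^{-(N-p)/(p-1)}$ is integrable at $+\infty$, and one bounds
\[
	\int_r^R s^{-(N-1)/(p-1)}\, ds = \frac{p-1}{N-p}\bigl( r^{-(N-p)/(p-1)} - R^{-(N-p)/(p-1)} \bigr) \le \frac{p-1}{N-p}\, r^{-(N-p)/(p-1)}.
\]
Raising this to the power $(p-1)/p$ and simplifying the $r$-exponent yields $r^{-(N-p)/p}$ with the expected constant. (The inequality is also valid, as stated, in the limit $R=+\infty$.)

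For the critical case $p=N$, the same splitting with $p$ replaced by $N$ and H\"older with exponents $N,N/(N-1)$ produce
\[
	|U(r)| \le \left( \int_r^R |U'(s)|^N s^{N-1}\, ds \right)^{1/N} \left( \int_r^R s^{-1}\, ds \right)^{(N-1)/N},
\]
and the logarithmic primitive $\int_r^R s^{-1}\, ds = \log(R/r)$ gives the second inequality directly. There is no real obstacle: the only point to watch is that the auxiliary integral converges, which is exactly why the two regimes $1<p<N$ and $p=N$ produce qualitatively different (power vs.\ logarithmic) weights on the right-hand side. The argument is also insensitive to whether $R$ is finite or infinite, provided the integrals on the right are finite.
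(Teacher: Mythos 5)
Your proof is correct and is exactly the paper's argument (fundamental theorem of calculus followed by H\"older's inequality with the weight split $|U'(s)|=\bigl(|U'(s)|s^{(N-1)/p}\bigr)\cdot s^{-(N-1)/p}$), only with the auxiliary integral $\int_r^R s^{-(N-1)/(p-1)}\,ds$ worked out explicitly where the paper leaves it implicit. One remark: your computation actually yields the constant $\left(\frac{p-1}{N-p}\right)^{\frac{p-1}{p}}$ rather than the $\left(\frac{N-p}{p-1}\right)^{\frac{p-1}{p}}$ printed in \eqref{subcritical}, so the fraction in the stated constant appears to be inverted; this is a typo in the statement rather than a flaw in your argument, and it is harmless for the later application, which only uses that $r^{\frac{N-p}{p}}|U(r)|\to 0$.
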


\begin{proof}
Since
\begin{align*}
	|U(r)| &= \left| - \int_r^R U'(s) \,ds \right| \\
	&\le \( \int_r^R |U'(s)|^p s^{N-1}\,ds \)^{\frac{1}{p}} \( \int_r^R s^{-\frac{N-1}{p-1}}\,ds \)^{\frac{p-1}{p}},
\end{align*}
we obtain (\ref{subcritical}) and (\ref{critical}).
\end{proof}

For $\Omega \subseteq \re^N$ and $1 \le p < N$,
let us define the best constant of the anisotropic subcritical $L^p$-Hardy inequality on $\Omega$ as 
\begin{equation}
\label{H_p}
	H_p(\Omega) = \inf_{0 \not\equiv u \in W_0^{1,p}(\Omega )} \frac{\disp \intO \left| \frac{x}{H^0(x)} \cdot \nabla u \right|^p dx}{\disp \intO \frac{|u|^p}{H^0(x)^p} dx}.
\end{equation}
In order to prove Theorem \ref{Thm:H_p attainability} below, we need the following result.

\begin{lemma}
\label{lem:regularity}
If $u(x) = U(H^0(x)) \in W_0^{1,p}(\re^N)$ is a $H^0$-radial minimizer of \eqref{H_p} with $\Omega = \re^N$, then $U \in C^1(0, \infty)$.
\end{lemma}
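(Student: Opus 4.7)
The plan is to reduce to a one-dimensional Euler--Lagrange equation on $(0,\infty)$ and then bootstrap regularity by elementary ODE arguments. Since $u(x)=U(H^0(x))$, Proposition \ref{Prop:identities} (1') and the chain rule give
\[
\frac{x}{H^0(x)}\cdot \nabla u(x) = U'(H^0(x))\,\frac{x\cdot\nabla_x H^0(x)}{H^0(x)} = U'(H^0(x)),
\]
and the anisotropic polar decomposition used in the proof of Proposition \ref{Prop:spherical} turns the quotient \eqref{H_p} (with $\Omega=\re^N$) into
\[
\frac{\w_{N-1}\int_0^\infty |U'(r)|^p r^{N-1}\,dr}{\w_{N-1}\int_0^\infty |U(r)|^p r^{N-1-p}\,dr}.
\]
Finiteness of both integrals together with the positivity and smoothness of the weights on every compact subinterval of $(0,\infty)$ forces $U\in W^{1,p}_{\mathrm{loc}}((0,\infty))$, which by the one-dimensional Sobolev embedding gives a continuous representative for $U$.

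Next I would test the minimality of $u$ against radial perturbations $\phi(x)=\Phi(H^0(x))$ with $\Phi\in C_0^\infty((0,\infty))$; the same polar decomposition yields the weak Euler--Lagrange equation
\[
-\bigl(|U'|^{p-2}U'\, r^{N-1}\bigr)' = \lambda\, |U|^{p-2}U\, r^{N-1-p},\qquad r\in(0,\infty),
\]
with $\lambda = H_p(\re^N) = \bigl|\tfrac{N-p}{p}\bigr|^p$. Since $U$ is continuous, the right-hand side is a continuous function of $r$ on $(0,\infty)$. Integrating this equation therefore shows that $|U'|^{p-2}U'\, r^{N-1}$ is absolutely continuous on each compact subinterval of $(0,\infty)$; dividing by the smooth positive factor $r^{N-1}$ yields that $r\mapsto |U'(r)|^{p-2}U'(r)$ itself admits a continuous representative on $(0,\infty)$.

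Finally, because $p>1$ the map $\phi_p(t)=|t|^{p-2}t$ is a homeomorphism of $\re$ onto itself with continuous inverse $\phi_p^{-1}(s)=|s|^{1/(p-1)}\mathrm{sgn}(s)$. Composing $\phi_p^{-1}$ with the continuous function $|U'|^{p-2}U'$ produces a continuous representative of $U'$ on $(0,\infty)$, and hence $U\in C^1((0,\infty))$ as claimed.

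The main obstacle is the first step: one has to justify the anisotropic polar coordinate formula for \emph{both} terms of the Rayleigh quotient \eqref{H_p} evaluated on arbitrary radial test functions (not only on the symmetrized candidate produced in Proposition \ref{Prop:spherical}), and to verify that radial perturbations of $u$ remain admissible in $W_0^{1,p}(\re^N)$. Once the one-dimensional equation is in hand, the remaining regularity argument is a standard $p$-Laplacian bootstrap, hinging on the monotonicity of $t\mapsto |t|^{p-2}t$ for $p>1$.
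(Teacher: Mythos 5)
Your argument is correct and follows essentially the same route as the paper (which adapts Lemma 2.4 of Ioku--Ishiwata): pass to the radial weak Euler--Lagrange equation, deduce that $f(r)=r^{N-1}|U'(r)|^{p-2}U'(r)$ lies in $W^{1,1}_{\mathrm{loc}}((0,\infty))$ and is therefore absolutely continuous, and recover $U'$ by inverting $t\mapsto|t|^{p-2}t$. The only difference is cosmetic: you make explicit the continuity of the right-hand side and the final inversion step, which the paper leaves implicit.
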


\begin{proof}
The proof is similar to that of Lemma 2.4 in \cite{Ioku-Ishiwata}.
Take any $0< a < b < \infty$.  Note that $U \in L^p(a,b)$. 
By the characterization of one dimensional Sobolev space $W^{1,p}(a,b)$, $r \mapsto U(r)$ is locally absolutely continuous. 
Particularly, $U(r)$ is differentiable for almost all $r \in (a,b)$. 
Put $f(r)=r^{N-1}| \pd_r U(r)|^{p-2} \pd_r U(r)$, $g(r)= H_p(\re^N) r^{-p+N-1} |U(r)|^{p-2} U(r)$. 
From the weak form of the Euler Lagrange equation of $H_p(\re^N)$:
\begin{align*}
	\int_0^\infty f(r) \pd_r v \,dr = \int_0^\infty g(r) v \,dr \quad ({}^{\forall}v \in W_{0, H^0rad}^{1,p}(\re^N )),
\end{align*}
we see that $f$ is weakly differentiable and its weak derivative is $\pd_r f(r) = -g(r)$ a.e. $r \in (a,b)$.
Moreover we have
\begin{align*}
	\int_a^b |\pd_r f(r)| \,dr &= H_p(\re^N) \int_a^b r^{N-p-1} |U(r)|^{p-1}\,dr < \infty.
\end{align*}
Therefore $f \in W^{1,1}(a,b)$ which yields that $f$ is absolutely continuous on $(a,b)$. 
Since $a,b >0$ are arbitrary, we see that $U \in C^1(0, \infty)$.
\end{proof}

The first main result of this section is Theorem \ref{Thm:H_p attainability} below which concerns the sharpness of the constant in the anisotropic subcritical Hardy inequality given in Theorem \ref{Thm:H_p}
\begin{theorem}
\label{Thm:H_p attainability}
Let $\Omega$ be a domain in $\re^N$ with $0 \in \Omega$ and $1 \le p < N$. Then $H_p(\Omega)$ in \eqref{H_p} is $H_p(\Omega) = \( \frac{N-p}{p} \)^p$.
Moreover $H_p(\Omega)$ is not attained if $1 < p < N$. 
On the other hand, $H_1(\Omega) = N-1$ is attained by any nonnegative function which is $H^0$-radially decreasing on its support.
\end{theorem}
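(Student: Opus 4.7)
The lower bound $H_p(\Omega) \ge \left(\frac{N-p}{p}\right)^p$ is exactly Theorem \ref{Thm:H_p}, so the sharpness claim reduces to producing test functions that saturate this constant. Since $0 \in \Omega$, fix $r_0 > 0$ with $\mathcal{W}_{r_0} \subset \Omega$, and take a cutoff $\phi \in C_0^\infty([0,r_0))$ that equals $1$ on $[0, r_0/2]$. For small $\epsilon > 0$, I would set
\[
  u_\epsilon(x) := (H^0(x))^{-(N-p)/p + \epsilon}\, \phi(H^0(x)).
\]
Since $u_\epsilon$ is $H^0$-radial, Proposition \ref{Prop:spherical} applied with appropriate weights reduces both integrals in \eqref{H_p} to weighted one-dimensional integrals. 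A direct computation, using $\frac{x}{H^0(x)} \cdot \nabla u_\epsilon = U'_\epsilon(H^0(x))$ (by Proposition \ref{Prop:identities} (1')), yields
\[
  \int_\Omega \frac{|u_\epsilon|^p}{(H^0)^p}\, dx = \omega_{N-1}\int_0^{r_0} r^{p\epsilon - 1}\phi(r)^p\, dr,
\]
while the numerator, after expanding $U'_\epsilon$, has leading term $\bigl(\tfrac{N-p}{p} - \epsilon\bigr)^p\, \omega_{N-1} \int_0^{r_0} r^{p\epsilon - 1}\phi(r)^p\, dr$ plus terms involving $\phi'$, which are supported in $[r_0/2, r_0]$ and hence uniformly bounded in $\epsilon$. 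Since $\int_0^{r_0/2} r^{p\epsilon-1}dr \sim \frac{1}{p\epsilon} \to \infty$, the bounded remainders are negligible, and the ratio tends to $\left(\frac{N-p}{p}\right)^p$ as $\epsilon \to 0^+$.

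\textbf{Non-attainment for $1 < p < N$.} Suppose for contradiction a nontrivial minimizer $u \in W_0^{1,p}(\Omega)$ exists. Extending by zero and applying Proposition \ref{Prop:spherical} produces an $H^0$-radial minimizer $u(x) = U(H^0(x)) \in W_0^{1,p}(\re^N)$, and Lemma \ref{lem:regularity} gives $U \in C^1(0,\infty)$. Passing to $H^0$-polar coordinates, the Hardy inequality becomes the one-dimensional weighted inequality
\[
  \left(\tfrac{N-p}{p}\right)^p \int_0^\infty |U|^p\, r^{N-p-1}\, dr \le \int_0^\infty |U'|^p\, r^{N-1}\, dr,
\]
and equality is assumed. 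Retracing the proof of Theorem \ref{Thm:H_p} for this $U$, equality forces equality in the Hölder step, i.e., $|U'(r)|$ is proportional to $|U(r)|/r$ with a definite sign, yielding $U(r) = c\, r^{-(N-p)/p}$. Such $U$ makes both sides of the 1D inequality divergent and does not lie in $W_0^{1,p}(\Omega)$, a contradiction.

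\textbf{Attainment for $p = 1$.} Let $u(x) = U(H^0(x)) \ge 0$ be $H^0$-radially decreasing with compact support. Then $\nabla u = U'(r)\nabla H^0(x)$ with $U' \le 0$, and using Proposition \ref{Prop:identities} (1') we obtain $\left|\frac{x}{H^0(x)} \cdot \nabla u\right| = -U'(r)$. By Proposition \ref{Prop:spherical} and integration by parts (the boundary term at infinity vanishes by compact support, and at $r=0$ it vanishes since $N \ge 2$ and $U$ is bounded near the origin),
\[
  \int_\Omega \left|\tfrac{x}{H^0(x)}\cdot \nabla u\right| dx = \omega_{N-1}\int_0^\infty (-U'(r)) r^{N-1} dr = (N-1)\omega_{N-1}\int_0^\infty U(r) r^{N-2} dr = (N-1)\int_\Omega \frac{u}{H^0(x)}\, dx,
\]
realizing the equality case. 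Combined with Step 1, this shows $H_1(\Omega) = N-1$ is attained.

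\textbf{Main obstacle.} The delicate point is the non-attainment argument: after symmetrization and the regularity given by Lemma \ref{lem:regularity}, one must carefully track the equality case through the Hölder inequality in the proof of Theorem \ref{Thm:H_p}, and rule out the resulting power function as an admissible element of $W_0^{1,p}(\Omega)$. The test-function computation in Step 1, while standard, also requires careful bookkeeping to confirm that cross terms involving $\phi'$ do not spoil the leading divergence $1/(p\epsilon)$.
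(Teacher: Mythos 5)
Your proposal tracks the paper's proof in all three parts: the sharpness comes from the same family of nearly-extremal powers $(H^0(x))^{-\alpha}$ with $\alpha \nearrow \frac{N-p}{p}$ supported in a small Wulff ball (the paper uses a piecewise-linear cutoff on $\delta < H^0(x) < 2\delta$ where you use a smooth one, which changes nothing), the $p=1$ attainment is the identical one-dimensional integration by parts, and the non-attainment starts with the same reduction: zero extension, symmetrization via Proposition \ref{Prop:spherical}, and regularity via Lemma \ref{lem:regularity}. The one place you genuinely diverge is the equality-case analysis, and there your shortcut has a gap. You propose to rerun the proof of Theorem \ref{Thm:H_p} on the minimizer and read off equality in H\"older; but that proof opens with an integration by parts valid for $C_0^\infty$ functions, whereas the radial minimizer $U$ is only $C^1$ away from the origin, is not compactly supported in $\re^N$, and is a priori as singular as $r^{-(N-p)/p}$ at $0$ — Proposition \ref{Prop:radial lemma} only shows the boundary quantity $|U(r)|^p r^{N-p}$ is \emph{bounded} as $r \to 0$, not that it vanishes, so the identity underlying the H\"older step (and hence "equality in H\"older") cannot be asserted without further work. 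The paper avoids this by substituting $v = (H^0(x))^{\frac{N-p}{p}} u$, applying the pointwise convexity inequality $|a+b|^p \ge |a|^p + p|a|^{p-2}ab$ (equality iff $b=0$), and reducing the cross term, via ${\rm div}\bigl( x (H^0(x))^{-N} \bigr) = 0$, to a single boundary term at infinity that is killed by the decay $V(R) \to 0$ furnished by Proposition \ref{Prop:radial lemma}; equality then forces $v$ constant, hence $U = c\, r^{-(N-p)/p} \notin W_0^{1,p}$, the same contradiction you reach. Your argument is repairable along exactly these lines, but as written the passage from "minimizer" to "equality in H\"older" is the missing step.
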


\begin{proof}
Let $\delta >0$ satisfy $\mathcal{W}_{2\delta} \subset \Omega$ and $\alpha < \frac{N-p}{p}$. 
Set
\begin{align*}
	\varphi_\alpha (x) = 
	\begin{cases}
	(H^0(x))^{-\alpha} \quad &\text{if} \,\,H^0(x) \le \delta, \\
	\delta^{-\alpha -1} (2\delta -H^0(x))  &\text{if} \,\, \delta < H^0(x) < 2 \delta, \\
	0 &\text{if} \,\,2 \delta \le H^0(x).
\end{cases}
\end{align*}
For $x$ with $H^0(x) \le \delta$, by Proposition \ref{Prop:identities}. (1'), we have 
\begin{align*}
	\frac{x}{H^0(x)} \cdot \nabla \varphi_\alpha (x) 
	= -\alpha (H^0(x))^{-\alpha -1} \frac{x}{H^0(x)} \cdot \nabla H^0(x)
	= -\alpha (H^0(x))^{-\alpha -1}
\end{align*}
and
\begin{align*}
	\intO \left| \frac{x}{H^0(x)} \cdot \nabla \varphi_\alpha \right|^p dx
	&=\alpha^p \int_{\mathcal{W}_\delta} (H^0(x))^{-\alpha p -p} dx + C(\delta ) \\
	&=\alpha^p \w_{N-1} \int_0^\delta r^{-\alpha p -p +N -1} dr + C(\delta ) \\
	&=\alpha^p \w_{N-1} (N-p-\alpha p)^{-1} \delta^{N-\alpha p -p} + C(\delta).
\end{align*}
This yields that
\begin{align}
\label{nume}
	&\intO \left| \frac{x}{H^0(x)} \cdot \nabla \varphi_\alpha \right|^p dx \notag \\
	&= \alpha^p \w_{N-1} p^{-1} \( \frac{N-p}{p} - \alpha \)^{-1} \delta^{N-\alpha p -p} + o\( \( \frac{N-p}{p} - \alpha \)^{-1} \) 
\end{align}
as $\alpha \nearrow \frac{N-p}{p}$. 
On the other hand, we have
\begin{align}
\label{deno}
	&\intO \frac{|\varphi_\alpha|^p}{(H^0(x))^p} dx = \int_{\mathcal{W}_\delta} (H^0(x))^{-\alpha p -p} dx + C(\delta ) \notag \\
	&=\w_{N-1} p^{-1} \( \frac{N-p}{p} - \alpha \)^{-1} \delta^{N-\alpha p -p} + o\( \( \frac{N-p}{p} - \alpha \)^{-1} \).
\end{align}
From (\ref{nume}), (\ref{deno}) and Theorem \ref{Thm:H_p}, we see that
\begin{align*}
\( \frac{N-p}{p} \)^p \le H_p(\Omega) \le \frac{\intO \left| \frac{x}{H^0(x)} \cdot \nabla \varphi_\delta \right|^p dx}{\intO \frac{|\varphi_\alpha |^p}{(H^0(x))^p} dx} = \alpha^p + o(1) = \( \frac{N-p}{p} \)^p + o(1).
\end{align*}
Hence $H_p(\Omega)=(\frac{N-p}{p} )^p$.

Next we shall show the attainability of $H_1(\Omega)$.
For any $u \in C_0^\infty (\Omega)$, there exists $R > 0$ such that ${\rm supp} (u) \subset \mathcal{W}_R$.
Since $u$ is nonnegative $H^0$-radially decreasing function on its support, we obtain
\begin{align*}
	&\intO \frac{|u|}{H^0(x)} dx = \int_{\mathcal{W}_R} \frac{u(x)}{H^0(x)} dx 
	=\w_{N-1} \int_0^R u(r) r^{N-2} dr \\
	&= -\frac{\w_{N-1}}{N-1} \int_0^R u'(r) r^{N-1} dr =\frac{1}{N-1} \intO \left| \frac{x}{H^0(x)} \cdot \nabla u \right| dx.
\end{align*}
Therefore we see that $H_1(\Omega) = N-1$ is attained by $u$.

Finally we show the non-attainability of $H_p(\Omega)$ when $1<p<N$.
Assume by contradiction that $\tilde{u} \in W_0^{1,p}(\Omega)$ is a minimizer of $H_p(\Omega) = (\frac{N-p}{p})^p = H_p(\re^N)$. 
Then by zero-extension there exists a minimizer $\overline{u} \in W_0^{1,p}(\re^N)$ of $H_p(\re^N)$. 
By Proposition \ref{Prop:spherical}, there also exists a $H^0$-radial minimizer $u \in W_0^{1,p}(\re^N)$ of $H_p(\re^N)$. 
Write $u(x) = U(H^0(x))$.
From Lemma \ref{lem:regularity} we see that $u \in C^1(\re^N \setminus \{ 0\})$.
Now we set
\[
	J(u) = \intRN \left| \frac{x}{H^0(x)} \cdot \nabla u \right|^p dx - \( \frac{N-p}{p} \)^p \intRN \frac{|u|^p}{(H^0(x))^p} dx
\]
and consider $v(x) = V(H^0(x))= (H^0(x))^{\frac{N-p}{p}} U(H^0(x))$.
Note that $\lim_{r \to \infty} |V(r)| =0$ from Proposition \ref{Prop:radial lemma}. 
Indeed
\begin{align*}
	\lim_{r \to \infty} |V(r)| = \lim_{r \to \infty} r^{\frac{N-p}{p}}|U(r)| \le C \lim_{r \to \infty} \| H( \nabla u)\|_{L^p(\re^N \setminus \mathcal{W}_r)} =0.
\end{align*}
Since
\[
	\nabla u(x) = -\frac{N-p}{p} (H^0(x))^{-\frac{N}{p}} \nabla H^0(x) v(x) + (H^0(x))^{-\frac{N-p}{p}} \nabla v(x),
\]
we have
\begin{equation}
\label{radial H_p}
	\left| \frac{x}{H^0(x)} \cdot \nabla u \right|^p 
	= \left| -\frac{N-p}{p} (H^0(x))^{-\frac{N}{p}} v(x) + (H^0(x))^{-\frac{N-p}{p}} \frac{x}{H^0(x)} \cdot \nabla v(x) \right|^p.
\end{equation}
By recalling the inequality $|a+b|^p \ge |a|^p + p|a|^{p-2}a b$, $(p>1, a,b \in \re)$ and that the equality holds iff $b=0$, 
we see
\begin{align*}
	&\left| \frac{x}{H^0(x)} \cdot \nabla u \right|^p \\
	&\ge \( \frac{N-p}{p} \)^p (H^0(x))^{-N} |v|^p - p \( \frac{N-p}{p} \)^{p-1} |v|^{p-2} v \nabla v \cdot \frac{x}{H^0(x)} (H^0(x))^{-N+1}.
\end{align*}
Therefore we have
\begin{align*}
	J(u) &\ge \( \frac{N-p}{p} \)^p \intRN \frac{|v|^p}{(H^0(x))^N} dx - \( \frac{N-p}{p} \)^{p-1} \intRN \nabla ( |v|^p ) \cdot \frac{x}{(H^0(x))^N} dx \\
	&\hspace{10em}-\( \frac{N-p}{p} \)^p \intRN \frac{|u|^p}{(H^0(x))^p} dx. \\
	&= -\lim_{R \to \infty}\( \frac{N-p}{p} \)^{p-1} \int_{\mathcal{W}_R} \nabla ( |v|^p ) \cdot \frac{x}{(H^0(x))^N} dx \\
	&= -\lim_{R \to \infty}\( \frac{N-p}{p} \)^{p-1} \frac{|V(R)|^p}{R^N} \int_{\pd \mathcal{W}_R} x \cdot \nu dS_x=0
\end{align*}
since $\lim_{R \to \infty} |V(R)| =0$, where $\nu$ is an outer normal vector and we have used the fact ${\rm div} \( \frac{x}{(H^0(x))^N} \) = 0$.
Since $u$ is a $H^0$-radial minimizer, $J(u)=0$, which implies
\[  
	(H^0(x))^{-\frac{N-p}{p}} \frac{x}{H^0(x)} \cdot \nabla v(x) = 0
\]
by \eqref{radial H_p}.
This yields that $v(x)$ is a constant and $u(x) = c (H^0(x))^{-\frac{N-p}{p}}$ for some $c \in \re$ for $x \in \re^N \setminus \{ 0\}$. 
However $(H^0(x))^{-\frac{N-p}{p}} \not\in W_0^{1,p}(\re^N)$. This is a contradiction. 
Hence $H_p(\Omega)$ is not attained if $1 < p < N$.
\end{proof}

\begin{theorem}
\label{Thm:H_N attainability}
Let $\Omega$ be a bounded domain in $\re^N \,(N\ge 2)$, $0 \in \Omega$ and $R=\sup_{x \in \Omega} H^0(x)$. 
Then
\begin{equation*}
	H_N(\Omega) := \inf_{0 \not\equiv u \in W_0^{1,N}(\Omega)} 
	\frac{\displaystyle\intO \left| \frac{x}{H^0(x)} \cdot \nabla u \right|^N dx}{\displaystyle\intO \frac{|u|^N}{H^0(x)^N (\log \frac{R}{H^0(x)})^N} dx} 
	= \( \frac{N-1}{N} \)^N.
\end{equation*}
Moreover $H_N(\Omega)$ is not attained.
\end{theorem}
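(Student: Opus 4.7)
The plan is to mirror the two-stage argument of Theorem \ref{Thm:H_p attainability}. For sharpness, I would test against functions concentrated near the origin. Fix $\delta>0$ with $\mathcal{W}_{2\delta}\subset\Omega$, a smooth cutoff $\eta$ with $\eta\equiv 1$ on $[0,\delta]$ and $\eta\equiv 0$ on $[2\delta,\infty)$, and for $0<\alpha<(N-1)/N$ set
\[
	\varphi_\alpha(x)=\eta(H^0(x))\,\bigl(\log\tfrac{R}{H^0(x)}\bigr)^{\alpha}.
\]
Using Proposition \ref{Prop:identities}(1'), on $\mathcal{W}_\delta$ one has $\frac{x}{H^0(x)}\cdot\nabla\varphi_\alpha = -\alpha(\log R/H^0)^{\alpha-1}/H^0$. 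Passing to anisotropic polar coordinates (as in Proposition \ref{Prop:spherical}) and setting $s=\log(R/r)$, both integrals in the Rayleigh quotient restricted to $\mathcal{W}_\delta$ reduce to a constant multiple of $\omega_{N-1}\int_{\log(R/\delta)}^{\infty} s^{N\alpha-N}\,ds$ and diverge at the same rate as $\alpha\nearrow(N-1)/N$, while the transition annulus contributes $O(1)$. Hence the Rayleigh quotient tends to $\alpha^N\to((N-1)/N)^N$, which together with Theorem \ref{Thm:H_N} yields $H_N(\Omega)=((N-1)/N)^N$.

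For non-attainability, suppose by contradiction that a minimizer $\tilde u\in W_0^{1,N}(\Omega)$ exists. Extending by zero produces a minimizer on $\mathcal{W}_R$, and Proposition \ref{Prop:spherical} provides an $H^0$-radial minimizer $u(x)=U(H^0(x))$; a bootstrap analogous to Lemma \ref{lem:regularity} gives $U\in C^1(0,R)$. Set $v(x)=V(H^0(x))$ with $V(r)=(\log R/r)^{-(N-1)/N}U(r)$, so that Proposition \ref{Prop:radial lemma} yields $V(R)=0$ and $V$ is bounded on $(0,R)$. A direct calculation decomposes
\[
	\frac{x}{H^0(x)}\cdot\nabla u = a+b,\quad a=-\tfrac{N-1}{N}\frac{(\log R/H^0)^{-1/N}}{H^0}v,\quad b=(\log R/H^0)^{(N-1)/N}\tfrac{x}{H^0}\cdot\nabla v,
\]
and one checks $|a|^N=\bigl(\tfrac{N-1}{N}\bigr)^N|u|^N/\bigl((H^0)^N(\log R/H^0)^N\bigr)$. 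Applying the convexity inequality $|a+b|^N\ge|a|^N+N|a|^{N-2}ab$ (equality iff $b=0$) and integrating over $\mathcal{W}_R\setminus\mathcal{W}_\varepsilon$, the mixed term becomes $-\bigl(\tfrac{N-1}{N}\bigr)^{N-1}\int_{\mathcal{W}_R\setminus\mathcal{W}_\varepsilon}\nabla(|v|^N)\cdot x/(H^0)^N\,dx$; exploiting radiality and the identity $N|V|^{N-2}VV'=\frac{d}{dr}|V|^N$, this evaluates to $\bigl(\tfrac{N-1}{N}\bigr)^{N-1}N\kappa_N\bigl(|V(\varepsilon)|^N-|V(R)|^N\bigr)=\bigl(\tfrac{N-1}{N}\bigr)^{N-1}N\kappa_N|V(\varepsilon)|^N$.

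Therefore, with
\[
	J_\varepsilon(u):=\int_{\mathcal{W}_R\setminus\mathcal{W}_\varepsilon}\bigl|\tfrac{x}{H^0}\cdot\nabla u\bigr|^N dx - \bigl(\tfrac{N-1}{N}\bigr)^N\int_{\mathcal{W}_R\setminus\mathcal{W}_\varepsilon}\frac{|u|^N}{(H^0)^N(\log R/H^0)^N}\,dx,
\]
one obtains $J_\varepsilon(u)\ge\bigl(\tfrac{N-1}{N}\bigr)^{N-1}N\kappa_N|V(\varepsilon)|^N$. Since $u$ is a minimizer, $J_\varepsilon(u)\to 0$ as $\varepsilon\to 0$, which forces $|V(\varepsilon)|\to 0$ and, by the equality case of the convexity inequality, $b\equiv 0$, i.e., $V'\equiv 0$. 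Thus $V$ is constant, and combined with $V(0^+)=0$ this forces $V\equiv 0$, hence $u\equiv 0$, a contradiction. I expect the main obstacle to be justifying the radial reduction for the Rayleigh quotient featuring $(x/H^0)\cdot\nabla u$ rather than $H(\nabla u)$---which is precisely where Proposition \ref{Prop:spherical} is used---and ensuring the $C^1$-regularity of the radial minimizer in this critical scale so that the fundamental-theorem computation above is rigorous; once these are in place, the remaining steps are essentially algebraic.
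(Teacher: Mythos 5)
Your proposal is correct and follows essentially the same route as the paper: the same family of test functions $(\log \frac{R}{H^0})^{\alpha}$ for sharpness, and for non-attainability the same reduction to an $H^0$-radial $C^1$ minimizer via Propositions \ref{Prop:spherical} and \ref{Prop:radial lemma}, the same substitution $V=(\log \frac{R}{H^0})^{-\frac{N-1}{N}}U$, and the same convexity inequality $|a+b|^N\ge |a|^N+N|a|^{N-2}ab$. The only (harmless, indeed slightly more careful) deviations are that you excise $\mathcal{W}_{\varepsilon}$ and track the inner boundary term $|V(\varepsilon)|^N$ explicitly, and that you reach the contradiction via $V$ constant together with $V(0^+)=0$ forcing $u\equiv 0$, whereas the paper concludes from $V$ constant that $u=c\,(\log \frac{R}{H^0})^{\frac{N-1}{N}}\notin W_0^{1,N}(\mathcal{W}_R)$.
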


\begin{proof}
Let $\delta >0$ satisfy $\mathcal{W}_{2\delta} \subset \Omega$ and $\alpha < \frac{N-1}{N}$. Set
\begin{align*}
\varphi_\alpha (x) = 
\begin{cases}
\( \log \frac{R}{H^0(x)} \)^{\alpha} \quad &\text{if} \,\,H^0(x) \le \delta, \\
\( \log \frac{R}{\delta} \)^{\alpha -1} (2\delta -H^0(x))  &\text{if} \,\, \delta < H^0(x) < 2 \delta, \\
0 &\text{if} \,\,2 \delta \le H^0(x).
\end{cases}
\end{align*}
Since for $x$ such that $H^0(x) \le \delta$, by Proposition \ref{Prop:identities}. (1') we have 
\begin{align*}
\frac{x}{H^0(x)} \cdot \nabla \varphi_\alpha (x) 
	&= -\alpha \( \log \frac{R}{H^0(x)} \)^{\alpha -1} \frac{1}{H^0(x)},
\end{align*}
and
\begin{align}\label{nume H_N}
	&\intO \left| \frac{x}{H^0(x)} \cdot \nabla \varphi_\alpha \right|^N dx \notag \\
	&=\alpha^N \int_{\mathcal{W}_\delta} \( \log \frac{R}{H^0(x)} \)^{\alpha N -N} \frac{1}{(H^0(x))^N} dx + C(\delta ) \notag \\
	&=\alpha^N \w_{N-1} \int_0^\delta \( \log \frac{R}{r} \)^{\alpha N -N} \frac{dr}{r} + C(\delta ) \notag \\
	&= \alpha^N \w_{N-1} N^{-1} \( \frac{N-1}{N} - \alpha \)^{-1} \( \log \frac{R}{\delta} \)^{\alpha N -N+1} + o\( \( \frac{N-1}{N} - \alpha \)^{-1} \) 
\end{align}
as $\alpha \nearrow \frac{N-1}{N}$. On the other hand, we have
\begin{align}\label{deno H_N}
	&\intO \frac{|\varphi_\alpha |^N}{(H^0(x))^N (\log \frac{R}{H^0(x)})^N} dx 
	= \int_{\mathcal{W}_\delta} \( \log \frac{R}{H^0(x)} \)^{\alpha N -N} \frac{1}{(H^0(x))^N} dx + C(\delta ) \notag \\
	&=\w_{N-1} N^{-1} \( \frac{N-1}{N} - \alpha \)^{-1} \( \log \frac{R}{\delta} \)^{\alpha N -N+1} + o\( \( \frac{N-1}{N} - \alpha \)^{-1} \).
\end{align}
From (\ref{nume H_N}), (\ref{deno H_N}) and Theorem \ref{Thm:H_N}, we see that
\begin{align*}
	\( \frac{N-1}{N} \)^N \le H_N(\Omega) 
	\le \disp{ \frac{\intO \left| \frac{x}{H^0(x)} \cdot \nabla \varphi_\alpha \right|^N dx}{\intO \frac{|\varphi_\alpha |^N}{(H^0(x))^N (\log \frac{R}{H^0(x)})^N} dx} }
	= \alpha^N + o(1) 
	= \( \frac{N-1}{N} \)^N + o(1).
	\end{align*}
Hence $H_N=(\frac{N-1}{N})^N$.

Next we shall show the non-attainability of $H_N(\Omega)$ by a contradiction. 
Assume there exists a minimizer $\tilde{u} \in W_0^{1,N}(\Omega)$ of $H_N(\Omega)$. 
Then by zero-extension there exists a minimizer $\overline{u} \in W_0^{1,N}(\mathcal{W}_R)$ of $H_N(\mathcal{W}_R)$, where $R=\sup_{x \in \Omega} H^0(x)$. 
By Proposition \ref{Prop:spherical}, there also exists a $H^0$-radial minimizer $u \in W_0^{1,N}(\mathcal{W}_R)$ of $H_N(\mathcal{W}_R)$. 
Write $u(x) = U(H^0(x))$.
We see that $u \in C^1(\mathcal{W}_R \setminus \{ 0\})$ in the same way as Lemma 2.4 in \cite{Ioku-Ishiwata}. 
Now we set
\[
	J(u) = \int_{\mathcal{W}_R} \left| \frac{x}{H^0(x)} \cdot \nabla u \right|^N dx - \( \frac{N-1}{N} \)^N \int_{\mathcal{W}_R} \frac{|u|^N}{H^0(x)^N (\log \frac{R}{H^0(x)})^N} dx
\]
and consider $v(x) = V(H^0(x)) = (\log \frac{R}{H^0(x)})^{-\frac{N-1}{N}} U(H^0(x))$. 
Note that $|V(R)| =\lim_{r \to R} |V(r)| =0$ from Proposition \ref{Prop:radial lemma}. 
Indeed
\begin{align*}
	\lim_{r \to R} |V(r)| = \lim_{r \to R} \( \log \frac{R}{r} \)^{-\frac{N-1}{N}}|U(r)| \le C \lim_{r \to R} \| H( \nabla u)\|_{L^N(\mathcal{W}_R \setminus \mathcal{W}_r)} =0.
\end{align*}
Since
\[
	\nabla u(x) = \frac{N-1}{N} \( \log \frac{R}{H^0(x)} \)^{-\frac{1}{N}} \frac{\nabla H^0(x)}{H^0(x)} v(x) + \( \log \frac{R}{H^0(x)} \)^{-\frac{N-1}{N}} \nabla v(x),
\]
we have
\begin{align}
\label{radial H_N}
	&\left| \frac{x}{H^0(x)} \cdot \nabla u \right|^N \\
	&= \left| \frac{N-1}{N} \( \log \frac{R}{H^0(x)} \)^{-\frac{1}{N}} \frac{v(x)}{H^0(x)} + \( \log \frac{R}{H^0(x)} \)^{\frac{N-1}{N}} \frac{x}{H^0(x)} \cdot \nabla v(x) \right|^N. \notag
\end{align}
By recalling the inequality $|a+b|^N \ge |a|^N + N|a|^{N-2}a b$, $(N>1, a,b \in \re)$ and that the equality holds iff $b=0$, 
we see
\begin{align*}
	\left| \frac{x}{H^0(x)} \cdot \nabla u \right|^N 
	&\ge \( \frac{N-1}{N} \)^N \frac{|v|^N}{(H^0(x))^N} \( \log \frac{R}{H^0(x)} \)^{-1} \\
	&- N \( \frac{N-1}{N} \)^{N-1} |v|^{N-2} v \nabla v \cdot \frac{x}{H^0(x)} (H^0(x))^{-N+1}.
\end{align*}
Therefore we have
\begin{align*}
	J(u) &\ge \( \frac{N-1}{N} \)^N \int_{\mathcal{W}_R} \frac{|v|^N}{(H^0(x))^N} \( \log \frac{R}{H^0(x)} \)^{-1} dx \\
	&- \( \frac{N-1}{N} \)^{N-1} \int_{\mathcal{W}_R} \nabla (|v|^N ) \cdot \frac{x}{(H^0(x))^N} dx \\
	&- \( \frac{N-1}{N} \)^N \int_{\mathcal{W}_R} \frac{|u|^N}{H^0(x)^N (\log \frac{R}{(H^0(x))})^N} dx \\
	&=-  \( \frac{N-1}{N} \)^{N-1} \frac{|V(R)|^N}{R^N} \int_{\pd \mathcal{W}_R} x \cdot \nu dS_x = 0
\end{align*}
by $|V(R)| = 0$.
Since $u$ is a $H^0$-radial minimizer, $J(u)=0$, which implies
\[  
	 \( \log \frac{R}{H^0(x)} \)^{\frac{N-1}{N}} \frac{x}{H^0(x)} \cdot \nabla v(x)=0
\]
by \eqref{radial H_N}.
This in turn yields that $v(x)$ is a constant and  $u(x)=c \( \log \frac{R}{H^0(x)} \)^{\frac{N-1}{N}}$ for some $c \in \re$ for $x \in \mathcal{W}_R \setminus \{ 0\}$. 
However $\( \log \frac{R}{H^0(x)} \)^{\frac{N-1}{N}} \not\in W_0^{1,N}(\mathcal{W}_R)$. 
This is a contradiction and $H_N(\Omega)$ is not attained.
\end{proof}

%
%

\begin{theorem}
\label{Thm:H_p-half best constant}
Let $N \ge 2$ and $1 < p < \infty$. Define $d_H$ as in \eqref{d_H}.
Then
\begin{equation*}
	C_p(\re^N_{+}) := \inf_{0 \not\equiv u \in W_0^{1,p}(\re^N_{+})} \frac{\displaystyle\int_{\re^N_{+}} |\nabla u \cdot (\nabla_{\xi} H)(\nabla d_H)|^p dx}{\displaystyle\int_{\re^N_{+}} \frac{|u|^p}{(d_H(x))^p} dx} 
	= \( \frac{p-1}{p} \)^p.
\end{equation*}
\end{theorem}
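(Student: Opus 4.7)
The plan is to combine Corollary \ref{cor:geometric H_p}, which already yields the lower bound $C_p(\re^N_+) \ge \( \frac{p-1}{p} \)^p$ (since $\re^N_+$ is convex, so that $-\Delta_{H,p} d_H \ge 0$ in the distributional sense), with an explicit minimizing sequence realizing this constant.

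\textbf{Step 1.} First I would identify $d_H$ explicitly on the half-space. By positive $1$-homogeneity of $H^0$ and translation invariance of $\pd \re^N_+$ in the first $N-1$ variables, for $x = (x', x_N) \in \re^N_+$,
\[
	d_H(x) = \inf_{z \in \re^{N-1}} H^0(z, x_N) = c_0\, x_N, \qquad c_0 := \inf_{z \in \re^{N-1}} H^0(z, 1).
\]
Writing $y = y_N(y'/y_N, 1)$ in the dual formula $H(e_N) = \sup_{H^0(y) \le 1} y_N$ (with $e_N$ the $N$-th coordinate vector) gives $c_0 = 1/H(e_N)$, consistently with the identity $H(\nabla d_H) = c_0\, H(e_N) = 1$. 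In particular $\nabla d_H = e_N / H(e_N)$ is a constant vector and, by Proposition \ref{Prop:identities}(2), $\nu := (\nabla_\xi H)(\nabla d_H) = (\nabla_\xi H)(e_N)$, while $\nu \cdot e_N = H(e_N)$ by Proposition \ref{Prop:identities}(1).

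\textbf{Step 2.} Next I would construct a minimizing sequence of product form. Fix $f \in C_0^\infty(0, \infty)$ and $\phi \in C_0^\infty(\re^{N-1})$ with $\phi \equiv 1$ on the unit ball, and set $u_R(x) := f(x_N)\,\phi(x'/R) \in C_0^\infty(\re^N_+)$. Then
\[
	\nabla u_R \cdot \nu = H(e_N)\, f'(x_N)\, \phi(x'/R) + R^{-1} f(x_N)\, (\nabla'\phi)(x'/R) \cdot \nu'.
\]
Applying the elementary bound $|a+b|^p \le (1+\varepsilon)|a|^p + C_\varepsilon |b|^p$ and Fubini, and using $d_H^p = x_N^p / H(e_N)^p$ so that the factor $H(e_N)^p$ cancels between numerator and denominator, produces
\[
	\frac{\int_{\re^N_+} |\nabla u_R \cdot \nu|^p\,dx}{\int_{\re^N_+} |u_R|^p / d_H^p\,dx}
	\le (1+\varepsilon)\,\frac{\int_0^\infty |f'|^p\,dx_N}{\int_0^\infty |f|^p / x_N^p\,dx_N} + O(R^{-p}).
\]
Sending $R \to \infty$ and then $\varepsilon \to 0$, and invoking the sharp one-dimensional Hardy inequality on $(0, \infty)$ (whose best constant is $\( \frac{p-1}{p} \)^p$), yields the matching upper bound $C_p(\re^N_+) \le \( \frac{p-1}{p} \)^p$.

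\textbf{Main obstacle.} The only delicate step is showing that the transverse-gradient term from the cutoff is negligible. The scaling $\phi_R = \phi(\cdot/R)$ is tuned precisely so that this term carries an $R^{-1}$ factor while every bulk integral scales as $R^{N-1}$, producing the harmless $O(R^{-p})$ remainder. Apart from this, once $d_H$ is identified as a scalar multiple of $x_N$ and $(\nabla_\xi H)(\nabla d_H)$ as the constant vector $\nu$, the Finsler structure enters only through the positive scalar $H(e_N)$ that cancels on both sides, and the statement reduces to the classical one-dimensional Hardy inequality.
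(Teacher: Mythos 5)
Your proof is correct, and the upper-bound construction is genuinely different from the paper's. The paper keeps the test functions localized in a \emph{fixed} cube $Q_{2R}$ and instead drives the exponent in $u_\alpha=\eta\, (d_H)^{\alpha}$ down to $\tfrac{p-1}{p}$, so that $\int (d_H)^{p(\alpha-1)}dx$ diverges like $(p(\alpha-1)+1)^{-1}$ and swamps all cutoff errors; notably, it never identifies $d_H$ exactly but only uses the two-sided comparison $\tfrac{1}{\alpha_2}x_N\le d_H(x)\le\tfrac{1}{\alpha_1}x_N$. You instead exploit the exact identities $d_H(x)=x_N/H(e_N)$ and $(\nabla_\xi H)(\nabla d_H)=(\nabla_\xi H)(e_N)$ (both correctly derived from Proposition \ref{Prop:identities} and the duality $H=(H^0)^0$), separate variables with $u_R=f(x_N)\phi(x'/R)$, and let $R\to\infty$ so that the transverse-gradient term is $O(R^{-p})$ relative to the bulk integrals of order $R^{N-1}$; the problem then collapses to the sharpness of the classical one-dimensional Hardy inequality on $(0,\infty)$. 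What your route buys is a clean structural explanation of \emph{why} the Finsler data drop out (everything reduces to the scalar $H(e_N)$, which cancels); what the paper's route buys is self-containedness (it re-derives the one-dimensional near-extremal behaviour rather than citing it) and immediate portability to domains having only a bounded flat boundary portion (see the remark following the theorem), where one cannot send $R\to\infty$ — though your argument could be adapted there by concentrating $f$ near $x_N=0$ instead. Two minor housekeeping points common to both proofs: the infimum is taken over $W_0^{1,p}(\re^N_{+})$ while the lower bound from Corollary \ref{cor:geometric H_p} is stated for $C_0^\infty$, so a density remark is needed; and one should note that $\inf_{z}H^0(z,1)>0$ is attained (coercivity of $H^0$), so $c_0$ is a genuine positive constant.
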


\begin{proof}
The inequality (\ref{H_p half}) implies that $C_p(\re^N_{+}) \ge \( \frac{p-1}{p} \)^p$.
For $R > 0$, let 
\begin{equation}
\label{QR}
	Q_R = \{ x = (x^{\prime}, x_N) \in \re^N_{+} \, | \, |x^{\prime}| < R,  0 < x_N < R \}
\end{equation}
be an open cube and
let $\eta$ be a smooth cut-off function with $0 \le \eta \le 1$, $\eta \equiv 1$ on $Q_R$, $\eta \equiv 0$ on $(Q_{2R})^c$. 
For $\alpha > \frac{p-1}{p}$, put $u_{\alpha}(x) = \eta(x) (d_H(x))^\alpha$.
Then $u_{\alpha} \in W^{1,p}_0(\re^N_{+} \cap Q_{2R})$ and
\begin{align*}
	\nabla u_{\alpha} = \alpha \eta (d_H)^{\alpha-1} \nabla d_H + (\nabla \eta) (d_H)^{\alpha}.
\end{align*}
Thus 
\begin{align*}
	&\nabla u_{\alpha} = \alpha (d_H)^{\alpha-1} \nabla d_H \quad \text{on} \quad \re^N_{+} \cap Q_R, \\
	&\int_{\re^N_{+} \cap Q_R} H^p(\nabla u_{\alpha}) dx = \alpha^p \int_{\re^N_{+} \cap Q_R} (d_H(x))^{p(\alpha-1)} dx, \\
	&\int_{\re^N_{+} \cap Q_R} \frac{|u_{\alpha}|^p}{(d_H(x))^p} dx = \int_{\re^N_{+} \cap Q_R} (d_H(x))^{p(\alpha-1)} dx.
\end{align*}
Now, since the inequality $\frac{1}{\alpha_2} |x| \le H^0(x) \le \frac{1}{\alpha_1} |x|$ holds, we have
$\frac{1}{\alpha_2} d_E(x) \le d_H(x) \le \frac{1}{\alpha_1} d_E(x)$, 
where $d_E(x) = x_N$ denotes the Euclidean distance of $x \in \re^N_{+}$ from the boundary: $d_E(x) = \inf_{y \in \pd \re^N_{+}} |x-y|$. 
Since
\[
	\int_{\re^N_{+} \cap Q_R} x_N^{p(\alpha-1)} dx = \int_{|x^{\prime}| < R} \int_0^R  x_N^{p(\alpha-1)} dx_N dx^{\prime} = C(R) \frac{R^{p(\alpha-1) + 1}}{p(\alpha-1) + 1},
\]
where $C(R) = \int_{|x^{\prime}| < R} dx^{\prime}$,
we have
\[
	\int_{\re^N_{+} \cap Q_R} (d_H(x))^{p(\alpha-1)} dx = O\( \frac{1}{p(\alpha-1) + 1} \) \quad \text{as} \quad \alpha \searrow \frac{p-1}{p}
\]
for any fixed $R > 0$.
On the other hand, by the convexity of $H$ and the fact $H(\nabla d_H) = 1$, we have
\begin{align*}
	H(\nabla u_{\alpha}) &= H\( \alpha \eta (d_H)^{\alpha-1} \nabla d_H + (\nabla \eta) (d_H)^{\alpha} \) \\
	&\le H(\alpha \eta (d_H)^{\alpha-1} \nabla d_H) + H((d_H)^{\alpha} (\nabla \eta)) \\
	&\le \alpha (d_H)^{\alpha-1} + (d_H)^{\alpha} H(\nabla \eta).
\end{align*}
Since $(2R)^{p(\alpha-1) +1} - R^{p(\alpha-1) +1} \to 0$ as $\alpha \searrow \frac{p-1}{p}$,
we have
\begin{align*}
	&\int_{\re^N_{+} \cap (Q_{2R} \setminus Q_R)} x_N^{p(\alpha-1)} dx = \int_{R < |x^{\prime}| < 2R} \int_{R}^{2R}  x_N^{p(\alpha-1)} dx_N dx^{\prime} \\
	&= D(R) \frac{(2R)^{p(\alpha-1) +1} - R^{p(\alpha-1) +1}}{p(\alpha-1) + 1} = o(\frac{1}{p(\alpha-1)+1})
\end{align*}
as $\alpha \searrow \frac{p-1}{p}$ for any fixed $R>0$, where $D(R) = \int_{R < |x^{\prime}| < 2R} dx^{\prime}$.
Also we have
\begin{align*}
	&\int_{\re^N_{+} \cap (Q_{2R} \setminus Q_R)} x_N^{p\alpha} dx = \int_{R < |x^{\prime}| < 2R} \int_{R}^{2R}  x_N^{p\alpha} dx_N dx^{\prime} = O(1). 
\end{align*}
Then 
\begin{align*}
	&\int_{\re^N_{+} \cap (Q_{2R} \setminus Q_R)} (d_H(x))^{p(\alpha-1)} dx =  o(\frac{1}{p(\alpha-1)+1}), \\
	&\int_{\re^N_{+} \cap (Q_{2R} \setminus Q_R)} (d_H(x))^{p\alpha} dx =  O(1) 
\end{align*}
as $\alpha \searrow \frac{p-1}{p}$ for any fixed $R>0$ and thus
\begin{align*}
	&\int_{\re^N_{+} \cap (Q_{2R} \setminus Q_R)} H(\nabla u_{\alpha})^p dx \\
	&\le 2^{p-1} \alpha^p \int_{\re^N_{+} \cap (Q_{2R} \setminus Q_R)} (d_H)^{p(\alpha-1)} dx + 2^{p-1}  \sup_{x \in Q_{2R}} H^p(\nabla \eta(x)) \int_{\re^N_{+} \cap (Q_{2R} \setminus Q_R)} (d_H)^{p\alpha} dx \\
	&= o(\frac{1}{p(\alpha-1)+1}) + O(1).
\end{align*}
Therefore, we see
\begin{align*}
	C_p(\re^N_{+} \cap Q_{2R}) &\le \frac{\disp\int_{\re^N_{+} \cap Q_{2R}} |\nabla u_{\alpha} \cdot (\nabla_{\xi} H)(\nabla d_H)|^p dx}{\disp\int_{\re^N_{+} \cap Q_{2R}} \frac{|u_{\alpha}|^p}{(d_H(x))^p} dx} \\
	&\le \frac{\displaystyle\int_{\re^N_{+} \cap Q_R} H^p(\nabla u_{\alpha}) dx 
	+ \disp \int_{\re^N_{+} \cap (Q_{2R} \setminus Q_R)} H^p(\nabla u_{\alpha}) dx}{\disp\int_{\re^N_{+} \cap Q_R} \frac{|u_{\alpha}|^p}{(d_H(x))^p} dx} \\ 
	&= \frac{\alpha^p \disp\int_{\re^N_{+} \cap Q_R} (d_H(x))^{p(\alpha-1)} dx + \disp\int_{\re^N_{+} \cap (Q_{2R} \setminus Q_R)} H^p(\nabla u_{\alpha}) dx}{\disp\int_{\re^N_{+} \cap Q_R} (d_H(x))^{p(\alpha-1)} dx} \\
	&= \alpha^p + \frac{o(\frac{1}{p(\alpha-1)+1}) + O(1)}{O(\frac{1}{p(\alpha-1)+1})}
\end{align*}
as $\alpha \searrow \frac{p-1}{p}$.
Then taking the limit $\alpha \searrow \frac{p-1}{p}$, we have $C_p(\re^N_{+}) \le (\frac{p-1}{p})^p$.
Thus we have proven the result.
\end{proof}

\begin{remark}\rm
Let $\Omega \subset \re^N_{+}$ be a domain with a flat boundary portion on $\pd\re^N_{+}$,
that is,
\[
	\text{$Q_{4R} \subset \Omega$ for some $R > 0$}
\]
where $Q_{4R}$ be an open cube as in (\ref{QR}).
Then we have $C_p(\Omega) = \( \frac{p-1}{p} \)^p$.
Because for such domain, 
$d_E(x) = \inf_{y \in \pd\Omega} |x-y| = x_N$ for $x \in Q_{2R} \cap \Omega$
and the same proof as Theorem \ref{Thm:H_p-half best constant} works well.
\end{remark}

\begin{remark}\rm
Let $\Omega$ be a domain in $\re^N$ satisfying that $d_H$ is weakly twice differentiable and $-\Delta d_H \ge 0$ a.e.in $\Omega$,
where $d_H(x) = \inf_{y \in \pd\Omega} H^0(x-y)$.
Concerning the attainability of the best constant of (\ref{eq:geometric Finsler Hardy_d}), i.e.,
\begin{equation}
\label{C_p}
	C_p(\Omega) := \inf_{0 \not\equiv u \in C_0^{\infty}(\Omega)} \frac{\displaystyle\intO |\nabla u \cdot (\nabla_{\xi} H)(\nabla d_H)|^p dx}{\displaystyle\intO \frac{|u|^p}{(d_H(x))^p} dx}, 
\end{equation}
we will have the following observation.

First, for $u \in C_0^{\infty}(\Omega)$, define $v(x) = u(x) d_H^{-(\frac{p-1}{p})}$, $v$ is a Lipschitz function and $v = 0$ on $\pd\Omega$. 
We compute
\begin{align*}
	&\nabla u = \(\frac{p-1}{p}\) d_H^{-\frac{1}{p}} v(x) \nabla d_H + d_H^{\frac{p-1}{p}} \nabla v, \\
	&\nabla u \cdot (\nabla_{\xi} H)(\nabla d_H) = \(\frac{p-1}{p}\) d_H^{-\frac{1}{p}} v(x) \nabla d_H \cdot (\nabla_{\xi} H)(\nabla d_H) \\ 
	&\hspace{7em} + d_H^{\frac{p-1}{p}} \nabla v \cdot (\nabla_{\xi} H)(\nabla d_H).
\end{align*}
Thus
\begin{align*}
	&|\nabla u \cdot (\nabla_{\xi} H)(\nabla d_H)|^p = \Big|\(\frac{p-1}{p}\) d_H^{-\frac{1}{p}} v(x) + d_H^{\frac{p-1}{p}} \nabla v \cdot (\nabla_{\xi} H)(\nabla d_H) \Big|^p \\
	&\ge \(\frac{p-1}{p}\)^p d_H^{-1} |v|^p + p \(\frac{p-1}{p}\)^{p-1} |v|^{p-2} v \nabla v \cdot (\nabla_{\xi} H)(\nabla d_H) \\
	&= \(\frac{p-1}{p}\)^p \frac{|u|^p}{d_H^p} + \(\frac{p-1}{p}\)^{p-1} \nabla (|v|^p) \cdot (\nabla_{\xi} H)(\nabla d_H), \\
\end{align*}
where, as before, we have used the fact that $|a + b|^p \ge |a|^p + p|a|^{p-2}ab$ for $p > 1$ and $a, b \in \re$.
Note that the equality holds true if and only if $b = 0$.
Thus we have
\begin{align*}
	J(u) &:= \intO |\nabla u \cdot (\nabla_{\xi} H)(\nabla d_H)|^p dx - \( \frac{p-1}{p} \)^p \intO \frac{|u|^p}{(d_H)^p} dx \\
	&= \(\frac{p-1}{p}\)^{p-1} \intO \nabla (|v|^p) \cdot (\nabla_{\xi} H)(\nabla d_H) dx \\
	&= -\(\frac{p-1}{p}\)^{p-1} \intO |v|^p (\Delta_H d_H) dx,
\end{align*}
since $H(\nabla d_H) = 1$ a.e.
Therefore, if $J(u) = 0$ for some $u \in C_0^{\infty}(\Omega)$, then we must have
\begin{align*}
	\begin{cases}
	&\Delta_H d_H = 0 \quad a.e. \ \text{in} \, \Omega, \\
	&|\nabla v \cdot (\nabla_{\xi} H)(\nabla d_H)| = 0 \quad a.e. \ \text{in} \, \Omega,
	\end{cases}
\end{align*}
since we assume that $\Delta_H d_H \le 0$ a.e.in $\Omega$.
In particular, we can claim that 
{\it if $C_p(\Omega) = \( \frac{p-1}{p} \)^p$ and $-\Delta_H d_H > 0$ on a positive measure, then $C_p(\Omega)$ is not attained}. 
\end{remark}

Let $1 < p < \infty$.
In the Euclidean case (i.e., $H(\xi) = |\xi|$, $H^0(x) = |x|$ for $\xi, x \in \re^N$), 
the following facts are known \cite{MMP} \cite{MS(NA)}: 
\begin{itemize}
\item For any convex domain $\Omega$, $C_p(\Omega) = (\frac{p-1}{p})^p$.
\item For any domain $\Omega$ such that $\pd\Omega$ has a tangent hyperplane at least one point in $\pd\Omega$, $C_p(\Omega) \le (\frac{p-1}{p})^p$.
\item For any bounded $C^2$-domain $\Omega$, if $C_p(\Omega) < (\frac{p-1}{p})^p$ then $C_p(\Omega)$ is attained.
\item For any bounded $C^2$-domain $\Omega$, $C_2(\Omega) < \frac{1}{4}$ if and only if $C_2(\Omega)$ is attained.
\end{itemize}
It could be interesting to study corresponding results for the best constant of the geometric Finsler Hardy inequality (\ref{C_p}).


\section{The scale invariance of the anisotropic critical Hardy inequality}
\label{section:scale FH_N}

In this section,
we shall show that the anisotropic critical Hardy inequality (\ref{FH_N}) is invariant under the scaling 
\[	
	u_\la (x) = \la^{-\frac{N-1}{N}} u\( \( \frac{H^0(x)}{R}\)^{\la -1} x \), \quad (\la >0)
\]
when $\Omega =\mathcal{W}_R$. 
In order to show that, we need the following lemma.
\begin{lemma}\label{Lem:scale}
Let $c > 0$ and $a \in \re$.
For $y \in \re^N$, let $x = c H^0(y)^a y$. Then the Jacobian of the transformation $y \mapsto x$ is
\[
	\left| {\rm det} \( \frac{\pd (x_1, \cdots , x_N)}{\pd (y_1, \cdots , y_N)} \) \right| = c^N (1+a) (H^0(y))^{aN}.
\]
\end{lemma}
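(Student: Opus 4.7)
The plan is a direct computation of the Jacobian, exploiting the fact that the differential of the map $y \mapsto x = c H^0(y)^a y$ is a rank-one perturbation of a scalar multiple of the identity, and then invoking the matrix determinant lemma.

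First, I would write out the partial derivatives using the product and chain rules:
\[
	\frac{\pd x_i}{\pd y_j} = c H^0(y)^a \delta_{ij} + c a H^0(y)^{a-1} y_i \frac{\pd H^0}{\pd y_j}(y),
\]
so that the Jacobian matrix has the form $J = \alpha I + u v^T$ with $\alpha = c H^0(y)^a$, $u = y$, and $v = c a H^0(y)^{a-1} \nabla H^0(y)$. Here I am using that $H^0$ is differentiable away from the origin (so the formula holds for $y \ne 0$, which is where the computation is needed).

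Next, I would apply the matrix determinant lemma in the form
\[
	\det(\alpha I + u v^T) = \alpha^{N-1}\bigl(\alpha + v \cdot u\bigr).
\]
To evaluate $v \cdot u$, I use Proposition \ref{Prop:identities} (1'), namely $\nabla_x H^0(y) \cdot y = H^0(y)$, which gives
\[
	v \cdot u = c a H^0(y)^{a-1}\bigl(\nabla H^0(y) \cdot y\bigr) = c a H^0(y)^{a}.
\]
Substituting back yields
\[
	\det J = \bigl(c H^0(y)^a\bigr)^{N-1}\bigl(c H^0(y)^a + c a H^0(y)^a\bigr) = c^N (1+a) H^0(y)^{aN},
\]
and taking absolute values produces the claimed formula (with the understanding that when $1+a<0$ the right-hand side should carry an absolute value, or equivalently one restricts to $1+a>0$, which is the regime relevant to the scaling in the previous section).

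There is essentially no obstacle here: the only non-obvious ingredient is the identity $\nabla H^0(y) \cdot y = H^0(y)$, which is just Euler's relation for the positively $1$-homogeneous function $H^0$ and is already recorded in Proposition \ref{Prop:identities}. The rest is linear algebra of rank-one updates.
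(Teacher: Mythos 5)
Your argument is correct, but it follows a different route from the proof the paper actually gives for Lemma \ref{Lem:scale}. The paper's proof is a ``polar coordinate'' computation: it writes $x = r\w$, $y = \rho\w$ with $r = H^0(x)$, $\rho = H^0(y)$, $\w \in \pd\mathcal{W}$, uses the homogeneity relation $r = c\rho^{a+1}$ to get $dr = c(a+1)\rho^a d\rho$, and then compares the volume elements $dx = r^{N-1}\,dr\,dS_{\w}$ and $dy = \rho^{N-1}\,d\rho\,dS_{\w}$; the Jacobian is read off from $dx = c^N(a+1)\rho^{aN}\,dy$. Your computation — writing the differential as $\alpha I + uv^T$ with $\alpha = cH^0(y)^a$, $u = y$, $v = c\,a\,H^0(y)^{a-1}\nabla H^0(y)$, applying the matrix determinant lemma, and closing with Euler's identity $\nabla H^0(y)\cdot y = H^0(y)$ — is precisely the alternative derivation the authors record in the remark immediately following the lemma, where they exhibit the eigenvalues $cH^0(y)^a$ (multiplicity $N-1$, eigenspace the orthogonal complement of $\nabla H^0(y)$) and $c(1+a)H^0(y)^a$ (multiplicity $1$, eigenspace $\re y$). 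The polar-coordinate proof has the advantage of working at the level of the measure decomposition $dx = r^{N-1}\,dr\,dS_\w$ already used elsewhere in the paper (e.g.\ in Proposition \ref{Prop:spherical}) and of sidestepping any pointwise differentiability of $H^0$ beyond what that decomposition requires; your linear-algebra route is more elementary and yields the full eigenstructure of the Jacobi matrix, not just its determinant. Your parenthetical about the sign of $1+a$ is also a legitimate observation: as stated the lemma implicitly assumes $1+a > 0$ (which holds in the application, where $a = \frac{1}{\la}-1 > -1$), and otherwise the right-hand side should read $c^N|1+a|(H^0(y))^{aN}$.
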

In the special case $H(\xi)=|\xi|$, Lemma \ref{Lem:scale} is shown by \cite{Ioku-Ishiwata}.

\begin{proof}
Let us assume $y \ne 0$. Then $x \ne 0$ and we may employ ``polar coordinate" $x = r \w$, $y = \rho \w$, where $r = H^0(x)$, $\rho = H^0(y)$ and $\w \in \pd \mathcal{W}$.
By homogeneity, we see $r = H^0(x) = c (H^0(y))^{a+1} = c \rho^{a+1}$, which implies $dr = c (a+1) \rho^a d\rho$.
Also we see $dx = r^{N-1} dr dS_{\w}$, $dy = \rho^{N-1} d\rho dS_{\w}$, where $dS_{\w}$ is an $(N-1)$-dimensional measure such that
\[
	\int_{\pd \mathcal{W}} dS_{\w} = P_H(\mathcal{W}; \re^N) = \omega_{N-1} = N \kappa_N.
\]
When $\pd \mathcal{W}$ is Lipschitz,
$dS_{\w}$ can be written $dS_{\w} = H(\nu(\w)) d\mathcal{H}^{N-1}$ where $\nu(\w) = \frac{\nabla H^0(\w)}{|\nabla H^0(\w)|}$ is an unit normal vector of $\pd \mathcal{W}$. 
Now, 
\begin{align*}
	dx &= r^{N-1} dr dS_{\w} = (c \rho^{a+1})^{N-1} \frac{dr}{d\rho} d\rho dS_{\w} \\
	&= c^N (a+1) (\rho^{a+1})^{N-1} \rho^a d\rho dS_{\w} = c^N (a+1) \rho^{aN} \rho^{N-1} d\rho dS_{\w} \\
	&=  c^N (a+1) \rho^{aN} dy.
\end{align*}
On the other hand, $dx = \left| {\rm det} \( \frac{\pd (x_1, \cdots , x_N)}{\pd (y_1, \cdots , y_N)} \) \right| dy$ by definition. 
Thus we have the conclusion.
\end{proof}

\begin{remark}\rm
By a direct calculation, we see that the Jacobi matrix
\begin{align*}
	A &= \( \frac{\pd x_i}{\pd y_j} \)_{1 \le i,j \le N} = c (H^0(y))^a \left[ {\rm Id.} +  \frac{a}{H^0(y)} B \right], \\ 
	B &= ( H^0_{y_j}(y) y_i )_{1 \le i,j \le N} 
\end{align*}
has eigenvalues
\begin{itemize}
\item $c (H^0(y))^a$ with multiplicity $N-1$, whose eigenspace is the orthogonal space of the vector $\nabla H^0(y)$, $y \ne 0$.
\item $c (1 + a) (H^0(y))^a$ with multiplicity $1$, whose eigenspace is $\re y$, $y \ne 0$.
\end{itemize}
Thus actually
\[
	{\rm det} A = {\rm det} \( \frac{\pd (x_1, \cdots , x_N)}{\pd (y_1, \cdots , y_N)} \) = c^N (1+a) (H^0(y))^{aN}.
\]
\end{remark}

Set $y= \( \frac{H^0(x)}{R}\)^{\la -1} x$, that is $x=R^{1-\frac{1}{\la}} (H^0(y))^{\frac{1}{\la}-1} y$.
Since 
\begin{align*}
&\frac{\pd u(y)}{\pd x_i} \frac{x_i}{H^0(x)} = \sum_{j=1}^N \frac{\pd u(y)}{\pd y_j} \frac{\pd y_j}{\pd x_i} \frac{x_i}{H^0(x)} \\
&=\sum_{j=1}^N \frac{\pd u(y)}{\pd y_j} \frac{x_i}{H^0(x)} R^{1-\la} \left[ (\la -1) (H^0(x))^{\la -2} H^0_{x_i}(x) x_j + (H^0(x))^{\la -1} \delta_{ij} \right] \\
&=R^{1-\la} (\la -1) (H^0(x))^{\la -2} H^0_{x_i}(x) \frac{x_i}{H^0(x)} ( \nabla_y u(y) \cdot x ) + R^{1-\la} (H^0(x))^{\la -1} \frac{\pd u(y)}{\pd y_i} \frac{x_i}{H^0(x)},
\end{align*}
we obtain
\begin{align*}
&\nabla_x u(y) \cdot \frac{x}{H^0(x)} \\
&=R^{1-\la} (\la -1) (H^0(x))^{\la -1} \( \nabla_y u(y) \cdot \frac{x}{H^0(x)} \) + R^{1-\la} (H^0(x))^{\la -1} \( \nabla_y u(y) \cdot \frac{x}{H^0(x)} \) \\
&=\la R^{1-\la} (H^0(x))^{\la -1} \( \nabla_y u(y) \cdot \frac{x}{H^0(x)} \) =\la R^{\frac{1}{\la}-1} (H^0(y))^{1-\frac{1}{\la}} \( \nabla_y u(y) \cdot \frac{y}{H^0(y)} \).
\end{align*}
Therefore we see that
\begin{align*}
&\int_{\mathcal{W}_R} \left| \frac{x}{H^0(x)} \cdot \nabla u_\la (x) \right|^N dx \\
&=\la^{-N+1} \int_{\mathcal{W}_R} \la^N R^{\frac{N}{\la}-N} (H^0(y))^{N-\frac{N}{\la}} \left| \frac{y}{H^0(y)} \cdot \nabla u(y) \right|^N {\rm det} \( \frac{\pd (x_1, \cdots , x_N)}{\pd (y_1, \cdots , y_N)} \) dy \\
&=\la R^{\frac{N}{\la}-N} \int_{\mathcal{W}_R} (H^0(y))^{N-\frac{N}{\la}} \left| \frac{y}{H^0(y)} \cdot \nabla u(y) \right|^N R^{N-\frac{N}{\la}} \frac{1}{\la} (H^0(y))^{\frac{N}{\la}-N} dy \\
&=\int_{\mathcal{W}_R} \left| \frac{y}{H^0(y)} \cdot \nabla u (y) \right|^N dy,
\end{align*}
where the second equality comes from Lemma \ref{Lem:scale}, on taking $c=R^{1-\frac{1}{\la}}$ and $a=\frac{1}{\la} -1$. In the same way as above, we see that
\begin{align*}
&\int_{\mathcal{W}_R} \frac{|u_\la (x)|^N}{(H^0(x))^N (\log \frac{R}{H^0(x)})^N} dx \\
&= \la^{-N+1} \int_{\mathcal{W}_R} \frac{|u (y)|^N}{R^{N-\frac{N}{\la}} ((H^0(y)))^{\frac{N}{\la}} (\frac{1}{\la} \log \frac{R}{H^0(y)})^N} R^{N-\frac{N}{\la}} \frac{1}{\la} (H^0(y))^{\frac{N}{\la}-N} dy \\
&=\int_{\mathcal{W}_R} \frac{|u (y)|^N}{(H^0(y))^N (\log \frac{R}{H^0(y)})^N} dy.
\end{align*}
Hence the inequality (\ref{FH_N}) is invariant.

\section{Relation between the subcritical and the critical anisotropic Hardy inequalities}
\label{section:Relation}

In this section, 
according to \cite{Sano-Takahashi}, 
a relation between the critical and the subcritical anisotropic Hardy inequalities (\ref{FH_p}), (\ref{FH_N}) is presented. 
It will be shown  that the critical anisotropic Hardy inequality on a ball is embedded into a family of the subcritical anisotropic Hardy inequalities on the whole space 
by using a transformation which connects both inequalities.

\begin{theorem}
\label{Thm:H_p to H_N eq}
Let $m,N \in \N$ satisfy $N \geq 2$ and $m > N$, and $\mathcal{W}^N_R:=\{ y \in \re^N \,|\, H^0(y) <R \}$. Set
\begin{align*}
	I(u)&=\int_{\re^m} \left| \nabla u \cdot \frac{x}{H^0(x)} \right|^N \,dx - \( \frac{m-N}{N} \)^N \int_{\re^m} \frac{|u|^N}{H^0(x)^N} \,dx, \\
	J(w)&=\int_{\mathcal{W}^N_R} \left| \nabla w \cdot \frac{y}{H^0(y)} \right|^N \,dy \\
	&\hspace{5em}- \( \frac{N-1}{N} \)^N \int_{\mathcal{W}^N_R} \dfrac{|w|^N}{H^0(y)^N \( \log \frac{R}{H^0(y)} \)^N} \,dy.
\end{align*}
Then for any $w \in C^1_{H^0rad}(\mathcal{W}^N_R \setminus \{ 0\})$ (resp. $u \in C_{H^0rad}^1(\re^m \setminus \{ 0\})$),
there exists $u \in C_{H^0rad}^1(\re^m \setminus \{ 0\})$ (resp. $w \in C^1_{H^0rad}(\mathcal{W}^N_R \setminus \{ 0\})$) such that the equality
\begin{align}
\label{H_p to H_N eq}
	I(u)= \frac{\w_{m-1}}{\w_{N-1}} \( \frac{m-N}{N-1} \)^{N-1} J(u)
\end{align}
holds true where $\w_{N-1}= N \kappa_N$ and $\w_{m-1}= m \kappa_m$. 
\end{theorem}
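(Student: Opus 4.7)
The plan is to reduce each side of \eqref{H_p to H_N eq} to a one-dimensional integral via $H^0$-polar coordinates and to pass between them through a single explicit change of variable. For $u(x) = U(H^0(x)) \in C^1_{H^0rad}(\re^m \setminus \{0\})$, Proposition~\ref{Prop:identities}~(1') yields $\nabla u(x) \cdot \frac{x}{H^0(x)} = U'(H^0(x))$, and writing $x = r \omega$ with $r = H^0(x)$, $\omega \in \pd\mathcal{W}$, and $dx = r^{m-1}\,dr\,dS_\omega$ (as in the proof of Proposition~\ref{Prop:spherical}) leads to
\[
I(u) = \w_{m-1} \int_0^\infty \left[ |U'(r)|^N - \( \tfrac{m-N}{N} \)^N \frac{|U(r)|^N}{r^N} \right] r^{m-1} \, dr.
\]
The analogous computation on $\mathcal{W}^N_R$ for $w(y) = W(H^0(y))$ gives
\[
J(w) = \w_{N-1} \int_0^R \left[ |W'(s)|^N - \( \tfrac{N-1}{N} \)^N \frac{|W(s)|^N}{s^N (\log \tfrac{R}{s})^N} \right] s^{N-1} \, ds.
\]

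Set $\beta := \tfrac{m-N}{N-1} > 0$ and introduce the change of variable
\[
s = R \exp(-r^{-\beta}), \qquad \text{so that} \qquad \log \tfrac{R}{s} = r^{-\beta},
\]
which is a $C^\infty$ diffeomorphism from $(0, \infty)$ onto $(0, R)$. Under the identification $W(s) = U(r)$, a direct computation yields $\frac{ds}{dr} = s \beta r^{-\beta-1}$ and $W'(s) = U'(r)\, r^{\beta+1}/(s \beta)$. Using the identity $(N-1)\beta = m-N$ to collect powers of $r$, this gives
\[
|W'(s)|^N \, s^{N-1} \, ds = \beta^{1-N} \, |U'(r)|^N \, r^{m-1} \, dr,
\]
\[
\frac{|W(s)|^N \, s^{N-1}}{s^N (\log (R/s))^N} \, ds = \beta \, \frac{|U(r)|^N}{r^N} \, r^{m-1} \, dr.
\]
Substituting these into $J(w)$ and factoring out $\beta^{1-N}$, the two Hardy constants combine through the elementary identity $\beta^N \( \tfrac{N-1}{N} \)^N = \( \tfrac{m-N}{N} \)^N$ (which is just the definition of $\beta$), producing
\[
J(w) = \frac{\w_{N-1}}{\beta^{N-1}} \int_0^\infty \left[ |U'(r)|^N - \( \tfrac{m-N}{N} \)^N \frac{|U(r)|^N}{r^N} \right] r^{m-1} \, dr = \frac{\w_{N-1}}{\w_{m-1} \, \beta^{N-1}} \, I(u),
\]
which rearranges to exactly \eqref{H_p to H_N eq}. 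Because the map $r \leftrightarrow s$ is a diffeomorphism, starting from either a $C^1_{H^0rad}$ function on $\re^m \setminus \{0\}$ or on $\mathcal{W}^N_R \setminus \{0\}$ produces its counterpart on the other side via $W(s) = U((\log(R/s))^{-1/\beta})$.

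The only real obstacle is pinpointing the correct diffeomorphism $r \leftrightarrow s$. The ansatz $\log(R/s) = c \, r^{-\beta}$ is dictated by the presence of a logarithm in $J$ but not in $I$; matching the powers of $r$ in the two parts of $J(w)$ then forces $\beta = (m-N)/(N-1)$, and matching the Hardy constants fixes $c = 1$. After this determination, the remainder is a routine change-of-variable bookkeeping.
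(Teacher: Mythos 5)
Your proposal is correct and follows essentially the same route as the paper: both reduce $I$ and $J$ to one-dimensional integrals via $H^0$-polar coordinates and use the identical change of variable $\log\frac{R}{s}=r^{-\alpha}$ with $\alpha=\frac{m-N}{N-1}$, with the constants matching through $\alpha^N\left(\frac{N-1}{N}\right)^N=\left(\frac{m-N}{N}\right)^N$. The only quibble is your closing heuristic: the Hardy constants actually match for any $c$ in the ansatz $\log(R/s)=c\,r^{-\beta}$; it is the specific prefactor $\left(\frac{m-N}{N-1}\right)^{N-1}$ in \eqref{H_p to H_N eq} that forces $c=1$.
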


Before the proof, we define a transformation which connects the critical and the subcritical anisotropic Hardy inequalities according to \cite{Sano-Takahashi}.
Let $m, N$ be integers such that $m > N$ and let $R > 0$ be fixed.
For a given $r \in [0,+\infty)$ (resp. $s \in [0,R)$ ), define a new variable
$s \in [0, R)$ (resp. $r \in [0, +\infty)$ ) by the relation
\begin{equation}
\label{s:r}
	\( \log \frac{R}{s} \)^{\frac{N-1}{N}} = r^{-\frac{m-N}{N}},
\end{equation}
that is,
\begin{equation}
\label{s(r):r(s)}
	s = s(r) = R \exp (-r^{-\alpha}), \quad (\text{resp.} \; r = r(s) =  \( \log \frac{R}{s} \)^{-1/\alpha} )
\end{equation}
where
\begin{equation}
\label{alpha}
	\alpha = \frac{m-N}{N-1}.
\end{equation}
Note that 
the left-hand side of (\ref{s:r}) is the virtual extremal for (\ref{FH_N}) on $\mathcal{W}^N_R$
and the right-hand side of (\ref{s:r}) is the virtual extremal for (\ref{FH_p}) on the whole space $\re^m$ when $p = N < m$.
Easy computation shows that
\begin{equation}
\label{dsdr}
	\frac{ds}{dr} = \alpha s r^{-\alpha -1} > 0,
\end{equation}
so when $r$ varies from $0$ to $+\infty$ then $s$ varies from $0$ to $R$, and vice versa.

Let $r=H^0(x)$, $x \in \re^m$ and $s=H^0(y)$, $y \in \mathcal{W}^N_R$.
Now, for a given $u = u(r) \in C_{H^0rad}^1(\re^m \setminus \{ 0\})$ (resp. $w  = w(s) \in C_{H^0rad}^1(\mathcal{W}^N_R \setminus \{ 0\})$), 
define a new function 
$w  = w(s) \in C_{H^0rad}^1(\mathcal{W}^N_R \setminus \{ 0\})$ (resp.  $u = u(r) \in C_{H^0rad}^1(\re^m \setminus \{ 0\})$) 
by 
\begin{equation}
\label{w:u}
	w(s) = u(r),
\end{equation}
where variables $s$ and $r$ are related as in (\ref{s:r}).
Note that $\lim_{s \to R} w(s) =0$ is equivalent to $\lim_{r \to \infty} u(r) =0$.
Namely, under the transformation (\ref{w:u}), the boundary $\pd \mathcal{W}^N_R$ corresponds to the infinity point $\infty$ in $\re^m$.

\begin{proof}[{\bf Proof of Theorem \ref{Thm:H_p to H_N eq}}]
Define $u$ and $w$ as in \eqref{w:u}.
Then we obtain
\begin{align*}
	\int_{\re^m} \left| \nabla u \cdot \frac{x}{H^0(x)} \right|^N dx &= \w_{m-1} \int_{0}^{\infty} |u^{\prime}(r)|^{N} r^{m-1} dr \\
	&= \w_{m-1} \int_{0}^{R} \left| w^{\prime}(s) \frac{ds}{dr} \right|^{N} r(s)^{m-1} \frac{dr}{ds} \,ds \\
	&=\w_{m-1} \int_{0}^{R} |w^{\prime}(s)|^N \( \alpha s r(s)^{-\alpha-1} \)^{N-1} r(s)^{m-1} \,ds \\ 
	&=\w_{m-1} \alpha^{N-1} \int_{0}^{R} |w^{\prime}(s)|^N s^{N-1} r(s)^{m-1 - (\alpha+1)(N-1)} \,ds \\ 
	&= \frac{\w_{m-1}}{\w_{N-1}} \alpha^{N-1} \int_{0}^{R} |w^{\prime}(s)|^N s^{N-1} \,ds \\
	&= \frac{\w_{m-1}}{\w_{N-1}} \alpha^{N-1} \int_{\mathcal{W}^N_R} \left| \nabla w \cdot \frac{y}{H^0(y)} \right|^N \,dy, 
\end{align*}
here we have used (\ref{dsdr}) and $m-1 - (\alpha +1)(N-1) = 0$ by (\ref{alpha}).

On the other hand, we have
\begin{align*}
	\int_{\re^m} \dfrac{|u(x)|^N}{H^0(x)^N} dx &= \w_{m-1} \int_{0}^{\infty} |u(r)|^N r^{m-N-1} \,dr \\
	&= \w_{m-1} \int_{0}^{R} |w(s)|^N r(s)^{m-N-1} \,\frac{dr}{ds} \,ds \\
	&= \w_{m-1} \int_{0}^{R} |w(s)|^N r(s)^{m-N-1} \alpha^{-1} s^{-1} r(s)^{\alpha+1}  \,ds \\
	&= \frac{\w_{m-1}}{\alpha} \int_{0}^{R} \frac{|w(s)|^N}{s} r(s)^{m-N+\alpha} \,ds \\
	&= \frac{\w_{m-1}}{\alpha} \int_{0}^{R} \frac{|w(s)|^N}{s (\log \frac{R}{s} )^{N}} \,ds \\
	&= \frac{\w_{m-1}}{\alpha \w_{N-1}} \int_{\mathcal{W}^N_R} \dfrac{|w(y)|^N}{H^0(y)^N \( \log \frac{R}{H^0(y)} \)^N} \,dy,
\end{align*}
since $r(s)^{m-N+\alpha} = (\log \frac{R}{s} )^{-N}$ by (\ref{s:r}) and (\ref{alpha}).

By combining these identities, we obtain Theorem \ref{Thm:H_p to H_N eq}.
\end{proof}

Lastly we show that the transformation preserves the scale invariance structures of the subcritical and the critical anisotropic Hardy inequalities.

\begin{proposition}
\label{Prop:scale}
Let $m, N$ be integers such that $m > N$.
For functions $u = u(r)$, $r \in [0, +\infty)$ and $w = w(s)$, $s \in [0,R)$,
define the scaled functions
\begin{align*}
	&u_{\mu} (r) = \mu^{\frac{m-N}{N}} u ( \mu r ), \\ 
	&w^{\la} (s) = \la^{-\frac{N-1}{N}} w (R^{1-\la} s^{\la})
\end{align*}
for $\mu, \la >0$.
Then we have 
\begin{align*}
	&w^{\la}(s(r)) = u_{\mu}(r), \quad \text{where} \; \mu = \la^{-1/\alpha}, \\
	&u_{\mu}(r(s)) = w^{\la}(s), \quad \text{where} \; \la = \mu^{-\alpha},
\end{align*}
where $s = s(r)$ and $r = r(s)$ are as in (\ref{s(r):r(s)}) and  $\alpha$ is defined in (\ref{alpha}).
\end{proposition}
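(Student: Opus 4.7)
The plan is to verify both identities by direct substitution, exploiting the explicit form of the change of variables $s = s(r) = R\exp(-r^{-\alpha})$ and $r = r(s) = (\log(R/s))^{-1/\alpha}$ from \eqref{s(r):r(s)}, together with the defining relation $w(s) = u(r)$ from \eqref{w:u}. The key observation is that the one-parameter family of dilations $r \mapsto \mu r$ on the $r$-variable corresponds, under the transformation $s = s(r)$, to the logarithmic rescaling $s \mapsto R^{1-\lambda} s^\lambda$ on the $s$-variable, with $\lambda$ and $\mu$ linked by $\lambda = \mu^{-\alpha}$.

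Concretely, to prove the first identity I would compute
\[
  w^{\lambda}(s(r)) = \lambda^{-\frac{N-1}{N}}\, w\bigl(R^{1-\lambda}\, s(r)^{\lambda}\bigr)
\]
and simplify the argument:
\[
  R^{1-\lambda}\, s(r)^{\lambda} = R^{1-\lambda}\, R^{\lambda} \exp(-\lambda r^{-\alpha}) = R\exp\bigl(-(\mu r)^{-\alpha}\bigr) = s(\mu r),
\]
provided $\mu^{-\alpha} = \lambda$, i.e.\ $\mu = \lambda^{-1/\alpha}$. Then by \eqref{w:u},
\[
  w\bigl(s(\mu r)\bigr) = u(\mu r),
\]
so $w^{\lambda}(s(r)) = \lambda^{-\frac{N-1}{N}} u(\mu r)$. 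It remains to match the prefactor with $u_\mu(r) = \mu^{\frac{m-N}{N}} u(\mu r)$; this reduces to the identity $\lambda^{-\frac{N-1}{N}} = \mu^{\frac{m-N}{N}}$, which follows from $\mu = \lambda^{-1/\alpha}$ and the defining relation $\alpha = \frac{m-N}{N-1}$ in \eqref{alpha} via
\[
  \mu^{\frac{m-N}{N}} = \lambda^{-\frac{m-N}{N\alpha}} = \lambda^{-\frac{N-1}{N}}.
\]

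The second identity is obtained by the same computation read backwards: starting from $u_{\mu}(r(s)) = \mu^{\frac{m-N}{N}} u(\mu r(s))$, use $r(s)^{-\alpha} = \log(R/s)$ to write $\mu r(s) = r(R^{1-\lambda} s^{\lambda})$ with $\lambda = \mu^{-\alpha}$, then apply \eqref{w:u} in the form $u(r) = w(s(r))$, and finally check $\mu^{\frac{m-N}{N}} = \lambda^{-\frac{N-1}{N}}$ as above. There is essentially no obstacle of substance here: the entire content is the algebraic compatibility between the two scaling actions and the single exponent identity $\alpha = \frac{m-N}{N-1}$; the only mild care needed is bookkeeping of the exponents to confirm that the factors $\lambda^{-\frac{N-1}{N}}$ and $\mu^{\frac{m-N}{N}}$ match under $\lambda = \mu^{-\alpha}$.
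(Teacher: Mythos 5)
Your proof is correct and follows essentially the same route as the paper: both verify $R^{1-\la}s(r)^{\la}=s(\mu r)$ for $\mu=\la^{-1/\alpha}$, apply the relation $w(s(\cdot))=u(\cdot)$, and match the prefactors via $\la^{-\frac{N-1}{N}}=\mu^{\frac{m-N}{N}}$ using $\alpha=\frac{m-N}{N-1}$. The paper likewise dispatches the second identity as ``similar,'' so there is nothing substantive to add.
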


\begin{proof}
By direct calculation,
\begin{align*}
	R^{1-\la} s(r)^{\la} = R^{1-\la} \( R \exp (-r^{-\alpha} ) \)^{\la} &= R \exp \( -\la r^{-\alpha} \) \\
	&= R \exp \( \( - (\la^{-1/\alpha} r) \)^{-\alpha} \) = s( \mu r ),
\end{align*}
where $\mu = \la^{-1/\alpha}$. 
Therefore we obtain
\begin{align*}
	w^{\la}(s(r)) &= \la^{-\frac{N-1}{N}} w(R^{1-\la} s(r)^{\la}) = \( \la^{-1/\alpha} \)^{\frac{m-N}{N}} w(s(\mu r)) \\ 
	&= \mu^{\frac{m-N}{N}}  u (\mu r) = u_{\mu}(r).
\end{align*}
The proof of $u_{\mu}(r(s)) = w^{\la}(s)$ for $\la = \mu^{-\alpha}$, is similar.
\end{proof}

%
%

\vspace{1em}\noindent
{\bf Acknowledgments.}

The authors of this paper thank to Prof. M. Ruzhansky for his kind comments and useful information.

A part of this work was done while the second (M.S.) and the third author (F.T.)  visited Universit\`a di Napoli Federico II, Dipartimento di Matematica e Applicazioni `R. Caccioppoli''in September, 2017.
They thank the warm hospitality of the department. 

The third author (F.T.) was supported by JSPS Grant-in-Aid for Scientific Research (B), No.15H03631.



\begin{thebibliography}{99}

\bibitem{ACP}
B.Abdellaoui, E.Colorado,  I. Peral: 
{\em Some improved Caffarelli-Kohn-Nirenberg inequalities}, 
\newblock Calc. Var. Partial Differential Equations 23 (2005), no. 3, 327-345. 

\bibitem{AIS}
G. Akagi, K. Ishige, and R. Sato:
{\em The Cauchy problem for the Finsler heat equation},
\newblock arXiv:1710.00456v1 [math.AP].

\bibitem{ABCMP1}
A. Alvino, F. Brock, F. Chiacchio, A. Mercaldo, M.R. Posteraro:
{\em Some isoperimetric inequalities on $\mathbb R^N$ with respect to weights $|x|^{\alpha}$}, 
\newblock J. Math. Anal. Appl. {\bf 451}, no. 1,  (2017), 280--318.

\bibitem{AFTL} 
A. Alvino, V. Ferone, G. Trombetti, and P. L. Lions: 
{\it Convex symmetrization and applications,}  
\newblock Ann. Inst. H. Poincar\'e Anal. Non Lin\'eaire {\bf 14} (1997), no. 2, 275--293.

\bibitem{AFT} 
A. Alvino, V. Ferone, G. Trombetti: 
{\it On the best constant in a Hardy-Sobolev inequality},
\newblock Appl. Anal. {\bf 85} (2006), no. 1-3, 171--180. 



\bibitem {BCS(book)} 
D. Bao, S.-S. Chern and Z. Shen: 
\newblock ``An introduction to Riemann-Finsler geometry", 
\newblock  Springer-Verlag, New York, 2000. xx+431 pp. 

\bibitem{Bal}
K. Bal:
{\it Hardy inequalities for Finsler $p$-Laplacian in the exterior domain,}
\newblock Mediterranean J. Math. {\bf 14}, 165 (2017), 12 pages.

\bibitem{BGM}
K. Bal, P. Garain, and I. Mandal:
{\it Some qualitative properties of Finsler $p$-Laplacian,}
\newblock Indag. Math. (N.S.)  {\bf 28} (2017),  no. 6, 1258--1264. 

\bibitem{BFT1}
G. Barbatis, S. Filippas, and A. Tertikas:
{\it A unified approach to improved $L^p$ Hardy inequalities with best constants},
\newblock Trans. Amer. Math. Soc. {\bf 356}  (2003), no. 6, 2169-2196.

\bibitem{BFT2}
G. Barbatis, S. Filippas, and A. Tertikas:
{\it  Series expansion for $L^p$  Hardy inequalities},
\newblock Indiana Univ. Math. J.  {\bf 52}  (2003),  no. 1, 171-190.


\bibitem{Bellettini-Paolini}
G. Bellettini, and M. Paolini:
{\em Anisotropic motion by mean curvature in the context of Finsler geometry,}
\newblock Hokkaido Math. J. {\bf 25} (1996), 537--566.

\bibitem{Belloni-Ferone-Kawohl}
M. Belloni, V. Ferone, and B. Kawohl:
{\em Isoperimetric inequalities, Wulff shape and related questions for strongly nonlinear elliptic operators,} 
\newblock ZAMP. {\bf 54} (2003), 771--783.

\bibitem{BCS} 
C. Bianchini, G. Ciraolo and P. Salani:
{\em An overdetermined problem for the anisotropic capacity}, 
\newblock Calc. Var. Partial Differential Equations {\bf 55} (2016), Art. 84, 24 pp.

\bibitem{Brasco-Franzina}
L. Brasco, and G. Franzina:
{\em Convexity properties of Dirichlet integrals and Picone-type inequalities,}
\newblock Kodai Math. J. {\bf 37} (2014), 769--799.

\bibitem{Brezis-Marcus}
H. Brezis, and M. Marcus:
{\em Hardy's inequalities revisited,}
\newblock Ann. Scuola Norm. Sup. Pisa Cl. Sci. (4), {\bf 25} (1997), no.1-2, 217--237.

\bibitem{Cianci-Salani}
A. Cianchi and P. Salani:
{\em  Overdetermined anisotropic elliptic problems}, 
\newblock Math. Ann. {\bf 345} (2009), 859--881.


\bibitem{Della Pietra-Blasio-Gavitone}
F. Della Pietra, G. di Blasio, and N. Gavitone:
{\em Anisotropic Hardy inequalities,}
\newblock Proc. Roy. Soc. Edinburgh Sect. A, 148A,
DOI:10.1017/S0308210517000336

\bibitem{Della Pietra-Blasio}
F. Della Pietra,  and G. di Blasio:
{\em Blow-up solutions for some nonlinear elliptic equations involving a Finsler-Laplacian}, 
\newblock Publ. Mat. {\bf 61} (2017), 213--238

\bibitem{Ferone-Kawohl}
V. Ferone, and B. Kawohl:
{\em Remarks on a Finsler-Laplacian}, 
\newblock Proc. Amer. Math. Soc. {\bf 137} (2009), no.1, 247--253.

\bibitem{FTT(JEMS)}
S. Filippas, A. Tertikas, and J. Tidblom:
{\em On the structure of Hardy-Sobolev-Maz'ya inequalities,}
\newblock  J. Eur. Math. Soc. (JEMS)  11  (2009),  no. 6, 1165--1185.


\bibitem{Ioku-Ishiwata}
N. Ioku, and N., M. Ishiwata:
{\em A scale invariant form of a critical Hardy inequality,}
\newblock Int. Math. Res. Not. (2015), no. 18, 8830--8846.

\bibitem{Kombe-Ozaydin(2009)}
I. Kombe, and M. \"Ozaydin:
{\em Improved Hardy and Rellich inequalities on Riemannian manifolds,}
\newblock Trans. Amer. Math. Soc. (2009), no. 12, 6191--6203.

\bibitem{Kombe-Ozaydin(2013)}
I. Kombe, and M. \"Ozaydin:
{\em Hardy-Poincar\'e, Rellich and uncertainty principle inequalities on Riemannian manifolds,}
\newblock Trans. Amer. Math. Soc. (2013), no. 10, 5035--5050.

\bibitem{MMP}
M. Marcus, V. J. Mizel, and Y. Pinchover:
{\em On the best constant for Hardy's inequality in $\mathbb {R}^n$,}
\newblock Trans. Amer. Math. Soc. {\bf 350} (1998), 3237--3255.

\bibitem{MS(NA)}
T. Matskewich, and P. E. Sobolevskii:
{\em  The best possible constant in generalized Hardy's inequality for convex domain in $\mathbb{R}^N$},
\newblock Nonlinear Anal. {\bf 28},  (1997),  no. 9, 1601--1610.

\bibitem{ORS(arXiv2016)}
T. Ozawa, M. Ruzhansky, and D.Suragan:
{\em $L^p$-Caffarelli-Kohn-Nirenberg type inequalities on homogeneous groups,}
\newblock arXiv:1605.02520

\bibitem{RS(arXiv2016)} 
M. Ruzhansky, and D. Suragan:
{\em Critical Hardy inequalities,}
\newblock arXiv:1602.04809

\bibitem{RS(Adv.Math)} 
M. Ruzhansky, and D. Suragan:
{\em Hardy and Rellich inequalities, identities, and sharp remainders on homogeneous groups,}
\newblock Adv. Math. {\bf 317}  (2017), 799--822. 

\bibitem{RS(CCM)}
M. Ruzhansky, and D. Suragan:
{\em Anisotropic $L^2$-weighted Hardy and $L^2$-Caffarelli-Kohn-Nirenberg inequalities,}
\newblock Commun. Contemp. Math.  {\bf 19}  (2017), no. 6, 1750014, 12 pp.

\bibitem{RSY(TAMS)}
M. Ruzhansky, D. Suragan, and N. Yessirkegenov:
{\em Extended Caffarelli-Kohn-Nirenberg inequalities, and remainders, stability, and superweights for $L^p$-weighted Hardy inequalities,}
\newblock Trans. Amer. Math. Soc. Ser. B 5  (2018), 32--62.

\bibitem{RSY(IEOT)}
M. Ruzhansky, D. Suragan, and N. Yessirkegenov:
{\em Sobolev type inequalities, Euler-Hilbert-Sobolev and Sobolev-Lorentz-Zygmund spaces on homogeneous groups,}
\newblock Integral Equations Operator Theory {\bf 90} (2018), no. 1, Art.10, 33 pp


\bibitem{Sano-TF(EJDE)}
M. Sano, and F. Takahashi: 
{\em Sublinear eigenvalue problems with singular weights related to the critical Hardy inequality,}
\newblock Electron. J. Differential Equations {\bf 2016}, Paper No. 212, 12 pp.

\bibitem{Sano-Takahashi}
M. Sano, and F. Takahashi: 
{\em Scale invariance structures of the critical and the subcritical Hardy inequalities and their improvements,}  
\newblock Calc. Var. Partial Differential Equations {\bf 56} (2017), no. 3, Paper No. 69, 14 pp.


\bibitem{Sano-TF(DIE)}
M. Sano, and F. Takahashi: 
{\em Some improvements for a class of the Caffarelli-Kohn-Nirenberg inequalities,}
\newblock Differential Integral Equations {\bf 31},  (2018),  no. 1-2, 57--74.

\bibitem{TF}
F. Takahashi:
{\em A simple proof of Hardy's inequality in a limiting case},
\newblock Arch. Math. {\bf 104} (2015), 77--82.

\bibitem{Tidblom(JFA)}
J. Tidblom:
{\em A Hardy inequality in the half-space},
\newblock J. Funct. Anal. {\bf 221} (2005),  no. 2, 482--495.

\bibitem{Tidblom(PAMS)}
J. Tidblom:
{\em A geometrical version of Hardy's inequality for $\stackrel{\circ}{W}^{1,p}(\Omega)$},
\newblock Proc. Amer. Math. Soc.  132  (2004),  no. 8, 2265--2271.

\bibitem{Van Schaftingen}
J. Van Schaftingen:
{\em Anisotropic symmetrization,}
\newblock  Ann. Inst. H. Poincar\'e Anal. Non Lin\'eaire  {\bf 23} (2006),  no. 4, 539--565. 

\end{thebibliography}
\end{document}